\pgfplotsset{compat=1.10}
\tikzstyle{b_vertex}=[circle,fill=black!100,text=white,inner sep=0.8mm,draw]
\tikzstyle{w_vertex}=[circle,fill=white!100,text=white,inner sep=0.4mm,draw]
\tikzstyle{point}=[circle,fill=white!1,text=white,inner sep=0.4mm,draw]
\tikzstyle{point2}=[circle,fill=black,inner sep=0.3mm]
\tikzstyle{point}=[circle,fill=black,inner sep=0.1mm] %why is this here?
\tikzstyle{path_edge}=[thick]
\def\up{\mathcal{\varUpsilon}}
\title{Classes of graphs without star forests and related graphs}
\author{Aistis Atminas\thanks{Department of Mathematics, London School of Economics, London, WC2A 2AE, United Kingdom, A.Atminas@lse.ac.uk} }
\date{November 8, 2017}
\newcommand{\X}{\mathcal{X}}
\newcommand{\Y}{\mathcal{Y}}
\newcommand{\F}{\mathcal{F}}
\theoremstyle:=definition,remark,plain\do{%
        \expandafter\g@addto@macro\csname th@\theoremstyle\endcsname{%
            \addtolength\thm@preskip\parskip
            }%
        }
\theoremstyle{plain}
\newtheorem{theorem}{Theorem}[section]
\newtheorem{lemma}[theorem]{Lemma}
\newtheorem{definition}[theorem]{Definition}
\newtheorem{corollary}[theorem]{Corollary}
\newtheorem{observation}[theorem]{Observation}
\begin{document}
\maketitle
\begin{abstract}
This work provides a structural characterisation of hereditary graph classes that do not contain a star forest, several graphs obtained from star forests by subset complementation, a union of cliques, and the complement of a union of cliques as induced subgraphs. This provides, for instance, structural results for graph classes not containing a matching and several complements of a matching. In terms of the speed of hereditary graph classes, our results imply that all such classes have at most factorial speed of growth.
\end{abstract}

\section{Introduction}

A \emph{graph class} is a set of graphs closed under isomorphism. A graph class is \emph{hereditary} if it is closed under taking induced subgraphs.  It is well-known (and can be easily seen) that a graph property $\X$ is hereditary if and only if $\X$ can be
described in terms of forbidden induced subgraphs. More formally, for a set $\F$ of graphs
we write $Free(\F)$ for the class of graphs containing no induced subgraph isomorphic to any graph in the set $\F$. 
A graph class $\X$ is hereditary if and only if $\X = Free(\F)$ for some set $\F$.
We call $\F$ a set of \emph{forbidden induced subgraphs} for $\X$ and say that graphs in $X$ are $\F$-\emph{free}.

One of the systematic ways of exploring structural properties of graph classes is by looking at the asymptotic growth of the number of graphs it contains. More formally, given a class $\X$, we write $\X_n$ for the number of graphs in $\X$ with vertex set $\{1, 2, \ldots  , n\}$ and call this sequence \emph{the speed of hereditary class} $\X$. The possible structures and speeds of a hereditary or monotone property of graphs have been extensively studied, originally in the special case where a single subgraph is forbidden \cite{one1, one2, one3, one4, one5, one6}, and more recently in general \cite{general1, general2, general3, general4, bbw05, general5}.  For example, Erd\H{o}s, Kleitman and Rothschild \cite{one2} and Kolaitis, Pr\"omel and Rothschild \cite{one3} studied $K_r$-free graphs, Erd\H{o}s, Frankl
and R\"odl \cite{one1} studied monotone properties when a single graph is forbidden, and Pr\"omel
and Steger \cite{one4, one5} obtained very precise results on the structure of almost all (induced-)$C_4$-free and $C_5$-free graphs. In a more general setting, precise structural results were obtained for the classes with lower speeds of growth: constant, polynomial, exponential and factorial. One structural result of our interest is given by Balogh, Bollob\'as and Weinreich  \cite{bbw05}. The result provides us with induced forbidden characterisation of the classes $\X$ for which there exist numbers $S=S(\X)$ and $d=d(\X)$ such that the vertices of any graph can be partitioned into $S$ parts which are cliques or independent sets and between the parts the graph has either degree bounded by $d$ or co-degree bounded by $d$. All the classes with constant, polynomial, exponential, factorial speed below Bell number and some classes of factorial speed above Bell number, namely those that have finite distinguishing number, admit this partition. In this work, we propose a similar partition result, that provides with structural description of a more general family of classes from the factorial layer. %We provide a structural description of the classes of graphs that forbid star forests and related graphs (graphs shown in Figure~\ref{fig:stars}) together with union of cliques and complement of union of cliques.   

To state our results, we will first introduce some notation. We denote by $K_k$ a clique on $k$ vertices and by $K_{1,k}$ a star with $k$ leaves. Let $G^1_{n,k}=nK_{1,k}$ be a disjoint union of $n$ stars and let $G^2_{n,k}, G^3_{n,k}, G^4_{n,k}$ be the graphs obtained from $nK_{1,k}$ by adding a clique on centres, leaves or on each of centres and leaves of the stars, respectively. Now let $H^i_{n,k}=\overline{G^i_{n,k}}$ be the complement of $G^i_{n,k}$ for each $i=1,2,3,4$. We will denote the family of these graphs as $\F_{n,k}=\{G_{n,k}^1, G_{n,k}^2, G_{n,k}^3, G_{n,k}^4, H_{n,k}^1, H_{n,k}^2, H_{n,k}^3, H_{n,k}^4\}$ (see Figure~\ref{fig:stars}). We will denote a bipartite graph by $G=(A, B, \mathcal{E})$, where the vertex set is $A \cup B$, with $A$ called top part and $B$ bottom part of the bipartition, and the edge set is $\mathcal{E} \subseteq A \times B$. When talking about induced subgraph containment between bipartite graphs we will require the embedding to respect the bipartition, i.e. the top and bottom parts are required to embed into top and bottom parts, respectively. We will use the following notation for the two bipartitions of $K_{1,k}$: $\Lambda_k=(\{a\}, B, \{a\} \times B)$ for $|B|=k$ and $\up_k=(A, \{b\}, A \times \{b\})$ for $|A|=k$. In this work our main result is as follows:

\begin{theorem}~\label{main}
Let $G \in Free(\F_{n,k} \cup \{nK_l, \overline{nK_l}\})$. Then there is a constant $T(n,k,l)$ such that the vertex set $V(G)$ can be partitioned into $t \leq T$ parts $V(G)=V_1 \cup V_2 \cup \ldots \cup V_t$ such that:
\begin{itemize}
\item $G[V_i]$ induces a clique or an independent set for each $1 \leq i \leq t$,
\item The bipartite graph induced between the parts $V_i$ and $V_j$, is 
$(2 \Lambda_{2k-1}, 2 \up_{2k-1})$-free for each pair $1 \leq i,j \leq t$, $i \neq j$, . 
\end{itemize}
\end{theorem}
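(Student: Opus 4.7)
The plan is to build the partition in two stages. The first stage uses the $\{nK_l, \overline{nK_l}\}$-free hypothesis together with Ramsey's theorem to produce a coarse partition of $V(G)$ into cliques and independent sets: iteratively, while the remnant has at least $R(l,l)$ vertices, Ramsey supplies a homogeneous set of size $l$ which is extracted as a new part, and the small final remnant is split into singletons. Since at most $n-1$ pairwise vertex-disjoint copies of $K_l$ (respectively $\overline{K_l}$) can be extracted before producing the forbidden $nK_l$ (respectively $\overline{nK_l}$), a bookkeeping argument caps the number of peeling steps at some $T_0 = T_0(n,l)$.

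The second stage refines this coarse partition using $\F_{n,k}$-freeness. Let $V_1, \ldots, V_{T_0}$ be the coarse parts. For each ordered pair $(V_i, V_j)$ introduce the neighborhood-inclusion poset $\preceq_{ij}$ on $V_i$ by $a \preceq_{ij} b \iff N(a) \cap V_j \subseteq N(b) \cap V_j$. Along any chain of $\preceq_{ij}$ the $V_j$-neighborhoods are nested, so the bipartite graph between the chain and $V_j$ is automatically $2\Lambda_{2k-1}$-free (with top in $V_i$). By Dilworth's theorem it then suffices to bound the width of $\preceq_{ij}$ by some $M_0 = M_0(n,k)$. Symmetrically, each $V_j$ is chain-decomposed via the $V_i$-neighborhood poset to handle $2\up_{2k-1}$. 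Taking the common refinement across the $T_0-1$ chain decompositions applied to each $V_i$ yields at most $M_0^{T_0-1}$ sub-classes per coarse part, and within any pair $(V_i', V_j')$ of sub-classes the bipartite graph has nested neighborhoods on both sides, hence is $(2\Lambda_{2k-1}, 2\up_{2k-1})$-free; the total number of parts is at most $T_0 \cdot M_0^{T_0-1}$, giving the required $T(n,k,l)$.

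The crux is the antichain bound. Given an antichain $a_1, \ldots, a_M$ in $\preceq_{ij}$, a preliminary Ramsey-style pruning lets me assume each pair has $V_j$-private parts of size at least $2k-1$ on both sides. Applying the Erd\H{o}s--Rado sunflower lemma (after bucketing by approximate neighborhood size so that the relevant sets are comparable) to the neighborhoods $N(a_r) \cap V_j$ produces $n$ vertices with a common core and pairwise disjoint petals of size at least $k$. These $n$ centers together with their petal vertices realize an induced copy of $G^s_{n,k}$, where $s \in \{1,2,3,4\}$ is determined by whether $V_i$ and $V_j$ are cliques or independent sets, contradicting $\F_{n,k}$-freeness. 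When the neighborhoods are instead very large so that the complements $V_j \setminus N(a_r)$ are small, the same sunflower argument applied to the complements produces an induced $H^s_{n,k}$; either way one contradicts the hypothesis, establishing the bound $M_0$.

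The main obstacle will be this last extraction step. A pairwise lower bound on private neighborhood sizes allows arbitrary overlap across many sets, so strictly private parts can a priori be empty; the sunflower lemma is the mechanism for bridging this gap, but it requires control over the raw sizes of the neighborhoods (hence the bucketing), and it must cover both the sparse and dense regimes. This is precisely why the forbidden family $\F_{n,k}$ must include the complements $H^s$ as well as the $G^s$: without the $H^s$, the dense-neighborhood regime of the extraction would fail, and the antichain bound would collapse.
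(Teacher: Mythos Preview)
Both stages have genuine gaps.

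\textbf{Stage 1.} Extracting $n$ pairwise vertex-disjoint copies of $K_l$ does not produce an \emph{induced} $nK_l$, since there may be edges between the copies; the same applies to $\overline{K_l}$'s. Take $G=K_N$: for $n\ge 2$, $k\ge 1$, $l\ge 2$, every graph in $\F_{n,k}\cup\{nK_l,\overline{nK_l}\}$ contains a non-edge, so $K_N$ lies in the class, yet your procedure peels off $\lfloor N/l\rfloor$ cliques and never terminates in a bounded number of steps. Bounded cochromatic number here is not elementary: the paper obtains it by combining the Chudnovsky--Seymour $p$-split theorem for $Free(nK_l,\overline{nK_l})$ with Kierstead--Penrice $\chi$-boundedness of $Free(nK_{1,k})$, and this genuinely requires the $\F_{n,k}$ hypothesis --- triangle-free graphs of high girth and high chromatic number witness that $\{nK_l,\overline{nK_l}\}$-freeness alone (for $l\ge 3$) does not bound the cochromatic number.

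\textbf{Stage 2.} The width of $\preceq_{ij}$ is not bounded by any function of $n,k$, so Dilworth cannot deliver a bounded chain decomposition. For $k\ge 2$ and $l\ge 3$, let $G=MK_2$: every graph in $\F_{n,k}$ has a vertex of degree at least~$2$, while $nK_l$ and $\overline{nK_l}$ contain $K_3$ and $C_4$ respectively, so $G$ is in the class for all $M$. With the obvious bipartition into two independent sets, the $V_j$-neighbourhoods of vertices in $V_i$ are pairwise incomparable singletons --- an antichain of size $M$ --- and no ``Ramsey-style pruning'' can manufacture pairwise private parts of size $\ge 2k-1\ge 3$ out of singletons, so your sunflower extraction never begins. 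The structural reason your plan overshoots is that chains of $\preceq_{ij}$ force \emph{nested} neighbourhoods, hence $2K_2$-free bipartite graphs between parts; that is strictly stronger than the $(2\Lambda_{2k-1},2\up_{2k-1})$-freeness the theorem asserts, and the matching shows the stronger conclusion is false. The paper's bipartite step instead builds a ``$d$-template'' (a linear arrangement of bags with $d$-joins/$d$-co-joins between non-consecutive bags), refines consecutive bags via covering lemmas so that one forbidden star drops out, and runs an induction on the total star count.
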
 

We note that the fact that this class has bounded cochromatic number, i.e. can be partitioned into bounded number of cliques and independent sets, follows readily from the works~\cite{cs14, kp94}. Therefore, the focus of this project will be to describe what happens between the parts. The condition for bipartite graph being $(2 \Lambda_{2k-1}, 2 \up_{2k-1})$-free as described in the thoerem above is equivalent to saying that for any two vertices from one part of the bipartition, the neighbourhood of one vertex contains all but at most $2k-2$ neighbours of the other.

\begin{figure}[H]
	%\centering
	\begin{tikzpicture}[scale=.6,auto=left]
	% S_{3,3,3}
	\foreach \start in {0, 7, 14, 21}
	{
	\fill[black!40!white] (2.25+\start, -0.1) ellipse (3.2cm and 0.3cm);
		
	%%%%%%%%longest%%%%%%%%%
	\foreach \start in {0,7,14,21}
{
}

		%\foreach \last in {-0.4+1.5, 0+1.5, 0.4+1.5}
		%{
		%\foreach \first in {0}
		%{ 
		%\draw (\start+\first, 0) arc (270-90/(3+\last):270+90/(3+\last):3+\last);
		%\draw (\start+\first,0) arc (240:300:4.5+\last-\first);
		%\draw (\start+\first,0) arc (240:300:3+\last-\first);
		%\draw (\start+\first,0) arc (255:285:3+2*\last-2*\first);
		%\draw (\start+\first+1.5,0) arc (240:300:3+\last-\first);
		%\draw (\start+\first+1.5,0) arc (255:285:3+2*\last-2*\first);
		%\draw (\start+\first+3,0) arc (255:285:3+2*\last-2*\first);	
		%}
		%}

		%\draw (\start+1.5,0) arc (240:300:3);	    			
		%\draw (\start+1.5,0) arc (255:285:3);
		%\draw (\start+3,0) arc (255:285:3);
	}

	\foreach \start in {14, 21}
	{
	\foreach \height in {0,4}
	{
	\fill[black!30!white] (2.25+\start, 2.6+\height) ellipse (3cm and 0.3cm);
	\iffalse
	\draw (\start+1.5,2.5+\height) arc (120:60:3);	    	
	\draw (\start+0,2.5+\height) arc (120:60:3);
	\draw (\start+0,2.5+\height) arc (120:60:4.5);
	\draw (\start+0,2.5+\height) arc (105:75:3);
	\draw (\start+1.5,2.5+\height) arc (105:75:3);
	\draw (\start+3,2.5+\height) arc (105:75:3);
	\fi
	}
	}

	\foreach \height in {0, 4}
	{
		\foreach \start in {0, 14}
		{
		\node[w_vertex] (1) at (\start+0, 2.5+\height) { }; 	
		\node[w_vertex] (2) at (\start+1.5, 2.5+\height) { };
		\node[w_vertex] (3) at (\start+3, 2.5+\height) { };
		\node[w_vertex] (4) at (\start+4.5, 2.5+\height) { };
		\foreach \leg in {-0.4, 0 ,0.4}
			{		
			\node[w_vertex] (5) at (\start+0+\leg,0+\height) { };
			\node[w_vertex] (6) at (\start+1.5+\leg,0+\height) { };
			\node[w_vertex] (7) at (\start+3+\leg,0+\height) { };
			\node[w_vertex] (8) at (\start+4.5+\leg,0+\height) { };				

			\foreach \from/\to in {1/5,2/6,3/7,4/8}
	    		\draw (\from) -- (\to);
			}
		}
	
		\foreach \start in {7, 21}
		{
		\node[w_vertex] (1) at (\start+0, 2.5+\height) { }; 	
		\node[w_vertex] (2) at (\start+1.5, 2.5+\height) { };
		\node[w_vertex] (3) at (\start+3, 2.5+\height) { };
		\node[w_vertex] (4) at (\start+4.5, 2.5+\height) { };
		\foreach \leg in {-0.4, 0 ,0.4}
			{		
			\node[w_vertex] (5) at (\start+0+\leg,0+\height) { };
			\node[w_vertex] (6) at (\start+1.5+\leg,0+\height) { };
			\node[w_vertex] (7) at (\start+3+\leg,0+\height) { };
			\node[w_vertex] (8) at (\start+4.5+\leg,0+\height) { };				

			\foreach \from/\to in {1/6,1/7,1/8, 2/5, 2/7, 2/8, 3/5, 3/6, 3/8, 4/5, 4/6, 4/7}
	    		\draw (\from) -- (\to);
			}
		}
	}

	\end{tikzpicture}
	%\includestandalone[width=\textwidth]{stars}
	\caption{Family $\F_{4,3}$ (vertices in shaded regions form cliques)}
	\label{fig:stars}
\end{figure}
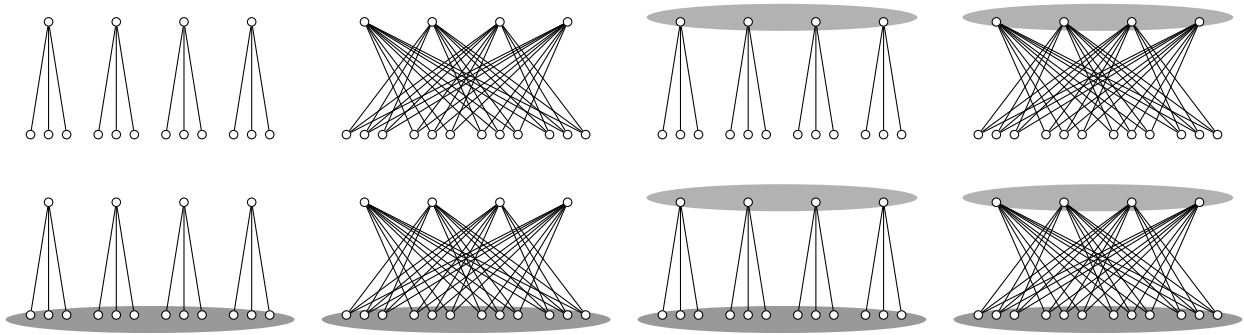

An interesting special case of the theorem above is when $k=1$, in which case the family $\F_{n,1}$ consists of a matching and related graphs (see Figure~\ref{fig:matching}). Noting that forbidding matchings also excludes unions of cliques we have the following special case of the main theorem: 

\begin{theorem}\label{thm:matchings}
Let $G \in Free(\F_{n,1})$, then there is a constant $T(n)$ such that the vertex set $V(G)$ can be partitioned into $t \leq T$ parts $V(G)=V_1 \cup V_2 \cup \ldots \cup V_t$ such that:
\begin{itemize}
\item $G[V_i]$ induces a clique or an independent set for each $1 \leq i \leq t$,
\item The bipartite graph induced between the parts $V_i$ and $V_j$ is $2K_2$-free.
\end{itemize}
\end{theorem}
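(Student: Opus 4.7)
The plan is to derive Theorem~\ref{thm:matchings} as a direct specialization of Theorem~\ref{main} with $k=1$, so the bulk of the proof is a bookkeeping check that the hypothesis and conclusion of the main theorem collapse to the statement here. First I would justify the hint in the paragraph preceding the theorem, namely that $G\in Free(\F_{n,1})$ automatically satisfies the richer hypothesis of Theorem~\ref{main} for an appropriate choice of $l$. The cleanest choice is $l=2$: since $G^1_{n,1}=nK_{1,1}=nK_2$ and $H^1_{n,1}=\overline{nK_2}$ are already listed in $\F_{n,1}$, one has the set-theoretic equality $\F_{n,1}\cup\{nK_2,\overline{nK_2}\}=\F_{n,1}$, so membership in $Free(\F_{n,1})$ is exactly membership in $Free(\F_{n,1}\cup\{nK_2,\overline{nK_2}\})$. (If one prefers a larger $l$, the same conclusion follows because $nK_l$ contains $nK_2$ as an induced subgraph for every $l\geq 2$ by picking one edge per clique, and dually $\overline{nK_l}$ contains $\overline{nK_2}$.)

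Next I would invoke Theorem~\ref{main} with parameters $n$, $k=1$, $l=2$ to obtain a partition of $V(G)$ into $t\leq T(n,1,2)$ parts, each inducing a clique or an independent set. Setting $T(n):=T(n,1,2)$ gives the constant promised in the statement; it depends only on $n$, as required.

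The remaining task is to translate the ``between parts'' condition. For $k=1$ the forbidden bipartite patterns are $2\Lambda_{2k-1}=2\Lambda_1$ and $2\up_{2k-1}=2\up_1$. By the defining notation $\Lambda_1=(\{a\},B,\{a\}\times B)$ with $|B|=1$ and $\up_1=(A,\{b\},A\times\{b\})$ with $|A|=1$ are both single edges, so $2\Lambda_1$ and $2\up_1$ coincide with the bipartite graph $2K_2$ (the two choices differ only in which side of the bipartition carries the ``centres''). Hence the condition supplied by Theorem~\ref{main} between $V_i$ and $V_j$ is precisely that this bipartite graph is $2K_2$-free, which is the second bullet of Theorem~\ref{thm:matchings}.

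I do not foresee a genuine obstacle here; the only step that requires any care is the verification that $2\Lambda_1$ and $2\up_1$, together, express $2K_2$-freeness of an unordered bipartite graph (where neither side is distinguished as ``top'' or ``bottom''), but this is immediate because the two patterns are obtained from one another by swapping the roles of the two parts. Consequently the proof reduces to the two short observations above plus a pointer to Theorem~\ref{main}.
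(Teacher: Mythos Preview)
Your derivation is correct: with $k=1$ and $l=2$ the hypothesis of Theorem~\ref{main} coincides with that of Theorem~\ref{thm:matchings} (since $nK_2=G^1_{n,1}$ and $\overline{nK_2}=H^1_{n,1}$ already lie in $\F_{n,1}$), and since $\Lambda_1=\up_1$ is a single bipartite edge, $(2\Lambda_1,2\up_1)$-freeness is exactly bipartite $2K_2$-freeness. So the specialization goes through with $T(n)=T(n,1,2)$.

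The paper, however, takes a genuinely different route. Rather than invoking Theorem~\ref{main}, it gives a direct, self-contained proof of Theorem~\ref{thm:matchings} in Sections~\ref{section:matching} and~\ref{section:matchingg}: first a bipartite version (Theorem~\ref{bipartitepartition}) is established by an inductive chain-template construction, then an elementary Ramsey-style argument (Lemma~\ref{comatching}) bounds the cochromatic number of $(nK_2,\overline{nK_2})$-free graphs by $3\cdot 6^{2(n-2)}$, and the two are combined. The paper explicitly presents this as a warm-up that ``encapsulates many of the ideas for the more general result \ldots\ but in a simpler setting.'' What your approach buys is brevity once Theorem~\ref{main} is available; what the paper's approach buys is independence from the heavier external ingredients used in the proof of Theorem~\ref{main} (the Kierstead--Penrice $\chi$-boundedness theorem and the Chudnovsky--Seymour $p$-split theorem behind Lemma~\ref{cochromatic}), together with explicit and considerably smaller bounds on $T(n)$.
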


\begin{figure}[H]
	%\centering
	\begin{tikzpicture}[scale=.6,auto=left]
	% S_{3,3,3}

		\foreach \start in {0, 9, 18}
		{		
		\node[w_vertex] (1) at (\start+0, 2) { }; 	
		\node[w_vertex] (2) at (\start+1, 2) { };
		\node[w_vertex] (3) at (\start+2, 2) { };
		\node[w_vertex] (4) at (\start+3, 2) { };
		\node[w_vertex] (5) at (\start+0,0) { };
		\node[w_vertex] (6) at (\start+1,0) { };
		\node[w_vertex] (7) at (\start+2,0) { };
		\node[w_vertex] (8) at (\start+3,0) { };
				
		\foreach \from/\to in {1/5,2/6,3/7,4/8}
	    	\draw (\from) -- (\to);
		%\foreach \from/\to in {5/6,6/7,7/8}
	    	%\draw (\from) -- (\to);
		
		}

	    	%\foreach \from/\to in {5/8,6/9,7/10}
	    	%\draw[thick] (\from) -- (\to);
	   % 	\coordinate [label=center:$G_4$] (S333) at (1.5,-1);
	   % 	\coordinate [label=center:$H_4$] (H1) at (6,-1);
	   %	\coordinate [label=center:$G_4^*$] (S333) at (10.5,-1);
	   % 	\coordinate [label=center:$H_4^*$] (H1) at (15,-1);
		% \coordinate [label=center:$\overline{H_4}$] (S333) at (19.5,-1);
	    %	\coordinate [label=center:$\overline{G_4}$] (H1) at (24,-1);

	% H_1
		\foreach \start in {18, 22.5}
		{
		\draw (\start+1,0) arc (240:300:2);	    	
		\draw (\start+0,0) arc (240:300:2);
		\draw (\start+0,0) arc (240:300:3);
		\draw (\start+0,0) arc (255:285:2);
		\draw (\start+1,0) arc (255:285:2);
		\draw (\start+2,0) arc (255:285:2);
		}

		\foreach \start in { 9,13.5, 18, 22.5}
		{
		\draw (\start+1,2) arc (120:60:2);	    	
		\draw (\start+0,2) arc (120:60:2);
		\draw (\start+0,2) arc (120:60:3);
		\draw (\start+0,2) arc (105:75:2);
		\draw (\start+1,2) arc (105:75:2);
		\draw (\start+2,2) arc (105:75:2);
		}    	
	    	
 		\foreach \start in {4.5, 13.5, 22.5}
		{		
		\node[w_vertex] (1) at (\start+0, 2) { }; 	
		\node[w_vertex] (2) at (\start+1, 2) { };
		\node[w_vertex] (3) at (\start+2, 2) { };
		\node[w_vertex] (4) at (\start+3, 2) { };
		\node[w_vertex] (5) at (\start+0,0) { };
		\node[w_vertex] (6) at (\start+1,0) { };
		\node[w_vertex] (7) at (\start+2,0) { };
		\node[w_vertex] (8) at (\start+3,0) { };
				
		\foreach \from/\to in {1/6,1/7,1/8,2/5,2/7,2/8,3/5,3/6,3/8,4/5,4/6,4/7}
	    	\draw (\from) -- (\to);
		%\foreach \from/\to in {5/6,6/7,7/8}
	    	%\draw (\from) -- (\to);
		}		

	\end{tikzpicture}
	%\includestandalone[width=\textwidth]{matchings}
	\caption{Family $\F_{4,1}$}
	\label{fig:matching}
\end{figure}

Theorem~\ref{thm:matchings} is interesting because it provides an analog to graphs of an important notion in permutations called "monotone griddability". The classes that are monotone griddable are the ones for which the permutations are partitionable into bounded number of cells with increasing or decreasing patterns in each cell. In \cite{HW06} Huczynska and Vatter proved that a class of permutations is monotone griddable if and only if it does not contain a large sum of 21's or skew-sum of 12's. This result provided an alternative proof of the jump of the speeds for permutations classes from 1 to the golden ratio (approximately 1.618) provided in \cite{permutation2} and this classification of permutation classes was later extended by Vatter \cite{permutation1} up to growth rate approximately 2.20557 (which could be thought of as a Bell number analog in graph theory as in both cases arbitrarily long "paths" appear for the first time). To see the correspondence between our result and the one in \cite{HW06} one needs to convert each entry in a permutation to a vertex and put an edge between two vertices if and only if the corresponding entries of the permutation form a decreasing pattern. In this case "a sum of 21's" corresponds to a matching, "a skew-sum of 12's" to a co-matching and the partition into increasing/decreasing cells corresponds to a partition into independent sets or cliques such that between every pair the induced bipartite graphs are $2K_2$-free. The relation of the results proved in this paper and the results of \cite{HW06} and \cite{bbw05} are presented in Figure~\ref{mot}.  

Our results can also be stated in terms of graph classes. Let $\X_1$ be a graph class consisting of star forests and let $\X_2, \X_3, \ldots, \X_8$ be the other 7 related classes obtained from star forests by various complementations. More formally, let $\X_i=\{G: G$ induced subgraph of $G_{n,k}^i$ for some $n, k \in \mathbb{N} \}$ for $i=1,2,3,4$ and let $\X_i=\{G: G$ induced subgraph of $H_{n,k}^{i-4}$ for some $n, k \in \mathbb{N} \}$ for $i=5,6,7,8$. Let $\X_9$ be the class of disjoint union of cliques and $\X_{10}$ the class of complements of disjoint union of cliques. Also define analogously $\Y_1$ to be the hereditary closure of the class of matchings and let $\Y_2, \Y_3, \Y_4, \Y_5, \Y_6$ be the hereditary closures of the different complements of matchings, one class for each type of complementation. Further, let us say that a graph is a \emph{$(t,k)$-graph} if it can be partitioned into $t$ cliques or independent sets with $(\Lambda_{2k-1}, \up_{2k-1})$-free graphs between the parts. Then our results are as follows:

\begin{theorem}\label{classes}
For a hereditary graph class $\X$ there exist two constants $T=T(\X)$ and $k=k(\X)$ such that each graph in the class is a $(T,k)$-graph if and only if $\X$ does not contain any of the classes $\X_1, \X_2, \ldots, \X_{10}$. For a hereditary graph class $\X$ there exist a constant $T=T(\X)$ such that each graph in the class is a $(T,1)$-graph if and only if $\X$ does not contain classes $\Y_1, \Y_2, \ldots, \Y_{6}$.

Moreover, all classes $\X$ consisting of $(T,k)$-graphs for some $T=T(\X)$ and $k=k(\X)$ have at most factorial speed of growth.  

\end{theorem}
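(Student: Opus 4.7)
The theorem comprises two structural equivalences together with a counting bound, so my proof would split into three parts corresponding to these claims.

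For the \emph{if} directions of both equivalences, I would exploit that each excluded class $\X_i$ (respectively $\Y_j$) is the hereditary closure of an explicit parametric family of graphs, so the condition $\X_i \not\subseteq \X$ is equivalent to the existence of specific witnesses $G^i_{n_i,k_i}$, $H^i_{n_i,k_i}$, $n_9 K_{l_9}$ or $\overline{n_{10} K_{l_{10}}}$ outside $\X$. Taking $n, k, l$ to be the maxima of these witness parameters across the ten (respectively six) obstructions places $\X \subseteq Free(\F_{n,k} \cup \{nK_l, \overline{nK_l}\})$, after which Theorem~\ref{main} delivers the required partition with $T = T(n,k,l)$. The second equivalence specialises to $k=1$, where the matching family $\Y_1$ plays the role of $\X_1$ and several $\X_i$ coincide after collapsing.

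For the \emph{only if} directions I must show that, for every fixed $T, k$, each forbidden class contains some graph that fails to be a $(T,k)$-graph. The two prototypical arguments rest on pigeonhole applied to a hypothetical $T$-partition into cliques and independent sets. For $\X_1$, I would take $nK_{1,k'}$ with $n \gg T$ and $k' \gg Tk$: centres are pairwise non-adjacent, so any part containing two or more centres must be independent, and a single such centre then distributes its $k'$ leaves among at most $T$ other parts, producing a vertex of degree at least $k'/T$ into one of them, in excess of the degree bound imposed by $(\Lambda_{2k-1}, \up_{2k-1})$-freeness. For $\X_9$, I would take $nK_l$ with $n, l \gg T$: each clique $K_l$ forces most of its vertices into a single clique-type part (since independent parts intersect each $K_l$ in at most one vertex), and then pigeonhole forces a clique-type part to contain vertices from two distinct $K_l$'s, contradicting their mutual non-adjacency. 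The remaining classes $\X_2, \ldots, \X_8, \X_{10}$ and $\Y_2, \ldots, \Y_6$ follow by variants of these two arguments combined with complementation symmetry, using that complementation swaps cliques with independent sets and $\Lambda_{2k-1}$ with $\up_{2k-1}$.

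For the factorial-speed claim I would count labelled $(T,k)$-graphs on $[n]$ by first choosing an ordered $T$-partition ($T^n$ options), then a clique/independent-set label for each part ($2^T$ options), and finally for each of the $\binom{T}{2}$ ordered pairs of parts a bipartite graph satisfying the freeness condition of Theorem~\ref{main}. The crux, which I expect to be the main obstacle, is bounding the number of bipartite graphs on a bipartition of total size $m$ that are $(2\Lambda_{2k-1}, 2\up_{2k-1})$-free. This freeness condition translates to the \emph{almost-nested} property: for any two vertices on the same side, one neighbourhood contains all but at most $2k-2$ elements of the other's. Ordering each side so that neighbourhoods are nested in this weak sense, each vertex contributes only a bounded symmetric difference relative to its predecessor; since $k$ is a constant, this gives $m^{O(m)}$ bipartite graphs. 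Multiplying the contributions over all pairs yields a total of $n^{O(n)}$, which is factorial speed in the usual sense.
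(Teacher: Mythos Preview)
Your plan is essentially the paper's own: the \emph{if} directions are verbatim the reductions to Theorems~\ref{main} and~\ref{thm:matchings}; the \emph{only if} directions are pigeonhole arguments on a hypothetical partition; and the factorial bound rests on the same symmetric-difference encoding of $2\Lambda_s$-free bipartite graphs (the paper's Lemma~\ref{factorial}). The paper then invokes the locally-bounded-covering theorem of~\cite{lmz12} rather than your direct count, but your multiplication over the $\binom{T}{2}$ pairs works just as well and is arguably more self-contained.

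One point to tighten. In your \emph{only if} argument for $\X_1$ you locate two centres in a common part $V_i$ and then argue that a \emph{single} centre has degree at least $k'/T$ into some $V_j$, ``in excess of the degree bound imposed by $(\Lambda_{2k-1}, \up_{2k-1})$-freeness.'' This reads the between-parts condition as forbidding a single star, i.e.\ as a degree bound. That is how the paper's definition of $(t,k)$-graph is literally phrased, but it is inconsistent with Theorem~\ref{main} and with the paper's own proof of this direction (Lemma~\ref{classes2}), both of which use the weaker condition $(2\Lambda_{2k-1}, 2\up_{2k-1})$-free. Under that intended reading, one high-degree centre is not yet a contradiction; you need two disjoint stars in the \emph{same} bipartite piece $G[V_i,V_j]$. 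The paper achieves this by taking $(T^2+1)K_{1,2kT}$ and assigning to each star the pair $(i,j)$ with $i$ the centre's part and $j$ a part holding at least $2k$ of its leaves, then pigeonholing on pairs. Your argument is one line away from this: just track the pair $(i,j)$ per star rather than per centre. The analogous remark applies to $\Y_1$ in the $(T,1)$ case, where the paper uses $(T^2+1)K_2$ and pairs of edge-endpoints.

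Your treatment of $\X_9$ is actually more explicit than the paper's, which simply asserts ``a similar argument'' for $\X_2,\ldots,\X_{10}$.
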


The rest of the paper is structured as follows. For the benefit of the reader, in Sections~\ref{section:matching} and~\ref{section:matchingg} we provide a direct proof of Theorem~\ref{thm:matchings} because it encapsulates many of the ideas for the more general result, Theorem~\ref{main}, but in a simpler setting. In Sections~\ref{section:stars} and~\ref{section:starsg} we prove Theorem~\ref{main} and in Section~\ref{section:factorial} we prove Theorem~\ref{classes}. Section~\ref{section:conclusion} contains concluding remarks and open questions.

\begin{figure}[H] 
\scalebox{1.0}
{
\minipage{0.35\textwidth}
\begin{center}
\begin{tikzpicture}[scale=.6,auto=left]

\fill[blue!15!white] (-0.8,0) -- (-2.2,0) -- (-3.2,2.7) -- (-1.8,2.7) -- cycle;
\fill[blue!15!white] (0.8,0) -- (2.2,0) -- (3.2,2.7) -- (1.8,2.7) -- cycle;

\fill[blue!15!white] (0,4.2) -- (0,5.8) -- (2.5,3.5) -- (2.5,1.9) -- cycle;
\fill[blue!15!white] (0,4.2) -- (0,5.8) -- (-2.5,3.5) -- (-2.5,1.9) -- cycle;

\fill[blue!15!white] (-1.5,-0.7) -- (-1.5,0.7) -- (1.5,0.7) -- (1.5,-0.7) -- cycle;

\draw[fill=white] (-1.5,0) ellipse (0.8cm and 0.8cm);
\draw[fill=white] (1.5,0) ellipse (0.8cm and 0.8cm);
\draw[fill=white] (-2.5,2.7) ellipse (0.8cm and 0.8cm);
\draw[fill=white] (2.5,2.7) ellipse (0.8cm and 0.8cm);
\draw[fill=white] (0,5) ellipse (0.8cm and 0.8cm);

	\end{tikzpicture}
\end{center}
\endminipage
\minipage{0.67\textwidth}
\begin{center}
\begin{tikzpicture}[scale=.6,auto=left]
\draw (0,0) -- (0,4);
\draw (2,0) -- (2,4);
\draw (4,0) -- (4,4);
\draw (0,0) -- (4,0);
\draw (0,2) -- (4,2);
\draw (0,4) -- (4,4);

%%%%%%%%%%%%%%% lower left points %%%%%%%%%%%%%%%%%%%%%
\node[point2] at (0.3, 0.3) {};
\node[point2] at (0.6, 0.6) {};
\node[point2] at (0.9, 0.9) {};
\node[point2] at (1.2, 1.2) {};
\node[point2] at (1.5, 1.5) {};
\node[point2] at (1.8, 1.8) {};

%%%%%%%%%%%%%%% lower right points %%%%%%%%%%%%%%%%%%%%%
\node[point2] at (2+0.3, 0.3) {};
\node[point2] at (2+0.6, 0.6) {};
\node[point2] at (2+0.9, 0.9) {};
\node[point2] at (2+1.2, 1.2) {};
\node[point2] at (2+1.5, 1.5) {};
\node[point2] at (2+1.8, 1.8) {};

%%%%%%%%%%%%%%% upper left points %%%%%%%%%%%%%%%%%%%%%
\node[point2] at (0.3, 2+0.3) {};
\node[point2] at (0.6, 2+0.6) {};
\node[point2] at (0.9, 2+0.9) {};
\node[point2] at (1.2, 2+1.2) {};
\node[point2] at (1.5, 2+1.5) {};
\node[point2] at (1.8, 2+1.8) {};

%%%%%%%%%%%%%%% upper right points %%%%%%%%%%%%%%%%%%%%%
\node[point2] at (2+0.3, 2+1.8) {};
\node[point2] at (2+0.6, 2+1.5) {};
\node[point2] at (2+0.9, 2+1.2) {};
\node[point2] at (2+1.2, 2+0.9) {};
\node[point2] at (2+1.5, 2+0.6) {};
\node[point2] at (2+1.8, 2+0.3) {};

\draw[thick, <->] (5.8,2) -- (7.8,2);

\foreach \s in {-0.8}
{
\draw[fill=white] (\s+11,0) ellipse (0.8cm and 0.8cm);
\draw[fill=white] (\s+11,4) ellipse (0.8cm and 0.8cm);
\draw[fill=white] (\s+14,0) ellipse (0.8cm and 0.8cm);
\draw[fill=blue!15!white] (\s+14,4) ellipse (0.8cm and 0.8cm);

%%%%%%%%%%%%%%%%  lower two %%%%%
\draw (\s+11.4, 0.6) -- (\s+13.6, 0.6);
\draw (\s+11.4, 0.6) -- (\s+13.4, 0.2);
\draw (\s+11.4, 0.6) -- (\s+13.4, -0.2);
\draw (\s+11.4, 0.6) -- (\s+13.6, -0.6);

\draw (\s+11.6, 0.2) -- (\s+13.4, 0.2);
\draw (\s+11.6, 0.2) -- (\s+13.4, -0.2);
\draw (\s+11.6, 0.2) -- (\s+13.6, -0.6);

\draw (\s+11.6,-0.2) -- (\s+13.4, -0.2);
\draw (\s+11.6,-0.2) -- (\s+13.6, -0.6);

\draw (\s+11.4,-0.6) -- (\s+13.6, -0.6);

%%%%%%%%%%%%%%%%% upper two %%%%%%%%%%%%%%%%%

\draw (\s+11.4, 4.6) -- (\s+13.6, 4.6);
\draw (\s+11.4, 4.6) -- (\s+13.4, 4.2);
\draw (\s+11.4, 4.6) -- (\s+13.4, 4-0.2);
\draw (\s+11.4, 4.6) -- (\s+13.6, 4-0.6);

\draw (\s+11.6, 4.2) -- (\s+13.4, 4.2);
\draw (\s+11.6, 4.2) -- (\s+13.4, 4-0.2);
\draw (\s+11.6, 4.2) -- (\s+13.6, 4-0.6);

\draw (\s+11.6, 4-0.2) -- (\s+13.4, 4-0.2);
\draw (\s+11.6, 4-0.2) -- (\s+13.6, 4-0.6);

\draw (\s+11.4, 4-0.6) -- (\s+13.6, 4-0.6);

%%%%%%%%%%%%%%%%% left side  %%%%%%%%%%%%%
\draw (\s+10.4, 4-0.6) -- (\s+10.4, 0.6);
\draw (\s+10.4, 4-0.6) -- (\s+10.8, 0.7);
\draw (\s+10.4, 4-0.6) -- (\s+11.2, 0.7);
\draw (\s+10.4, 4-0.6) -- (\s+11.6, 0.6);

\draw (\s+10.8, 4-0.7) -- (\s+10.8, 0.7);
\draw (\s+10.8, 4-0.7) -- (\s+11.2, 0.7);
\draw (\s+10.8, 4-0.7) -- (\s+11.6, 0.6);

\draw (\s+11.2, 4-0.7) -- (\s+11.2, 0.7);
\draw (\s+11.2, 4-0.7) -- (\s+11.6, 0.6);

\draw (\s+11.6, 4-0.6) -- (\s+11.6, 0.6);

%%%%%%%%%%%%%%%%% right side %%%%%%%%%%

\draw (\s+13.4, 4-0.6) -- (\s+13.4, 0.6);
\draw (\s+13.4, 4-0.6) -- (\s+13.8, 0.7);
\draw (\s+13.4, 4-0.6) -- (\s+14.2, 0.7);
\draw (\s+13.4, 4-0.6) -- (\s+14.6, 0.6);

\draw (\s+13.8, 4-0.7) -- (\s+13.8, 0.7);
\draw (\s+13.8, 4-0.7) -- (\s+14.2, 0.7);
\draw (\s+13.8, 4-0.7) -- (\s+14.6, 0.6);

\draw (\s+14.2, 4-0.7) -- (\s+14.2, 0.7);
\draw (\s+14.2, 4-0.7) -- (\s+14.6, 0.6);

\draw (\s+14.6, 4-0.6) -- (\s+14.6, 0.6);

}

	\end{tikzpicture}
	
\end{center}
\endminipage
}

\scalebox{0.4}
{
\minipage{\textwidth}
.
\endminipage
}

\scalebox{.85}
{
\minipage{0.4\textwidth}
\begin{center}
cliques/independent sets, bounded \\
degree/codegree between bags
\end{center}
\endminipage
\minipage{0.4\textwidth}
\begin{center}
cells with increasing/decreasing\\
subpermutations
\end{center}
\endminipage
\minipage{0.4\textwidth}
\begin{center}
cliques/independent sets\\
with $2K_2$-free graphs between them
\end{center}
\endminipage
}

\scalebox{1.0}
{
\minipage{1\textwidth}
\begin{center}
\begin{tikzpicture}[scale=.6,auto=left]

\fill[blue!15!white] (-0.8,0) -- (-2.2,0) -- (-3.2,2.7) -- (-1.8,2.7) -- cycle;
\fill[blue!15!white] (0.8,0) -- (2.2,0) -- (3.2,2.7) -- (1.8,2.7) -- cycle;

\fill[blue!15!white] (0,4.2) -- (0,5.8) -- (2.5,3.5) -- (2.5,1.9) -- cycle;
\fill[blue!15!white] (0,4.2) -- (0,5.8) -- (-2.5,3.5) -- (-2.5,1.9) -- cycle;

\fill[blue!15!white] (-1.5,-0.7) -- (-1.5,0.7) -- (1.5,0.7) -- (1.5,-0.7) -- cycle;

\draw (-1.2, 0.7) -- (1.2,0.7);
\draw (-1.2, -0.7) -- (1.2,-0.7);
\draw (-1.2, 0.7) -- (1.2,-0.7);

\draw (-3.2, 2.7) -- (-2.2,0);
\draw (-1.8, 2.7) -- (-0.8,0);
\draw (-3.2, 2.5) -- (-0.8,0.2);
 
\draw (3.2, 2.7) -- (2.2,0);
\draw (1.8, 2.7) -- (0.8,0);
\draw (3.2, 2.5) -- (0.8,0.2);

\draw (0, 5.8) -- (2.5,3.5);
\draw (0, 4.2) -- (2.5,1.9);
\draw (0, 4.2) -- (2.5,3.5);

\draw (0, 5.8) -- (-2.5,3.5);
\draw (0, 4.2) -- (-2.5,1.9);
\draw (0, 4.2) -- (-2.5,3.5);

\draw[fill=white] (-1.5,0) ellipse (0.8cm and 0.8cm);
\draw[fill=white] (1.5,0) ellipse (0.8cm and 0.8cm);
\draw[fill=white] (-2.5,2.7) ellipse (0.8cm and 0.8cm);
\draw[fill=white] (2.5,2.7) ellipse (0.8cm and 0.8cm);
\draw[fill=white] (0,5) ellipse (0.8cm and 0.8cm);

\draw[thick, ->] (-8, 6) -- (-4, 4);
\draw[thick, ->] (8, 6) -- (4, 4);

	\end{tikzpicture}
	
\end{center}
\endminipage
}

\scalebox{0.4}
{
\minipage{\textwidth}
.
\endminipage
}

\scalebox{0.85}
{
\minipage{1.17\textwidth}
\begin{center}
cliques/independent sets  \\
with $(2\Lambda_{2k-1}, 2 \up_{2k-1})$-free  \\
graphs between them  
\end{center}
\endminipage
}

%\minipage{1.4\textwidth}
\iffalse
\begin{table}
\begin{tabular}{*3c}
 				%\includegraphics{motivation2}
 				%& \multicolumn{2}{c}{ \includegraphics{motivation}} \\
\input{motivation2} & \multicolumn{2}{c}{\input{motivation}} \\
cliques/independent sets, bounded  & cells with increasing/decreasing 
   & cliques/independent sets \\ 
 degree/codegree between bags & subpermutations & with $2K_2$-free graphs between them \\
	\multicolumn{3}{c}{\input{motivation3}} 
   			%\multicolumn{3}{c}{\includegraphics{motivation3}} \\
\\
& cliques/independent sets & \\
& with $(2\Lambda_{2k-1}, 2 \up_{2k-1})$-free  & \\
& graphs between them & \\
\end{tabular}
\end{table}
\fi
%\endminipage

\caption{Partitions from \cite{bbw05} (upper left), \cite{HW06} (upper middle) and the results of this paper}
\label{mot}
\end{figure}
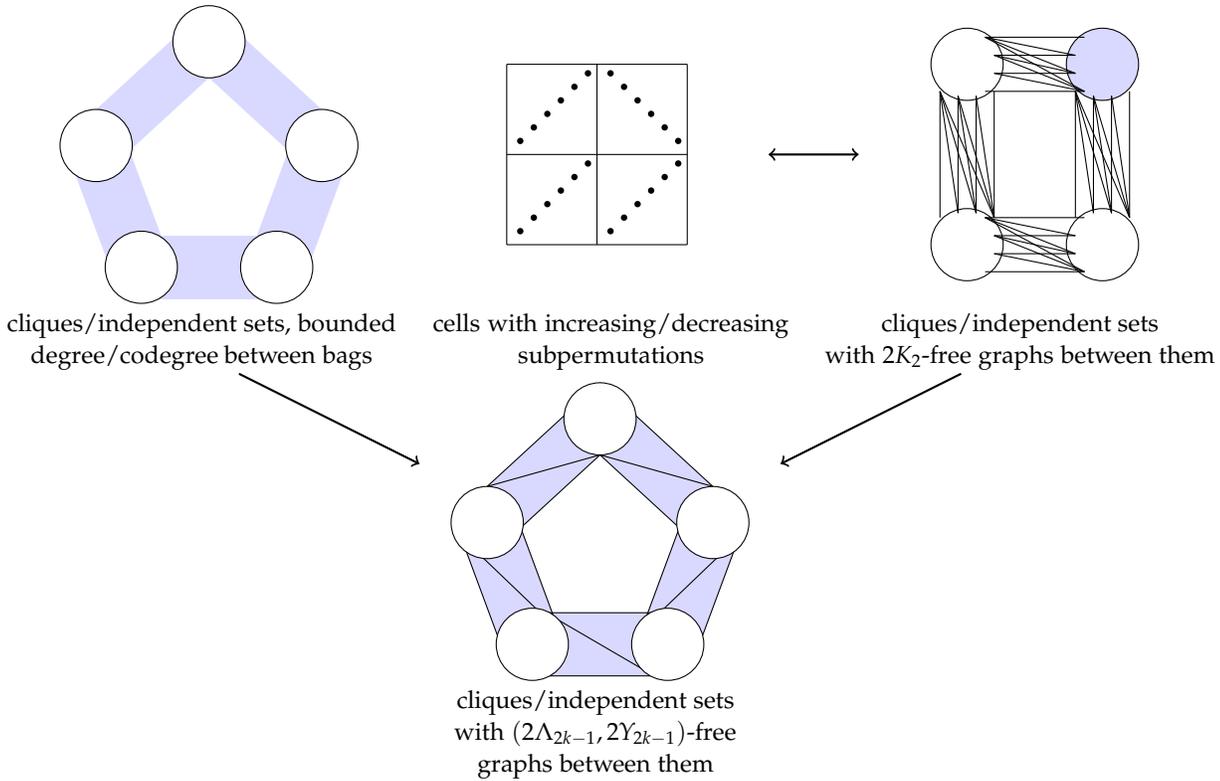

%%%%%%%%%%%%%%%%%%%%%% END OF INTRODUCTION %%%%%%%%%%%%%%%

\section{Bipartite graphs without a matching and a bipartite co-matching} \label{section:matching}

For a bipartite graph $G=(A, B, \mathcal{E})$ we denote its \emph{bipartite complement} by $\overline{G}^{bip}=(A, B, A \times B \backslash \mathcal{E})$. We denote the bipartite subgraph of $G=(A,B, \mathcal{E})$ induced by the vertex subsets $A' \subset A$ and $B' \subset B$ by $G[A',B']=(A', B', \mathcal{E} \cap A' \times B')$. In this section we prove the following theorem, which is the bipartite case of Theorem~\ref{thm:matchings} and which will be extended to the general case in Section~\ref{section:matchingg}. 

\begin{theorem} \label{bipartitepartition}
For all $n, m \in \mathbb{N}$ there is a fixed integer $f(n,m) \in \mathbb{N}$ such that the following holds. Let $G=(A \cup B, \mathcal{E} \subseteq A \times B)$ be a bipartite graph which does not contain a matching $nK_2$ and a bipartite co-matching $\overline{mK_2}^{bip}$. Then there is a partition $A=A_1 \cup A_2 \cup \ldots \cup A_u$, 
$B=B_1 \cup B_2 \cup \ldots \cup B_u$ with $u \leq f(n,m)$ such that $G[A_i, B_j]$ is $2K_2$-free for all $1 \leq i, j \leq u$. 
\end{theorem}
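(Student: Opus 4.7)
The plan is to reduce the problem to a Ramsey-type bound on antichains of neighborhoods and then invoke Dilworth's theorem. I would use the standard characterization: a bipartite graph $(X, Y, \mathcal{F})$ is $2K_2$-free as an induced bipartite subgraph iff its family of neighborhoods $\{N(x) : x \in X\}$ is linearly ordered by inclusion, i.e., it is a \emph{chain graph}.

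First, I would quotient $A$ by the equivalence $a \sim a'$ iff $N(a) = N(a')$ (twins have identical behavior, so may be collected together) and, on the quotient, consider the partial order $a \preceq a'$ iff $N(a) \subseteq N(a')$. If the width of this poset is bounded by some $w = w(n,m)$, then by Dilworth's theorem $A$ decomposes into at most $w$ chains $A_1, \ldots, A_u$. Taking $B_1 := B$ and $B_2 = \cdots = B_u = \emptyset$ (or any convenient balanced partition) gives a partition witnessing the theorem: each $G[A_i, B_1]$ is a chain graph and hence $2K_2$-free, while the remaining cross-pieces $G[A_i, B_j]$ are empty and trivially $2K_2$-free. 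Consequently the entire work reduces to bounding the width of $(A, \preceq)$.

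The heart of the argument is therefore the following \emph{key lemma}: there exists $\phi = \phi(n, m)$ such that any antichain $\{a_1, \ldots, a_\phi\}$ in $(A, \preceq)$ forces $G$ to contain $nK_2$ or $\overline{mK_2}^{bip}$. My plan for this is a multi-layered Ramsey argument. Apply Ramsey to the 2-coloring of pairs $\{i, j\}$ by whether $|N(a_i) \cap N(a_j)| < D$ or $\geq D$, for a carefully chosen threshold $D$. In the \emph{small-overlap} branch, a union-bound shows that if $\phi$ is large enough then each $N(a_i)$ still possesses an element lying outside $\bigcup_{j \neq i} N(a_j)$; choosing such private representatives for $n$ of the $a_i$ gives an induced $nK_2$. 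In the \emph{large-overlap} branch, pass to the bipartite complement $\overline{G}^{bip}$: large pairwise intersections in $G$ become small pairwise intersections of the complementary sets, so the same union-bound argument produces an induced matching in $\overline{G}^{bip}$, which is precisely an induced $\overline{mK_2}^{bip}$ in $G$. A preliminary Ramsey step classifying the $a_i$ by whether $|N(a_i)|$ is small or large (so that one of the two calculations closes) may be needed to set up the union bound cleanly.

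The main obstacle is the key lemma, specifically the overlap-size dichotomy. A large antichain of neighborhoods does \emph{not} by itself guarantee an induced matching: the three 2-subsets of a 3-element set form an antichain with no private elements, and only in the complement do we see a co-matching. The Ramsey on overlap sizes is essential to push any large antichain into one of the two regimes where a private-representatives extraction succeeds; calibrating $D$ and the various Ramsey bounds so that both branches deliver the forbidden substructure with quantities depending only on $n$ and $m$ is the technical bookkeeping that one must carry out.
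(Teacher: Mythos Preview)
Your reduction to Dilworth is appealing, but the key lemma is false: in a bipartite graph that is $(3K_2,\overline{3K_2}^{bip})$-free, the neighbourhood poset on $A$ can have arbitrarily large width. Take $A=\{a_1,\dots,a_t\}$ and $B=\{b_1,\dots,b_t\}\cup\{c_1,\dots,c_t\}$, with $a_i\sim b_j$ iff $j\le i$ and $a_i\sim c_j$ iff $j\ge i$. For $i<j$ we have $c_i\in N(a_i)\setminus N(a_j)$ and $b_j\in N(a_j)\setminus N(a_i)$, so $\{N(a_i)\}$ is an antichain of size $t$. Yet there is no induced $3K_2$: for $i<j<k$ the ``middle'' vertex $a_j$ has no private neighbour, since any $b_\ell\in N(a_j)\setminus N(a_i)$ forces $\ell>i$, hence $\ell\le j<k$ and $b_\ell\in N(a_k)$, while any $c_\ell\in N(a_j)\setminus N(a_k)$ forces $\ell<k$, hence $\ell\ge j>i$ and $c_\ell\in N(a_i)$. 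The bipartite complement is isomorphic to the same construction, so there is no induced $\overline{3K_2}^{bip}$ either. Your Ramsey-on-overlap plan cannot rescue this: for \emph{every} threshold $D$, the monochromatic subfamily you extract is again (up to relabelling) a graph of the same shape, with the same absence of private neighbours on either side.

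The conceptual point is that a bound on the width of $(A,\preceq)$ would let you partition \emph{only} $A$ and leave $B$ intact; the example shows this is impossible. The theorem genuinely requires partitioning both sides: in the example above, splitting $B$ into $\{b_1,\dots,b_t\}$ and $\{c_1,\dots,c_t\}$ already yields two half-graphs, each $2K_2$-free. The paper's proof builds exactly this kind of two-sided structure. It constructs, for any skew-join prime $G$, a \emph{chain template}: partitions $A=\bigcup A_i$, $B=\bigcup B_i$ with $A_i$ joined to $B_j$ for $j>i+1$ and co-joined for $j<i$, so that all nontrivial structure lives between consecutive bags. A short covering argument (Observations~\ref{simple1}--\ref{simple2}) then refines each $A_i,B_i$ into $q=(n-1)(m-1)$ pieces so that between consecutive bags one of the parameters drops (to $Free((n-1)K_2,\overline{mK_2}^{bip})$ or $Free(nK_2,\overline{(m-1)K_2}^{bip})$); collapsing by parity and inducting on $n+m$ finishes. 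The essential move you are missing is the chain-template decomposition of $B$ alongside $A$.
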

 
Throughout the section we will be using the following notation.

\begin{definition}\label{basicdef}
For a graph $G$ and two disjoint vertex subsets $V_1, V_2 \subset V(G)$, we will say $V_1$ is joined (resp. co-joined) to $V_2$ if $G$ contains all (resp. none) of the edges between $V_1$ and $V_2$. By $N(V_1)$ we will denote the neighbourhood of the subset $V_1$, i.e $N(V_1)=\cup_{v \in V_1} N(v)$. The non-neighbourhood of subset $V_1$ will be denote by $\overline{N}(V_1)= \{v \in V(G) \backslash V_1: v$ has a non-neighbour in $V_1\}$. Finally, we will say that a subset $V_1$ covers (resp. co-covers) subset $V_2$ if $N(V_1) \supseteq V_2$ (resp. $\overline{N}(V_1) \supseteq V_2$). 
\end{definition}
 
The proof of Theorem~\ref{bipartitepartition} is by induction on $n+m$ and for induction step we will show how to construct the following partition, which we call \emph{$(n,m,q)$-chain template}.   

\begin{definition} \label{templatedef}
Let $G=(A, B, \mathcal{E})$ be a bipartite graph. We will call the partition $A=A_1 \cup A_2 \cup \ldots \cup A_z$,
$B=B_1 \cup B_2 \cup \ldots \cup B_z$ a chain template, if it satisfies the following two conditions:
\begin{itemize}[noitemsep,topsep=0pt,parsep=0pt,partopsep=0pt,labelwidth=2.5cm,align=left,itemindent=2.5cm]
\item[(*)] $A_i$ is joined to $B_j$ for all $j>{i+1}$.
\item[(**)   ] $A_i$ is co-joined to $B_j$ for all $j<i$. 
\end{itemize}

Suppose each part is subdivided into $q$ pieces $A_i=A_{i1} \cup A_{i2} \cup \ldots \cup A_{iq}$ and $B_i=B_{i1} \cup B_{i2} \cup \ldots \cup B_{iq}$ such that the following holds:
\begin{itemize}[noitemsep,topsep=0pt,parsep=0pt,partopsep=0pt,labelwidth=2.5cm,align=left,itemindent=2.5cm]
\item[(***)   ] $G[A_{ig}, B_{jh}]$ is in $Free((n-1)K_2, \overline{mK_2}^{bip})$ for $j=i+1$ and any $1 \leq g,h \leq q$.
\item[(****)   ] $G[A_{ig}, B_{jh}]$ is in $Free(nK_2, \overline{(m-1)K_2}^{bip})$ for $j=i$ and any $1 \leq g,h \leq q$.

Then we will call this refined partition an $(n, m, q)$-chain template or a refined chain template (with parameters $(n,m,q)$).
\end{itemize}
\end{definition}

A chain template is a partition that ensures that all non-trivial graphs appear only between consecutive parts of the partition (the shaded regions in Figure~\ref{template}), and between other pairs of parts the induced bipartite graphs are either complete or empty as specified by the conditions (*)--(**). As we will see later, a chain template can be obtained for any bipartite graph and might be a useful tool as it might give additional insights/restrictions on graphs that can lie between the the consecutive bags. In our particular case, when dealing with graphs excluding matchings and co-matchings, we will be able to deduce that (***)--(****) holds.

Before we proceed to constructing the $(n,m,q)$-chain template, we will first prove the lemma, which will show how $(n,m,q)$-chain template can be collapsed to a bounded number of parts, which will be required for our induction step. 

\begin{lemma} \label{partition2}
Let $G=(A,B,\mathcal{E})$ be a bipartite graph that admits an $(n,m,q)$ - chain template. Then the parts $A$ and $B$ can be partitioned into $2q$ parts each such that the graph between any two parts is either in $Free(nK_2, \overline{(m-1)K_2}^{bip})$ or in $Free((n-1)K_2, \overline{mK_2}^{bip})$. 
\end{lemma}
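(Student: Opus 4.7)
The plan is to coarsen the refined chain template into a partition with $2q$ parts on each side by grouping layers of the same parity, and then to verify that each pair of coarse parts inherits a chain-like structure forcing it into one of the two target classes. Concretely, for each $g \in \{1,\ldots,q\}$ I set $A'_g = \bigcup_{i \text{ odd}} A_{ig}$ and $A''_g = \bigcup_{i \text{ even}} A_{ig}$, and define $B'_h, B''_h$ analogously; this produces $2q$ super-parts on each side. By (*) and (**), in any super-pair the bipartite subgraph is built from blocks $G[A_{ig}, B_{jh}]$ that are complete whenever $j \geq i+2$ and empty whenever $j \leq i-1$. The only possibly nontrivial blocks therefore sit at $j=i$ (for super-pairs of equal parity) or at $j=i+1$ (for super-pairs of opposite parity), and after a trivial reindexing these line up along an ordinary diagonal, so each super-pair acquires a clean \emph{chain} structure with its nontrivial blocks on the diagonal.

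The main step, and the main obstacle, is the following chain-collapsing claim: if $H=(X,Y,\mathcal{E})$ admits a decomposition $X=X_1\cup\cdots\cup X_r$, $Y=Y_1\cup\cdots\cup Y_r$ with $X_i$ joined to $Y_j$ for $j>i$, co-joined for $j<i$, and each diagonal block $H[X_i,Y_i] \in Free(sK_2,\overline{tK_2}^{bip})$, then $H \in Free(sK_2,\overline{tK_2}^{bip})$. To see $sK_2$-freeness, suppose an induced matching $(a_l,b_l)_{l=1}^{s}$ exists with $a_l \in X_{i_l}$ and $b_l \in Y_{j_l}$. The edge $(a_l,b_l)$ forces $i_l \leq j_l$, and for $l \neq l'$ the induced condition forces $j_l \leq i_{l'}$; chaining $j_l \leq i_{l'} \leq j_{l'} \leq i_l \leq j_l$ collapses all these indices to a single common value, so the whole matching must live inside one diagonal block, contradicting the hypothesis. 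The $\overline{tK_2}^{bip}$-freeness follows by duality: after reversing the indexing, the bipartite complement $\overline{H}^{bip}$ is again a chain whose diagonal blocks $\overline{H[X_i,Y_i]}^{bip}$ belong to $Free(tK_2,\overline{sK_2}^{bip})$, so the first half of the claim yields $\overline{H}^{bip} \in Free(tK_2)$, hence $H \in Free(\overline{tK_2}^{bip})$.

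Applying this claim to each super-pair finishes the proof. For a same-parity super-pair, say $(A'_g, B'_h)$, the relevant diagonal blocks come from (****) and therefore lie in $Free(nK_2, \overline{(m-1)K_2}^{bip})$; the claim with parameters $(s,t)=(n,m-1)$ places the whole super-pair in that class, and the argument for $(A''_g, B''_h)$ is identical. For an opposite-parity super-pair, say $(A'_g, B''_h)$, the nontrivial blocks sit on the superdiagonal $j=i+1$ and come from (***); after the reindexing that aligns this superdiagonal with the ordinary diagonal, the claim with $(s,t)=(n-1,m)$ yields membership in $Free((n-1)K_2, \overline{mK_2}^{bip})$, and the same works for $(A''_g, B'_h)$. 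Hence between any two of the $2q$ super-parts the induced bipartite graph lies in one of the two required Free classes, completing the proof.
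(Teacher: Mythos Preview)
Your proof is correct and follows essentially the same approach as the paper: group the refined layers by parity into $2q$ super-parts on each side, and then argue that any induced matching or co-matching between two super-parts must be confined to a single diagonal block of the chain. The paper phrases the confinement step via nested neighbourhoods (two vertices of $A'_{1g}$ from different odd layers have comparable neighbourhoods in $B'_{1h}$), while you obtain it by direct index-chasing in your chain-collapsing claim; these are equivalent arguments, and your explicit lemma with the duality step for the co-matching case is a tidy packaging of the same idea.
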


\begin{proof}

We use the notation denoting the parts of an $(n,m,q)$-chain template as given in the Definition~\ref{templatedef}. For each $1 \leq g \leq q$ define 
\[ A'_{1g}= \bigcup_{\substack{i \ odd,\\1 \leq i \leq z}} A_{ig}, \quad  
A'_{2g}= \bigcup_{\substack{i \ even,\\1 \leq i \leq z}} A_{ig}, \quad 
B'_{1g}= \bigcup_{\substack{i \ odd,\\1 \leq i \leq z}} B_{ig} \quad \textrm{and} \quad 
B'_{2g}= \bigcup_{\substack{i \ even,\\1 \leq i \leq z}} B_{ig}.\]

Figure~\ref{template} illustrates the bags that are picked to construct the union $A'_{11}$ and $B'_{13}$. We will show that $A=\bigcup_{\substack{1\leq i \leq 2,\\ 1 \leq g \leq q}} A'_{ig}$, $B=\bigcup_{\substack{1\leq i \leq 2,\\ 1 \leq g \leq q}}B'_{ig}$ is the required partition into $2q$ parts. In other words, we will show that between any two parts $A'_{ig}$ and $B'_{jh}$, with $i, j \in \{1, 2\}$ and $g, h \in \{1, 2, \ldots, q\}$, the induced graph $G[A'_{ig}, B'_{jh}]$ is either in $Free(nK_2, \overline{(m-1)K_2}^{bip})$ or in $Free((n-1)K_2, \overline{mK_2}^{bip})$.

Consider the graph $G[A'_{1g}, B'_{1h}]$. Take two vertices $v, w \in A'_{1g}$. If $v \in A_{(2i+1) g}$, $w \in A_{(2j+1) g}$ 
with $i<j$,  then it is easy to see that $N(v) \supseteq N(w)$. Hence the two vertices can have non-nested neighbourhoods only if they 
 belong to the same set $A_{(2i+1) g}$. Similarly, two vertices in $B'_{1h}$ can have incomparable neighbourhoods only if they belong to the same  
 subset $B_{(2j+1) h}$. Thus, any nontrivial matching or co-matching must be contained in some $G[A_{(2i+1) g}, B_{(2j+1)h}]$. 
If $i \neq j$, then $A_{(2i+1) g}$ joined or co-joined to $B_{(2j+1)h}$ and if $i=j$, then the graph $G[A_{(2i+1) g}, B_{(2i+1)h}] \in Free((n-1)K_2, \overline{mK_2})$. Hence, we conclude that $G[A'_{1g}, B'_{1h}]$ contains no matchings of size
 $n-1$ or co-matchings of size $m$, i.e. $G[A'_{1g}, B'_{1h}] \in Free((m-1)K_2, \overline{nK_2})$.    

The analogous arguments hold for the graphs $G[A'_{1g}, B'_{2h}], G[A'_{2g}, B'_{1h}], G[A'_{2g}, B'_{2h}]$.
\end{proof}

\begin{figure}[H]
	%\centering
	\scalebox{.75}
	{
	\begin{tikzpicture}[scale=.6,auto=left]

%%%%%%%%%%%%%%%%%%%%%%%%%%%%%%%%%%%%%%%%%%%%
		%\draw[step=1cm,gray,very thin] (0,0) grid (13,5);
		%\foreach \x in {0,1,2,3,4,5,6,7,8,9,10,11,12,13}
    		%\draw (\x cm,1pt) -- (\x cm,-1pt) node[anchor=north] {$\x$};
		%\foreach \y in {0,1,2,3,4}
    		%\draw (1pt,\y cm) -- (-1pt,\y cm) node[anchor=east] {$\y$};
		\foreach \start in {}
		{		
		\node[w_vertex] (1) at (\start+0, 2) {}; 	
		\node[w_vertex] (2) at (\start+1, 2) { };
		\node[w_vertex] (3) at (\start+2, 2) { };
		\node[w_vertex] (4) at (\start+3, 2) { };
		\node[w_vertex] (5) at (\start+0,0) { };
		\node[w_vertex] (6) at (\start+1,0) { };
		\node[w_vertex] (7) at (\start+2,0) { };
		\node[w_vertex] (8) at (\start+3,0) { };				
		\foreach \from/\to in {1/5,2/6,3/7,4/8}
	    	\draw (\from) -- (\to);
		%\foreach \from/\to in {5/6,6/7,7/8}
	    	%\draw (\from) -- (\to);		
		}
%%%%%%%%%%%%%%%%%%%%%%%%%%%%%%%%%%%%%%%%%%%%%%%

	\foreach{\h} in {7} 
	{
		\foreach \x/\y in {0/\h+2}
		{
		\draw [dotted] (\x+0,\y+0) -- (\x+33,\y+0); 
		\draw  (\x+0,\y+0) -- (\x+0, \y-1);
		\draw  (\x+14,\y+0) -- (\x+14, \y-1);
		\draw  (\x+28, \y+0) -- (\x+28, \y-1);
		}
		\draw (0, \h+2) node[anchor=south] {$A'_{11}$};
	
		\foreach \x/\y in {1/-2}
		{
		\draw [dotted] (\x+0,\y+0) -- (\x+32,\y+0); 
		\draw  (\x+0,\y+0) -- (\x+0, \y+1);
		\draw  (\x+14,\y+0) -- (\x+14, \y+1);
		\draw  (\x+28, \y+0) -- (\x+28, \y+1);
		}
		\draw (1, -2) node[anchor=north] {$B'_{13}$};

	%%%%%%%              Shades            %%%%%%
		
		\foreach{\start} in {0, 7, 14, 21, 28}
			{\fill[blue!15!white] (\start-4,0) -- (\start-1,\h) -- (\start+5,\h) -- (\start+2,0) -- cycle;}
		\foreach{\start} in {0, 7, 14, 21}
			{\fill[blue!15!white] (\start+3,0) -- (\start-1,\h) -- (\start+5,\h) -- (\start+9,0) -- cycle;}
		
		\foreach{\start} in {0, 7, 14, 21,28}
			{
			\draw (\start+-4, 0) -- (\start-1, \h);
			\draw (\start+2, 0) -- (\start+5, \h);
			}
		\foreach{\start} in {0, 7, 14, 21}
			{
			\draw (\start+-1, \h) -- (\start+3, 0);
			\draw (\start+5, \h) -- (\start+9, 0);		
			}	
		
	%	\draw [dotted] (10, 0) -- (12, 0);
	%	\draw [dotted] (13, \h) -- (15, \h);
	%	\draw [dotted] (11.5, \h*0.5) -- (13.5, \h*0.5);
	%	\draw [dotted] (34, 0) -- (36, 0);	
	%	\draw [dotted] (35, \h*0.5) -- (36, \h*0.5);

	%%%%%%%%%%%   Ellipses     %%%%%%%%%%%%
		\foreach \start in {0,7,14,21,28}
		{\draw[fill=white] (\start+2,\h) ellipse (3cm and 1cm);	}
		
		\foreach \start in {-7, 0,7,14,21}
		{\draw[fill=white] (\start+6,0) ellipse (3cm and 1cm);}

		\foreach \z in {-7, 0, 7, 14, 21}
			{
			\draw (\z+5,-1) .. controls (\z+5.5,0)  .. (\z+5,1);
			\draw (\z+7,-1) .. controls (\z+8.5,-0.3) and (\z+6.5, 0.3) .. (\z+7,1);
			}
		\foreach \z in {-7, 0, 7, 14,21}
			{
			\draw (\z+3+5,\h-1) .. controls (\z+3+5.5,\h)  .. (\z+3+5,1+\h);
			\draw (\z+3+7,\h-1) .. controls (\z+3+8.5,\h-0.3) and (\z+3+6.5, 0.3+\h) .. (\z+3+7,1+\h);
			}

	%%%%%%%%%%%%%labelling %%%%%%%%%%%%%

	%	\foreach \x/\y in {-3/-1.6, 4/-1.6,  11/-1.6, 18/-1.6, 25/-1.6}		
	%	{
	%	\draw [dotted] (\x+0,\y+0) -- (\x+4,\y+0); 
	%	\draw [dashed] (\x+0,\y+0) -- (\x+0, \y+0.15);
	%	\draw [dashed] (\x+2,\y+0) -- (\x+2, \y+0.15);
	%	\draw [dashed] (\x+4, \y+0) -- (\x+4, \y+0.15);
	%	}

	%	\foreach \x/\y in {0/\h+1.65, 7/\h+1.65, 14/\h+1.65, 21/\h+1.65, 28/\h+1.65}		
	%	{
	%	\draw [dotted] (\x+0,\y) -- (\x+4,\y); 
	%	\draw [dashed] (\x+0,\y+0) -- (\x+0, \y-0.15);
	%	\draw [dashed] (\x+2,\y+0) -- (\x+2, \y-0.15);
	%	\draw [dashed] (\x+4, \y+0) -- (\x+4, \y-0.15);
	%	}
		\foreach \z/\y in {-1/-0.5}
		{	
		\draw (2, \h+1.5+\y) node[font=\fontsize{12}{12}, anchor=south] {$A_{1}$};
		\draw (0, \h+0.7+\z) node[anchor=south] {$A_{11}$};
		\draw (0+2, \h+0.7+\z) node[anchor=south] {$A_{12}$};
		\draw (0+4, \h+0.7+\z) node[anchor=south] {$A_{13}$};

		\draw (2+7, \h+1.5+\y) node[font=\fontsize{12}{12}, anchor=south] {$A_{2}$};
		\draw (7, \h+0.7+\z) node[anchor=south] {$A_{21}$};
		\draw (7+2, \h+0.7+\z) node[anchor=south] {$A_{22}$};
		\draw (7+4, \h+0.7+\z) node[anchor=south] {$A_{23}$};

		\draw (2+14, \h+1.5+\y) node[font=\fontsize{12}{12}, anchor=south] {$A_{3}$};
		\draw (14, \h+0.7+\z) node[anchor=south] {$A_{31}$};
		\draw (14+2, \h+0.7+\z) node[anchor=south] {$A_{32}$};
		\draw (14+4, \h+0.7+\z) node[anchor=south] {$A_{33}$};

		\draw (2+21, \h+1.5+\y) node[font=\fontsize{12}{12}, anchor=south] {$A_{4}$};
		\draw (21, \h+0.7+\z) node[anchor=south] {$A_{41}$};
		\draw (21+2, \h+0.7+\z) node[anchor=south] {$A_{42}$};
		\draw (21+4, \h+0.7+\z) node[anchor=south] {$A_{43}$};
	
		\draw (10+20, \h+1.5+\y) node[font=\fontsize{12}{12}, anchor=south] {$A_{5}$};
		\draw (10+18, \h+0.7+\z) node[anchor=south] {$A_{51}$};
		\draw (10+18+2, \h+0.7+\z) node[anchor=south] {$A_{52}$};
		\draw (10+18+4, \h+0.7+\z) node[anchor=south] {$A_{53}$};		
		}
		
	%	\draw (3+2+28, \h+1.5) node[anchor=south] {$A_{i+1}$};
				
		\foreach \z/\y in {1/0.5}
		{
		\draw (-3+2, -1.6+\y) node[font=\fontsize{12}{12}, anchor=north] {$B_{1}$};
		\draw (-3, -0.7+\z) node[anchor=north] {$B_{11}$};
		\draw (-3+2, -0.7+\z) node[anchor=north] {$B_{12}$};
		\draw (-3+4, -0.7+\z) node[anchor=north] {$B_{13}$};

		\draw (4+2, -1.6+\y) node[font=\fontsize{12}{12}, anchor=north] {$B_{2}$};
		\draw (4, -0.7+\z) node[anchor=north] {$B_{21}$};
		\draw (4+2, -0.7+\z) node[anchor=north] {$B_{22}$};
		\draw (4+4, -0.7+\z) node[anchor=north] {$B_{23}$};

		\draw (4+2+7, -1.6+\y) node[font=\fontsize{12}{12}, anchor=north] {$B_3$};
		\draw (11, -0.7+\z) node[anchor=north] {$B_{31}$};
		\draw (11+2, -0.7+\z) node[anchor=north] {$B_{32}$};
		\draw (11+4, -0.7+\z) node[anchor=north] {$B_{33}$};

		\draw (-1+21, -1.6+\y) node[font=\fontsize{12}{12}, anchor=north] {$B_{4}$};
		\draw (18, -0.7+\z) node[anchor=north] {$B_{41}$};
		\draw (18+2, -0.7+\z) node[anchor=north] {$B_{42}$};
		\draw (18+4, -0.7+\z) node[anchor=north] {$B_{43}$};
	
		\draw (-1+28, -1.6+\y) node[font=\fontsize{12}{12}, anchor=north] {$B_{5}$};	
		\draw (25, -0.7+\z) node[anchor=north] {$B_{51}$};
		\draw (25+2, -0.7+\z) node[anchor=north] {$B_{52}$};
		\draw (25+4, -0.7+\z) node[anchor=north] {$B_{53}$};
		}

	\foreach{\z} in {17} 
	{
	}

	}
	\end{tikzpicture}
	
	}
	%\includegraphics[scale=0.75]{proofchain01}
	%\includestandalone[width=\textwidth]{matchings}
	\caption{Proof of Lemma~\ref{partition2}}
	\label{template}
\end{figure}

Now we proceed to showing how to construct an $(n,m,q)$-chain template. Our first observation is that we only need to provide these constructions for \emph{skew-join prime} bipartite graphs.

\begin{definition}
For two bipartite graphs $G_1=(A_1, B_1, \mathcal{E}_1)$ and $G_2=(A_2, B_2, \mathcal{E}_2)$ the skew-join is defined to be $G_1 \oslash G_2 = (A_1 \cup A_2, B_1 \cup B_2, \mathcal{E}_1 \cup \mathcal{E}_2 \cup (A_1 \times B_2))$. We say that a bipartite graph $G=(A, B, \mathcal{E})$ is a skew-join prime, if, whenever $G=G_1 \oslash G_2$, we have either $G_1=\emptyset$ or $G_2=\emptyset$. 
\end{definition}

\begin{observation} \label{obs1}
If two graphs $G_1$ and $G_2$ both admit chain templates (resp. $(n,m,q)$-chain templates), then $G_1 \oslash G_2$ admits a chain template (resp. $(n,m,q)$-chain template). 
\end{observation}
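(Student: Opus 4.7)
The plan is to verify that concatenating the two chain templates, with $G_1$'s bags listed first, yields a chain template on $G_1 \oslash G_2$. Writing the template of $G_s$ (for $s\in\{1,2\}$) as $A^{(s)}=A^{(s)}_1\cup\cdots\cup A^{(s)}_{z_s}$ and $B^{(s)}=B^{(s)}_1\cup\cdots\cup B^{(s)}_{z_s}$, I will define a partition of $G_1\oslash G_2$ into $z:=z_1+z_2$ bags by setting $A_i:=A^{(1)}_i$, $B_i:=B^{(1)}_i$ for $i\leq z_1$ and $A_{z_1+i}:=A^{(2)}_i$, $B_{z_1+i}:=B^{(2)}_i$ for $1\leq i\leq z_2$.

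For the verification of (*) and (**) in the chain template case, I will split pairs of bags into "within-$G_s$" pairs and "cross" pairs. Within-$G_s$ pairs are immediate, because the edges of $G_1\oslash G_2$ restricted to $A^{(s)}\times B^{(s)}$ coincide with those of $G_s$, so the hypothesized template conditions transfer verbatim. For cross pairs, the definition of $\oslash$ does the work directly: if $i\leq z_1<j$, then $A_i\subseteq A^{(1)}$ is completely joined to $B_j\subseteq B^{(2)}$ by the $(A^{(1)}\times B^{(2)})$-term in the skew-join, giving (*); and if $j\leq z_1<i$, then $A_i\subseteq A^{(2)}$ is co-joined to $B_j\subseteq B^{(1)}$, since the skew-join adds no edges between $A^{(2)}$ and $B^{(1)}$ and none existed before, giving (**).

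For the refined $(n,m,q)$-template, I will reuse the $q$-subdivisions of each $A^{(s)}_i, B^{(s)}_i$ verbatim. Conditions (***) and (****) for pairs $(i,j)$ with $j\in\{i,i+1\}$ lying entirely inside the indices of $G_1$ or entirely inside the indices of $G_2$ are inherited from the original templates. The only genuinely new case is the boundary pair $(i,j)=(z_1,z_1+1)$: here each $G[A_{z_1,g},B_{z_1+1,h}]$ is a complete bipartite graph (a subgraph of $A^{(1)}\times B^{(2)}$), and any complete bipartite graph contains neither an induced $(n-1)K_2$ nor an induced $\overline{mK_2}^{bip}$, since both forbidden graphs have at least one non-edge. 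Thus (***) holds at the boundary, and (****) is vacuous there since $j\neq i$.

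No genuine obstacle arises; this is a direct bookkeeping verification. The conceptual content is simply that the skew-join operation places its "full" side ($A^{(1)}\times B^{(2)}$) and its "empty" side ($A^{(2)}\times B^{(1)}$) exactly where the chain template wants them, namely between non-consecutive bags whose indices are far apart.
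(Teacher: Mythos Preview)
Your argument is correct and is exactly the paper's approach—concatenate the two partitions with $G_1$'s bags listed first—just fleshed out in detail where the paper says only that ``the union of the two partitions is the required partition''. One minor quibble: your justification that a complete bipartite graph avoids $(n-1)K_2$ because the latter ``has at least one non-edge'' fails when $n=2$ (then $(n-1)K_2=K_2$ is a single edge with no bipartite non-edge); this does not matter for the paper, since the observation is only invoked in the inductive step with $n,m\geq 3$, but strictly speaking your boundary check for (***) needs $n\geq 3$.
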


\begin{proof}
It is easy to see that the union of the two partitions of $G_1$ and $G_2$ is the required partition for $G_1 \oslash G_2$.
\end{proof}

The following procedure describes our construction of chain templates.

\begin{lemma} \label{generictemplate}
Let $G=(A, B, \mathcal{E})$ be a bipartite skew-join prime graph with $B \neq \emptyset$. Consider the following procedure:
\begin{itemize}
\item Let $b_1 \in B$ be a vertex of minimal degree in $B$ and define $B_1:=\{b_1\}$.
\item Let $A_1:=N(B_1)$. 
\end{itemize}
Starting with $i=1$, we define further subsets $A_{i+1}$ and $B_{i+1}$ as follows:
\begin{itemize}
\item[(1)] Let $B_{i+1}:=(\overline{N}(A_i) \cap B) \backslash \cup_{h=1}^{i}B_h$. If $B_{i+1} \neq \emptyset$, go to (2), 
otherwise stop the procedure.  
\item[(2)] Let $A_{i+1}:= (N(B_{i+1}) \cap A) \backslash \cup_{h=1}^{i}A_h$. Increase the value $i:=i+1$ and go to (1).
\end{itemize}

Suppose the procedure stops at some $i=z$, i.e. $B_{z+1}=\emptyset$. Then the partition $A=\cup_{i=1}^z A_ i$,  $B=\cup_{i=1}^z B_i$ is a chain template of $G$. 
\end{lemma}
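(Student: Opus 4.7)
The plan is to verify the two properties required by Definition~\ref{templatedef}: first that the constructed sets partition $A$ and $B$, and second that the join/co-join conditions (*) and (**) hold between the bags. The conditions (*) and (**) are essentially bookkeeping and I expect them to follow directly from the definitions. For (**), if some $v\in A_i$ were adjacent to $w\in B_j$ with $j<i$, then $v\in N(B_j)\cap A$; since $v\in A_i$ forces $v\notin A_1\cup\cdots\cup A_{j-1}$, the defining equation for $A_j$ would already place $v$ into $A_j$, contradicting $v\in A_i$. For (*), a non-neighbour $w\in B_j$ of some $v\in A_i$ with $j>i+1$ lies in $\overline{N}(A_i)\cap B$, so $w$ would have been captured by $B_{i+1}$ (or an earlier $B_h$), contradicting $w\in B_j$.

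The substantive part is proving that the algorithm actually exhausts $A$ and $B$ on termination. I would write $A''=\bigcup_{i=1}^z A_i$, $B''=\bigcup_{i=1}^z B_i$, and let $A'=A\setminus A''$, $B'=B\setminus B''$, and then establish two uniformity statements. First, every $w\in B'$ is joined to each $A_i$: for $i<z$ any non-neighbour inside $A_i$ would force $w\in B_{i+1}\subseteq B''$, and for $i=z$ the stopping rule $B_{z+1}=\emptyset$ plays the same role. Second, every $v\in A'$ is co-joined to each $B_j$, since a neighbour of $v$ inside $B_j$ would place $v$ into $A_j$ by the defining equation for $A_j$. Together these two statements say exactly that $G$ decomposes as the skew-join $G[A'',B'']\oslash G[A',B']$.

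At this point the skew-join primeness hypothesis is invoked, and this is the conceptual heart of the lemma. Since $b_1\in B_1\subseteq B''$ the left factor is nonempty, so primeness forces $G[A',B']$ to be the empty graph, i.e.\ $A'=B'=\emptyset$, which gives the desired covering. The subtlety I expect to be the main obstacle is recognising that the termination criterion mentions only $B$, so completeness on the $A$-side is not automatic from the procedure alone and must be extracted via skew-join primeness rather than from the construction itself; once that observation is made, the argument closes cleanly.
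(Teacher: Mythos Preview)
Your proposal is correct and follows essentially the same approach as the paper: both verify (*) and (**) by the straightforward bookkeeping you describe, and both establish that $\bigcup A_i$ is joined to $B\setminus\bigcup B_i$ while $\bigcup B_i$ is co-joined to $A\setminus\bigcup A_i$, then invoke skew-join primeness (using $b_1\in B''$) to force the leftover parts to be empty. Your explicit remark that termination is only stated on the $B$-side, so coverage of $A$ must come from primeness rather than the procedure itself, is exactly the point the paper leaves implicit.
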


\begin{proof}

In this procedure we start by creating $B_1$, then we follow by $A_1, B_2, A_2, B_3, A_3$ and continue in alternating order. We need to show that this is a chain template, i.e. we need to show that this partition satisfies conditions (*) and (**) of Definition~\ref{templatedef}.  

We start by looking at the construction of bags $A_i$. The bag $A_1$ consists of the neighbourhood of $B_1$, $A_2$ consist of all the neighbourhood of $B_2$ not in $A_1$, $A_3$ - all the neighbourhood of $B_3$ that is not in $A_1 \cup A_2$, etc. As $A_1$ includes the neighbourhood of $B_1$, it follows that $B_1$ is co-joined to all the other bags $A_2, A_3, \ldots$ we will create in the future. Similarly, as $A_2$ contains all the new neighbourhood of $B_2$ not contained in $A_1$, $B_2$ will be cojoined to all the bags $A_3, A_4 \ldots$, we will create in the future. Thus, by construction, we can deduce that $B_j$ will be disjoint from all $A_i$ for all $j<i$, hence condition (**) holds. 

The construction of bags $B_i$ is similar, but this time we look at non-neighbourhood extensions. The bag $B_2$ consists of all non-neighbours of $A_1$ that do not belong to $B_1$, $B_3$ consists of all new non-neighbours of $A_2$, that do not belong to $B_1 \cup B_2$, and so on. Thus we conclude that $A_1$ will be joined to all the subsequent bags $B_3, B_4, B_5, \ldots$. Similarly, $A_2$ is joined to all bags $B_4, B_5, \ldots$, we will create in the future. Thus, we deduce that $A_i$ is joined to $B_j$ for all $j>i+1$, hence (*) holds.    

Suppose the procedure ended at some value $i=z$, i.e. $B_{z+1}=\emptyset$. By construction, it is not hard to note that $G[\cup_{i=1}^z A_i, \cup_{i=1}^z B_i]$ is skew-joined to the remaining graph $G[(A \backslash \cup_{i=1}^z A_i), (B \backslash \cup_{i=1}^z B_i])$. Indeed, by following the reasoning in the above two paragraphs, one may obtain that $\cup_{i=1}^z B_i$ is co-joined to $A \backslash \cup_{i=1}^z A_i$ and $\cup_{i=1}^z A_i$ is joined to $B \backslash \cup_{i=1}^{z+1} B_i = B \backslash \cup_{i=1}^{z} B_i$ as $B_{z+1}=\emptyset$. By our assumption that $G$ is skew-join prime, we conclude that the graph on the remaining vertices not covered by partition procedure $G[(A \backslash \cup_{i=1}^z A_i), (B \backslash \cup_{i=1}^z B_i)]$  must be empty. Hence, $A=\cup_{i=1}^z A_i$ and $B=\cup_{i=1}^z B_i$, and as (*) and (**) are satistied, this partition is a chain template of $G$. 

\end{proof}

Now we will prove structural results that concern consecutive bags of the chain template of $(nK_2, \overline{mK_2}^{bip})$-free graphs. Our target will be to refine the chain template constructed above to obtain $(n,m,q)$-chain template. Some of the critical observations that we use are as follows: 

\begin{observation}\label{simple1}
If a bipartite graph $G$ with parts $A$ and $B$ is $nK_2$-free, then there is a set $S \subseteq A$ with at most $|S| \leq n-1$ vertices such that $N(S)=N(A)$, i.e. such that $S$ covers the neighbourhood of $A$.  
\end{observation}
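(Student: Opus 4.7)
The plan is to take $S \subseteq A$ to be a subset-minimal set satisfying $N(S) = N(A)$ and to show that $|S| \leq n-1$ by producing an induced matching of size $|S|$ inside $G$. Such a minimal $S$ exists trivially: the full set $A$ satisfies $N(A) = N(A)$, and we can iteratively discard any vertex whose removal does not shrink the neighborhood until no further removal is possible.

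The key step is to exploit minimality to assign to each $a \in S$ a \emph{private neighbor} $b_a \in B$. Since $S$ is minimal, $N(S \setminus \{a\}) \subsetneq N(S) = N(A)$, so there exists some $b_a \in N(a) \setminus N(S \setminus \{a\})$. By construction $b_a$ is adjacent to $a$ but to no other vertex of $S$.

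I would then observe that $\{ab_a : a \in S\}$ forms an induced matching respecting the bipartition. First, the vertices $b_a$ are pairwise distinct: for $a \neq a'$ in $S$, $b_a$ is adjacent to $a$ while $b_{a'}$ is not, so $b_a \neq b_{a'}$. Second, for distinct $a, a' \in S$ the edge $ab_a$ and the edge $a'b_{a'}$ induce no further edge, since $b_{a'}$ is non-adjacent to $a$ and $b_a$ is non-adjacent to $a'$ by the defining property of private neighbors. Hence $G$ contains an induced $|S|\,K_2$, and the hypothesis that $G$ is $nK_2$-free immediately gives $|S| \leq n-1$, as required.

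The only subtle point, and what I expect to be the main thing to get right, is that the $nK_2$-free hypothesis is about induced bipartite subgraph containment. A naive greedy enlargement of $N(S)$ would only produce a (not necessarily induced) matching and would not immediately contradict $nK_2$-freeness; the minimality-based choice of private neighbors is exactly what upgrades the matching to an induced one. Once that trick is in place, the proof is essentially the three short paragraphs above.
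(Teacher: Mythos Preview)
Your proof is correct and is essentially identical to the paper's own argument: both take a subset-minimal $S \subseteq A$ with $N(S)=N(A)$, use minimality to pick a private neighbor for each element of $S$, and observe that these pairs form an induced matching forcing $|S|\leq n-1$. Your additional remarks on why the matching is induced and why minimality (rather than greedy enlargement) is needed are accurate and make the argument slightly more explicit than the paper's version.
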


\begin{proof}
Start with $S:=A$. If there is an $a \in S$ such that $N(a) \subseteq N(S \backslash \{a\})$, then set $S:=S \backslash \{a\}$. 
Repeat this process until it is not possible remove a vertex from $S$. Clearly $N(S)=N(A)$ throughout the process.
Also, it is clear that at the end of the process, for every $a \in S$ the set $N(a) \backslash N(S \backslash \{a\}) \neq \emptyset$ as otherwise $a$ could be removed from $S$. Hence for every $a \in S$, we can pick $a' \in N(a) \backslash N(S \backslash \{a\})$. These pairs $\{aa': a \in S\}$ induce a matching, thus $|S| \leq n-1$ and we are done.
\end{proof}

\begin{observation}\label{simple2}
[Complementary statement to the previous one]
If the bipartite graph $G$ with parts $A$ and $B$ is $\overline{mK_2}$-free, then there is a set $S \subset A$ with at most $|S| \leq m-1$ vertices such that $\overline{N}(S)=\overline{N}(A)$, i.e. such that $S$ co-covers the non-neighbourhood of $A$. 
\end{observation}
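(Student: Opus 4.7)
The statement is the exact bipartite-complement dual of Observation~\ref{simple1}, so the plan is to reduce to that observation rather than rerun the greedy argument. Concretely, I would pass to the bipartite complement $\overline{G}^{bip}=(A,B,A\times B\setminus \mathcal{E})$ and apply Observation~\ref{simple1} there.

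First I would record the basic translation: for any vertex $v\in A$, the neighbourhood of $v$ in $\overline{G}^{bip}$ (restricted to $B$) is exactly the set of vertices of $B$ that are non-adjacent to $v$ in $G$, i.e.\ $N_{\overline{G}^{bip}}(v)=B\setminus N_G(v)$. Consequently, for any subset $V_1\subseteq A$, one has
\[
N_{\overline{G}^{bip}}(V_1)\;=\;\bigcup_{v\in V_1}\bigl(B\setminus N_G(v)\bigr)\;=\;\overline{N}_G(V_1),
\]
using the Definition~\ref{basicdef} convention that $\overline{N}_G(V_1)$ collects those vertices of $B$ with at least one non-neighbour in $V_1$.

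Next I would observe that a bipartite matching $mK_2$ appears as an induced subgraph of $\overline{G}^{bip}$ precisely when $G$ contains $\overline{mK_2}^{bip}$ as an induced bipartite subgraph. Hence the hypothesis that $G$ is $\overline{mK_2}^{bip}$-free is equivalent to $\overline{G}^{bip}$ being $mK_2$-free. Applying Observation~\ref{simple1} to the bipartite graph $\overline{G}^{bip}$ with parts $A$ and $B$ now furnishes a set $S\subseteq A$ with $|S|\le m-1$ and $N_{\overline{G}^{bip}}(S)=N_{\overline{G}^{bip}}(A)$. Translating both sides through the identity above gives $\overline{N}_G(S)=\overline{N}_G(A)$, i.e.\ $S$ co-covers the non-neighbourhood of $A$ in $G$, which is the desired conclusion.

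There is really no obstacle here beyond being careful about which bipartition side we are talking about: the two observations are strictly dual under bipartite complementation, and the greedy argument that proved Observation~\ref{simple1} (iteratively deleting a vertex from $S$ whose neighbourhood is already covered by $S\setminus\{v\}$, and extracting at the end a matching witnessing minimality) transports verbatim to non-neighbourhoods if one prefers a direct proof. Either formulation produces the bound $|S|\le m-1$.
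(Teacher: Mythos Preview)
Your proposal is correct and matches the paper's intent exactly: the paper gives no separate proof for this observation, labelling it simply as the ``complementary statement'' to Observation~\ref{simple1}, which is precisely the bipartite-complement duality you spell out. Your reduction via $\overline{G}^{bip}$ is the natural way to make that one-line remark rigorous, and your care in translating $N_{\overline{G}^{bip}}(\cdot)$ to $\overline{N}_G(\cdot)$ using Definition~\ref{basicdef} is appropriate.
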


The following lemma provides us with the required $(n, m, q)$ - chain template.

\begin{lemma}\label{nmchaintemplate}
Every bipartite graph $G = (A, B, \mathcal{E})  \in Free(nK_2, \overline{mK_2}^{bip})$ admits an $(n,m,q)$ - chain template for $q=(n-1)(m-1)$. \end{lemma}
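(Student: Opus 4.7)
The plan is to combine the chain template provided by Lemma~\ref{generictemplate} with a refinement of each bag into $q=(n-1)(m-1)$ pieces based on small witness sets extracted via Observations~\ref{simple1} and~\ref{simple2}. By Observation~\ref{obs1}, I may assume without loss of generality that $G$ is skew-join prime; otherwise I decompose $G=G_1\oslash G_2$, construct $(n,m,q)$-templates for each factor, and combine them. Apply Lemma~\ref{generictemplate} to obtain a chain template $A=A_1\cup\cdots\cup A_z$, $B=B_1\cup\cdots\cup B_z$ satisfying (*) and (**).

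For each bag $A_i$ I extract two witness sets. Since every $a\in A_i$ has a neighbour in $B_i$ by the construction of the chain template and $G[A_i,B_i]$ is $nK_2$-free, Observation~\ref{simple1} applied with the parts swapped produces a set $U_i\subseteq B_i$ with $|U_i|\leq n-1$ such that every vertex of $A_i$ has a neighbour in $U_i$. Since every $b\in B_{i+1}$ has a non-neighbour in $A_i$ and the bipartite complement of $G[A_i,B_{i+1}]$ is $mK_2$-free, Observation~\ref{simple2} similarly produces $V_i\subseteq B_{i+1}$ with $|V_i|\leq m-1$ such that every $a\in A_i$ that has any non-neighbour in $B_{i+1}$ has a non-neighbour in $V_i$. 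I refine $A_i$ by classifying each of its vertices by a chosen pair (neighbour in $U_i$, non-neighbour in $V_i$), producing at most $(n-1)(m-1)$ non-trivial pieces; vertices of $A_i$ complete to $B_{i+1}$ can be absorbed into any piece since their contribution to $G[A_{ig},B_{(i+1)h}]$ is a complete bipartite graph, which has no induced $2K_2$. Each $B_i$ is refined symmetrically using neighbour witnesses in $A_i$ and non-neighbour witnesses in $A_{i-1}$.

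I then verify conditions (***) and (****) by contradiction. For (***) on a consecutive pair $(A_{ig},B_{(i+1)h})$, suppose an induced $(n-1)K_2$ matching $(a_j,b_j)_{j=1}^{n-1}$ exists in the piece, and let $v\in V_i$ be the common non-neighbour witness shared by $A_{ig}$ and $v'\in V_i'$ the common non-neighbour witness shared by $B_{(i+1)h}$. If $v'v\in\mathcal{E}$, the witness properties force $a_j v\notin\mathcal{E}$ and $v' b_j\notin\mathcal{E}$, so adjoining $(v',v)$ to the matching yields an induced $nK_2$ in $G$, contradicting $G\in Free(nK_2)$. If $v'v\notin\mathcal{E}$, the induced bipartite subgraph on $\{a_1,\ldots,a_{n-1},v'\}$ and $\{b_1,\ldots,b_{n-1},v\}$ has only the matching as edges, so its bipartite complement is $K_{n,n}$ minus a perfect matching and therefore contains an induced $nK_2$, contradicting $\overline{mK_2}^{bip}$-freeness once $n\geq m$. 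Condition (****) is handled symmetrically via the bipartite complement of $G$. The main obstacle I anticipate is the mixed sub-case $v'v\notin\mathcal{E}$ with $n<m$ (and its symmetric counterpart for (****)), where a single chosen representative from the witness set does not supply the $m$ vertex-disjoint non-edges needed in $\overline{G}^{bip}$; there one must exploit the full size-$(m-1)$ witness sets (harvesting additional non-edges from the other elements of $V_i$ and $V_i'$) and also ensure that the chosen witnesses $v,v'$ are distinct from the matching vertices $a_j,b_j$, which can be arranged by assigning the $O(n+m)$ witness vertices to a dedicated boundary piece of the partition.
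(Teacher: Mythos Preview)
Your high-level plan---reduce to skew-join primes, build the chain template via Lemma~\ref{generictemplate}, then refine each bag using small witness sets from Observations~\ref{simple1}--\ref{simple2}---matches the paper, but the verification of (***)--(****) has a real gap. The trouble is the \emph{location} of your witnesses: you take $v\in V_i\subseteq B_{i+1}$ and $v'\in A_i$, both lying inside the pair of bags you are analysing, so you have no control over the edge $v'v$ and are forced into a case split. In the sub-case $v'v\notin\mathcal{E}$ your argument is incorrect, not merely incomplete: the induced bipartite subgraph of $G$ on $\{a_1,\dots,a_{n-1},v'\}\times\{b_1,\dots,b_{n-1},v\}$ has only the $n-1$ edges $a_jb_j$, so every vertex there has degree at most $1$, and hence this subgraph contains no $\overline{mK_2}^{bip}$ for any $m\geq 3$, regardless of whether $n\geq m$. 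Your fallback of ``exploiting the full witness sets'' does not help either, since the remaining elements of $V_i$ are only guaranteed to be non-neighbours of \emph{some} vertex of $A_i$, not of every vertex of the particular piece $A_{ig}$.

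The paper avoids the case split entirely by drawing one witness from a \emph{previous} bag, so that the chain-template conditions (*)--(**) supply the missing adjacency for free. One partitions $A_i$ according to a covering set $b_{i1},\dots,b_{i(n-1)}\in B_i$ (Observation~\ref{simple1}); since $B_i\subseteq\overline N(A_{i-1})$, each $b_{il}$ has a non-neighbour $a\in A_{i-1}$, and by (*) this $a$ is joined to \emph{all} of $B_{i+1}$ while $b_{il}$ is joined to all of $A_{il}$ by construction. The single non-edge $ab_{il}$ then extends any $\overline{(m-1)K_2}^{bip}$ in $G[A_{il},B_{i+1}]$ to an $\overline{mK_2}^{bip}$ in $G$ with no case analysis; the other consecutive pair is handled by the complementary argument. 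The initial pair $(A_1,B_2)$ also needs a separate treatment (there is no $A_0$ to borrow from): here the minimal-degree choice of $b_1$ is used to find covering witnesses in $A_2$, a step you have not addressed.
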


\begin{proof}

First observe that it is enough to show that the statement of the lemma holds for every skew-join prime graph in $Free(nK_2, \overline{mK_2}^{bip})$. Indeed, if we showed for all skew-join prime graphs, by Observation~\ref{obs1} we would be able to extend this to the whole class. So let us assume that $G=(A, B, \mathcal{E}) \in Free(nK_2, \overline{mK_2}^{bip})$ is a skew-join prime graph and that $B \neq \emptyset$ as otherwise the lemma holds trivially. We start by constructing the chain template as described in Lemma~\ref{generictemplate}. We now need to describe the structural properties of the graphs induced by consecutive bags of the chain template and establish the conditions (***)--(****) of Definition~\ref{templatedef}. In the following three paragraphs we will analyse separately the structure of the graphs induced by $G[A_i, B_{i+1}]$ and $G[A_i, B_i]$ for $i \geq 2$ together with $G[A_1,B_2]$. This covers the structure of all the consecutive bags, except for $G[A_1, B_1]$ for which the structure is just a trivial join. The reader is advised to read the proofs below in conjunction with Figure~\ref{proofchainab}. 

First of all, consider the graph $G[A_i, B_i]$ for some $2 \leq i \leq z$ (see Figure~\ref{proofchainab} (b)). 
By construction described in Lemma~\ref{generictemplate}, $N(B_i)$ covers $A_i$ and the graph $G[A_i, B_i]$ is $nK_2$-free. Hence, by Observation~\ref{simple1}, we can deduce that 
there are $n-1$ vertices $b_{i1}, b_{i2}, \ldots, b_{i(n-1)} \in B_i$ whose neighbourhoods cover $A_i$. We can then 
define $A_{i1}=N(b_{i1}) \cap A_i$ and $A_{il}=(N(b_{il}) \cap A_i) \backslash \cup_{h=1}^{l-1}N(b_{ih})$ for $2 \leq l \leq n-1$.
In this way $A_{i1} \cup A_{i2} \cup \ldots \cup A_{i(n-1)}$ is a subpartition of $A_i$. Morever, we claim that $G[A_{il}, B_{i+1}]$
is $\overline{(m-1)K_2}^{bip}$-free. Indeed, $b_{il} \in B_i \subseteq \overline{N}(A_{i-1})$, hence there is some vertex $a \in A_{i-1}$ such that $ab_{il}$ is a non-edge of $G$. Since $a$ is joined to $B_{i+1}$ and $b_{il}$ is joined to $A_{il}$ and $G$ does not contain a bipartite co-matching on $m$ vertices, we conclude that $G[A_{il}, B_{i+1}] \in Free(\overline{(m-1)K_2}^{bip})$.       
 
Secondly, consider the graph $G[A_{i-1}, B_i]$ for some $2 \leq i \leq z$. Here, by definition of $B_i$, $\overline{N}(A_{i-1}) \supseteq B_i$. 
By Observation~\ref{simple2}, we can deduce that there are $m-1$ vertices 
$a_{(i-1)1}, a_{(i-1)2}, \ldots,$ $a_{(i-1)(m-1)}$ whose non-neighbourhood co-covers $B_i$. 
Now we define $B_{i1}=B_i \cap \overline{N}(a_{(i-1)1})$ and 
$B_{il}=B_i \cap \overline{N}(a_{(i-1)l}) \backslash \cup_{h=1}^{l-1} \overline{N}(a_{(i-1)h})$ for $2 \leq l \leq m-1$. 
This defines a subpartition $B_{i1} \cup B_{i2} \cup \ldots \cup B_{i(n-1)}$ of $B_i$.
Moreover, we claim that $G[B_{il},A_i]$ is $(n-1)K_2$-free. Indeed, $a_{(i-1)l} \in A_{i-1} \subseteq N(B_{i-1})$, hence
there is $b \in B_{i-1}$ which is adjacent to $a_{(i-1)l}$. As $b$ is co-joined 
to $A_i$ and $a_{(i-1)l}$ is co-joined to $B_{il}$, we conclude that $G[B_{il},A_i] \in Free{((n-1)K_2)}$.

We will now show how to partition the part $B_2$ into at most $n-1$ parts such that the induced subgraph between each part and $A_1$ contains no bipartite co-matching of size $m-1$ (see Figure~\ref{proofchainab} (a)). Consider $v \in B_2$. As $B_2$ is 
the non-neighbourhood of $A_1$, we know that $v$ has a non-neighbour in $A_1$, i.e. $N(v) \cap A_1 \neq A_1$. 
Now we will use the fact that $b_1$ is the vertex with minimal degree in $B$ as chosen in our construction in Lemma~\ref{generictemplate}. Since $b_1$ is a vertex with minimal degree in $B$, $v \in B$ has degree at least $|N(b_1)|=|A_1|$. We conclude that $v$ must have a neighbour outside $A_1$, and hence in $A_2$. 
Hence, each vertex of $B_2$ has a neighbour in $A_2$, i.e. $N(A_2) \supseteq B_2$. By Observation~\ref{simple1}, 
we deduce that  there are vertices $\{a_{11}, a_{12}, \ldots, a_{1(n-1)}\} \subseteq A_2$ whose neighbourhood 
covers $B_2$. Define $Y_{21}=B_2 \cap N(a_{11})$, 
$Y_{2l}=(B_2 \cap N(a_{1l})) \backslash \cup_{h=1}^{l-1}N(a_{1h})$ for every $2 \leq l \leq n-1$.
Now, as $b_1a_{1l}$ is a non-edge, $b_1$ is joined to $A_1$ and $a_{1l}$ is joined to $Y_{2l}$, we deduce that $G[Y_{2l}, A_1] \in Free(\overline{(m-1)K_2}^{bip})$ as required. 

Finally, let us put all the subpartitions together. We have the subpartitions $B_i=B_{i1} \cup \ldots \cup B_{i(m-1)}$
and $A_i=A_{i1} \cup A_{i2} \cup \ldots \cup A_{i(n-1)}$ for $i=2, 3, \ldots, z$. 
We also obtained a subpartition $B_2=Y_{21} \cup Y_{22} \cup \ldots \cup Y_{2(n-1)}$.
Since we have two subpartitions of $B_2$, we take a refinement of these, i.e. we subpartition $B_2$
into $(n-1)(m-1)$ parts $Y_{2j} \cap B_{2l}$ for $1 \leq j \leq n-1$ and $1 \leq l \leq m-1$.   
By the discussion in the previous paragraphs it easily follows that this final subpartition into
at most $q=(n-1)(m-1)$ parts satisfies conditions (***) and (****) of Definition~\ref{partition2}. 
(Note, in the definition of $(n,m,q)$-chain template, each part $A_i$ and $B_i$ is partitioned into \emph{exactly} $q$ parts and here we provided a partition into \emph{at most} $q$ sets, but it is easy to fix this by simply adding empty sets to the partition.)

\end{proof}

	\begin{figure}[H]
       	
       	\begin{subfigure}[b]{0.43\textwidth}
					\scalebox{.75}
					{
					\begin{tikzpicture}[scale=.6,auto=left]

%%%%%%%%%%%%%%%%%%%%%%%%%%%%%%%%%%%%%%%%%%%%
		%\draw[step=1cm,gray,very thin] (0,0) grid (13,5);
		%\foreach \x in {0,1,2,3,4,5,6,7,8,9,10,11,12,13}
    		%\draw (\x cm,1pt) -- (\x cm,-1pt) node[anchor=north] {$\x$};
		%\foreach \y in {0,1,2,3,4}
    		%\draw (1pt,\y cm) -- (-1pt,\y cm) node[anchor=east] {$\y$};
		\foreach \start in {}
		{		
		\node[w_vertex] (1) at (\start+0, 2) {}; 	
		\node[w_vertex] (2) at (\start+1, 2) { };
		\node[w_vertex] (3) at (\start+2, 2) { };
		\node[w_vertex] (4) at (\start+3, 2) { };
		\node[w_vertex] (5) at (\start+0,0) { };
		\node[w_vertex] (6) at (\start+1,0) { };
		\node[w_vertex] (7) at (\start+2,0) { };
		\node[w_vertex] (8) at (\start+3,0) { };				
		\foreach \from/\to in {1/5,2/6,3/7,4/8}
	    	\draw (\from) -- (\to);
		%\foreach \from/\to in {5/6,6/7,7/8}
	    	%\draw (\from) -- (\to);		
		}
%%%%%%%%%%%%%%%%%%%%%%%%%%%%%%%%%%%%%%%%%%%%%%%

	\foreach{\h} in {5} 
	{
	%%%%%%%              Ellipses            %%%%%%
		\foreach \start in {0,7}
		{
 		\draw (\start+2,\h) ellipse (3cm and 1cm);
		}
		
		\foreach \start in {0}
		{
 		\draw (\start+6,0) ellipse (3cm and 1cm);
		}

		\draw [dashed] (3, 0) -- (-1, \h);
		\draw [dashed] (9, 0) -- (5, \h);

	%	\draw (13, 0) -- (16, \h);
	%	\draw (19, 0) -- (22, \h);
	%	\draw [dashed] (33, 0) -- (29, \h);
	%	\draw [dashed] (27, 0) -- (23, \h);
		
	%	\draw [dotted] (10, 0) -- (12, 0);
	%	\draw [dotted] (13, \h) -- (15, \h);
	%	\draw [dotted] (11.5, \h*0.5) -- (13.5, \h*0.5);
	%	\draw [dotted] (34, 0) -- (36, 0);	
	%	\draw [dotted] (35, \h*0.5) -- (36, \h*0.5);
		
		\draw (2, \h+1.5) node[anchor=south] {$A_{1}$};
		\draw (2+7, \h+1.5) node[anchor=south] {$A_{2}$};
	%	\draw (3+2+14, \h+1.5) node[anchor=south] {$A_{i-1}$};
	%	\draw (3+2+21, \h+1.5) node[anchor=south] {$A_{i}$};
	%	\draw (3+2+28, \h+1.5) node[anchor=south] {$A_{i+1}$};

	%	\draw (7, \h+0.7) node[anchor=south] {$A_{21}$};
	%	\draw (7+2, \h+0.7) node[anchor=south] {$A_{22}$};
	%	\draw (7+4, \h+0.7) node[anchor=south] {$A_{23}$};

	%	\draw (24, \h-0.3) node[anchor=south] {$A_{i1}$};
	%	\draw (24+2, \h-0.3) node[anchor=south] {$A_{i2}$};
	%	\draw (24+4, \h-0.3) node[anchor=south] {$A_{i3}$};

		\draw (4+2, -1.6) node[anchor=north] {$B_{2}$};
	%	\draw (7+2+7, -1.6) node[anchor=north] {$B_{i-1}$};
	%	\draw (2+21, -1.6) node[anchor=north] {$B_{i}$};
	%	\draw (2+28, -1.6) node[anchor=north] {$B_{i+1}$};

		\draw (4, 0.3) node[anchor=north] {$Y_{21}$};
		\draw (4+2, 0.3) node[anchor=north] {$Y_{22}$};
		\draw (4+4, 0.3) node[anchor=north] {$Y_{23}$};
		
	%	\draw (7+4, -0.7) node[anchor=north] {$B_{31}$};
	%	\draw (7+4+2, -0.7) node[anchor=north] {$B_{32}$};
	%	\draw (7+4+4, -0.7) node[anchor=north] {$B_{33}$};

	%	\draw (21, 0) node[anchor=north] {$B_{i1}$};
	%	\draw (21+2, 0) node[anchor=north] {$B_{i2}$};
	%	\draw (21+4, 0) node[anchor=north] {$B_{i3}$};
		
	%	\draw (28, -0.7) node[anchor=north] {$B_{(i+1)1}$};
	%	\draw (28+2, -0.7) node[anchor=north] {$B_{(i+1)2}$};
	%	\draw (28+4, -0.7) node[anchor=north] {$B_{(i+1)3}$};

	%	\foreach \x/\y in {4/-1.6, 21/-1.6}		
	%	{
	%	\draw [dotted] (\x+0,\y+0) -- (\x+4,\y+0); 
	%	\draw [dashed] (\x+0,\y+0) -- (\x+0, \y+0.15);
	%	\draw [dashed] (\x+2,\y+0) -- (\x+2, \y+0.15);
	%	\draw [dashed] (\x+4, \y+0) -- (\x+4, \y+0.15);
	%	}

	%	\foreach \x/\y in {24/6.65}		
	%	{
	%	\draw [dotted] (\x+0,\y) -- (\x+4,\y); 
	%	\draw [dashed] (\x+0,\y+0) -- (\x+0, \y-0.15);
	%	\draw [dashed] (\x+2,\y+0) -- (\x+2, \y-0.15);
	%	\draw [dashed] (\x+4, \y+0) -- (\x+4, \y-0.15);
	%	}

	%%%%%%           vertex b_1            %%%%%
		\node[w_vertex](1) at (-3, 0) {};
		\draw (1) node[anchor=north] {$b_1$};
		\node[point] (2) at (-1, \h) {};
		\node[point] (3) at (4.6, \h-0.5) {};
		\draw (1) -- (2);
		\draw (1) -- (3);	

	%%%%%%      vertices a_1 --- a_3     %%%%%%

		\node[w_vertex] (4) at (7+2, \h ) {};		
		\node[w_vertex] (5) at (7+2.7, \h) {};
		\node[w_vertex] (6) at (7+4, \h) {};
		\draw (4) node[anchor=south] {$a_{11}$};
		\draw (5) node[anchor=south] {$a_{12}$};
		\draw (6) node[anchor=south] {$a_{13}$};

		\draw  (4) -- (5,1);
		\draw  (4) -- (3,0);
		\draw  (5) -- (5,1);
		\draw  (5) -- (7,1);
		\draw  (6) -- (7,1);
		\draw  (6) -- (8.7,0.5);

		\draw (5,-1) .. controls (5.5,0)  .. (5,1);
		\draw (7,-1) .. controls (8.5,-0.3) and (6.5, 0.3) .. (7,1);

	}
	\end{tikzpicture}
					}
                	\caption{}
                	\label{fig:coP6_fig1}
        	\end{subfigure}%
        ~
        	\begin{subfigure}[b]{0.57\textwidth}
					\scalebox{.75}
					{
              	 	\begin{tikzpicture}[scale=.6,auto=left]

%%%%%%%%%%%%%%%%%%%%%%%%%%%%%%%%%%%%%%%%%%%%
		%\draw[step=1cm,gray,very thin] (0,0) grid (13,5);
		%\foreach \x in {0,1,2,3,4,5,6,7,8,9,10,11,12,13}
    		%\draw (\x cm,1pt) -- (\x cm,-1pt) node[anchor=north] {$\x$};
		%\foreach \y in {0,1,2,3,4}
    		%\draw (1pt,\y cm) -- (-1pt,\y cm) node[anchor=east] {$\y$};
		\foreach \start in {}
		{		
		\node[w_vertex] (1) at (\start+0, 2) {}; 	
		\node[w_vertex] (2) at (\start+1, 2) { };
		\node[w_vertex] (3) at (\start+2, 2) { };
		\node[w_vertex] (4) at (\start+3, 2) { };
		\node[w_vertex] (5) at (\start+0,0) { };
		\node[w_vertex] (6) at (\start+1,0) { };
		\node[w_vertex] (7) at (\start+2,0) { };
		\node[w_vertex] (8) at (\start+3,0) { };				
		\foreach \from/\to in {1/5,2/6,3/7,4/8}
	    	\draw (\from) -- (\to);
		%\foreach \from/\to in {5/6,6/7,7/8}
	    	%\draw (\from) -- (\to);		
		}
%%%%%%%%%%%%%%%%%%%%%%%%%%%%%%%%%%%%%%%%%%%%%%%

	\foreach{\h} in {5} 
	{
	%%%%%%%              Ellipses            %%%%%%
		\foreach \start in {17,24}
		{
 		\draw (\start+2,\h) ellipse (3cm and 1cm);
		}
		
		\foreach \start in {10,17,24}
		{
 		\draw (\start+6,0) ellipse (3cm and 1cm);
		}

	%	\draw [dashed] (3, 0) -- (-1, \h);
	%	\draw [dashed] (9, 0) -- (5, \h);

		\draw (13, 0) -- (16, \h);
		\draw (19, 0) -- (22, \h);
		\draw [dashed] (33, 0) -- (29, \h);
		\draw [dashed] (27, 0) -- (23, \h);
		
	%	\draw [dotted] (10, 0) -- (12, 0);
	%	\draw [dotted] (13, \h) -- (15, \h);
	%	\draw [dotted] (11.5, \h*0.5) -- (13.5, \h*0.5);
	%	\draw [dotted] (34, 0) -- (36, 0);	
	%	\draw [dotted] (35, \h*0.5) -- (36, \h*0.5);
		
	%	\draw (2, \h+1.5) node[anchor=south] {$A_{1}$};
	%	\draw (2+7, \h+1.5) node[anchor=south] {$A_{2}$};
		\draw (3+2+14, \h+1.5) node[anchor=south] {$A_{i-1}$};
		\draw (3+2+21, \h+1.5) node[anchor=south] {$A_{i}$};
	%	\draw (3+2+28, \h+1.5) node[anchor=south] {$A_{i+1}$};

	%	\draw (7, \h+0.7) node[anchor=south] {$A_{21}$};
	%	\draw (7+2, \h+0.7) node[anchor=south] {$A_{22}$};
	%	\draw (7+4, \h+0.7) node[anchor=south] {$A_{23}$};

		\draw (24, \h-0.3) node[anchor=south] {$A_{i1}$};
		\draw (24+2, \h-0.3) node[anchor=south] {$A_{i2}$};
		\draw (24+4, \h-0.3) node[anchor=south] {$A_{i3}$};

	%	\draw (4+2, -1.6) node[anchor=north] {$B_{2}$};
		\draw (7+2+7, -1.6) node[anchor=north] {$B_{i-1}$};
		\draw (2+21, -1.6) node[anchor=north] {$B_{i}$};
		\draw (2+28, -1.6) node[anchor=north] {$B_{i+1}$};

	%	\draw (4, 0.3) node[anchor=north] {$Y_{21}$};
	%	\draw (4+2, 0.3) node[anchor=north] {$Y_{22}$};
	%	\draw (4+4, 0.3) node[anchor=north] {$Y_{23}$};
		
	%	\draw (7+4, -0.7) node[anchor=north] {$B_{31}$};
	%	\draw (7+4+2, -0.7) node[anchor=north] {$B_{32}$};
	%	\draw (7+4+4, -0.7) node[anchor=north] {$B_{33}$};

		\draw (21, 0) node[anchor=north] {$B_{i1}$};
		\draw (21+2, 0) node[anchor=north] {$B_{i2}$};
		\draw (21+4, 0) node[anchor=north] {$B_{i3}$};

	\foreach{\z} in {17} 
	{
		\node[w_vertex] (21) at (\z+0.3, \h ) {};		
		\node[w_vertex] (22) at (\z+2.7, \h) {};
		\node[w_vertex] (23) at (\z+4.5, \h) {};
		\draw (21) node[anchor=south] {$a_{(i-1)1}$};
		\draw (22) node[anchor=south] {$a_{(i-1)2}$};
		\draw (23) node[anchor=south] {$a_{(i-1)3}$};

		\draw [dashed] (21) -- (\z+5,1);
		\draw [dashed] (21) -- (\z+3,0);
		\draw [dashed] (22) -- (\z+5,1);
		\draw [dashed] (22) -- (\z+7,1);
		\draw [dashed] (23) -- (\z+7,1);
		\draw [dashed] (23) -- (\z+8.7,0.5);

		\draw (\z+5,-1) .. controls (\z+5.5,0)  .. (\z+5,1);
		\draw (\z+7,-1) .. controls (\z+8.5,-0.3) and (\z+6, 0.3) .. (\z+7,1);

	%%%%%       vertices b_i1 --- b_i3     %%%%%%%

		\node[w_vertex] (24) at (\z+3+2,  0.5) {};		
		\node[w_vertex] (25) at (\z+3+4.5, 0.7) {};
		\node[w_vertex] (26) at (\z+3+5.4, 0.5) {};
		\draw (24) node[anchor=north] {$b_{i1}$};
		\draw (25) node[anchor=north] {$b_{i2}$};
		\draw (26) node[anchor=north] {$b_{i3}$};

		\draw  (25) -- (\z+3+5,\h-1);
		\draw  (25) -- (\z+3+7,\h-1);
		\draw  (26) -- (\z+3+7,\h-1);
		\draw  (26) -- (\z+3+8.7,\h-0.5);
		\draw  (24) -- (\z+3+5,\h-1);
		\draw  (24) -- (\z+3+3,\h);

		\draw (\z+3+5,\h-1) .. controls (\z+3+5.5,\h)  .. (\z+3+5,1+\h);
		\draw (\z+3+7,\h-1) .. controls (\z+3+8.5,\h-0.3) and (\z+3+6.5, 0.3+\h) .. (\z+3+7,1+\h);

		%%%%%%      vertices a_i1 --- a_i3     %%%%%%

%		\node[w_vertex] (27) at (\z+2+7, \h ) {};		
%		\node[w_vertex] (28) at (\z+2.7+7, \h) {};
%		\node[w_vertex] (29) at (\z+4+7, \h) {};
%		\draw (27) node[anchor=south] {$a_{i1}$};
%		\draw (28) node[anchor=south] {$a_{i2}$};
%		\draw (29) node[anchor=south] {$a_{i3}$};

%		\draw [dashed] (27) -- (\z+7+5,1);
%		\draw [dashed] (27) -- (\z+7+3,0);
%		\draw [dashed] (28) -- (\z+7+5,1);
%		\draw [dashed] (28) -- (\z+7+7,1);
%		\draw [dashed] (29) -- (\z+7+7,1);
%		\draw [dashed] (29) -- (\z+7+8.7,0.5);

%		\draw (\z+7+5,-1) .. controls (\z+7+5.5,0)  .. (\z+7+5,1);
%		\draw (\z+7+7,-1) .. controls (\z+7+8.5,-0.3) and (\z+7+6.5, 0.3) .. (\z+7+7,1);
	}

	}
	\end{tikzpicture}
	
					}
                	\caption{}
                	\label{fig:coP6_fig2}
        	\end{subfigure}

       	\caption{Proof of Lemma~\ref{nmchaintemplate}}\label{proofchainab}
	\end{figure}

Using the Lemmas above we are now ready to conclude that Theorem~\ref{bipartitepartition} holds.

% can be partitioned into $2(n-1)(m-1)$ pieces such that
% between the pieces is a graph from $X_{n-1,m}=Free((n-1)K_2, \overline{mK_2}^{bip})$ or 
% $X_{n, m-1}=Free(nK_2, \overline{(m-1)K_2})$.

%Hence, we obtained that every bipartite graph $G \in Free(nK_2, \overline{mK_2})$ has a partition satisfying 
% conditions of  Lemma~\ref{partition2} with $t=(n-1)(m-1)$. By Lemma~\ref{partition2} it follows that each such 
%graph is partitionable into $2(n-1)(m-1)$ sets such that between the sets the graph is either in 
%$Free((n-1)K_2, \overline{mK_2})$ or in $Free(nK_2, \overline{(m-1)K_2})$ and hence we are done.

\begin{proof}[Proof of Theorem~\ref{bipartitepartition}]
We will use induction on $n+m$. 
For $m \leq 2$ or $n \leq 2$, the graph is $2K_2$-free, so we can set $f(n,m)=1$. 
Suppose $m, n \geq 3$ and we proved the theorem for all pairs of integers with the sum smaller than $n+m$. 
Let $G$ be a bipartite graph in $Free(nK_2, \overline{mK_2}^{bip})$. By Lemma~\ref{nmchaintemplate}, we know that $G$ admits an $(n,m,q)$-chain template with $q=(n-1)(m-1)$. Hence by Lemma~\ref{partition2} $G$ can be partitioned into $2q=2(n-1)(m-1)$ parts such that between the parts, the graph is
either in $Free((n-1)K_2, \overline{mK_2})$ or in $Free(nK_2, \overline{(m-1)K_2})$. By induction, each pair of bags could be subpartitioned into $\max\{f(n, m-1), f(n-1, m)\}$ parts, with
$2K_2$-free graphs between them. To obtain the final partition, we just take the refinement of all these partitions. As each bag is partitioned into $\max\{f(n-1, m), f(n, m-1)\}$ parts with respect to each of the $2(n-1)(m-1)$ bags on the opposite side of the bipartition, this gives us $f(n,m) \leq 2(m-1)(n-1) \max\{f(n, m-1), f(n-1, m)\}^{2(m-1)(n-1)}$. This proves the existance of the number $f(n,m)$ and hence finishes the proof of the theorem. 

%$= 2^{2^{2(m+n) log(2mn) log(log(2mn)) }}$. 
\end{proof}

From the inductive formula provided, one can obtain a doubly exponential bound $f(n,m) \leq (2mn)^{(2mn)^{2(m+n)}}$. Though this general bound might look rather large for practical applications, we note that a single induction step gives a partition into only $2(n-1)(m-1)$ parts. This is a much smaller number and could be used to obtain better bounds for small $n$ and $m$ or given a graph with a large matching/co-matching obtain a partition not containing some smaller matching/co-matching between the parts (several inductive steps performed, instead of running until $2K_2$-free graphs are obtained). For example, with $n = m = 3$, noting that $\overline{3K_2}^{bip}=C_6$ - a cycle on 6 vertices, our result yields a partition into $2(m-1)(n-1)=8$ parts:

\begin{corollary}
Every $(3K_2, C_6)$-free bipartite graph has a partition of each part into 8 sets such that between the sets the 
induced graph is $2K_2$-free. 
\end{corollary}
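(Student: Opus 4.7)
The plan is to derive the corollary directly by a single application of the machinery built up in this section, rather than by running the inductive recursion of Theorem~\ref{bipartitepartition} to completion (which would produce a much larger bound).

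First I would verify the bipartite-complement identities that make $n = m = 3$ a borderline case. The bipartite complement of the matching $\{a_1b_1, a_2b_2, a_3b_3\}$ consists of the six remaining pairs, which form the 6-cycle $a_1b_2a_3b_1a_2b_3a_1$; hence $\overline{3K_2}^{bip} \cong C_6$, so the hypothesis of the corollary is exactly that $G \in Free(3K_2, \overline{3K_2}^{bip})$. Similarly, the bipartite complement of $\{a_1b_1, a_2b_2\}$ is $\{a_1b_2, a_2b_1\}$, so $\overline{2K_2}^{bip} \cong 2K_2$ as bipartite graphs.

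Next I would apply Lemma~\ref{nmchaintemplate} with $n = m = 3$ to produce a $(3,3,q)$-chain template of $G$ with $q = (n-1)(m-1) = 4$, and then feed this into Lemma~\ref{partition2}. The latter yields a partition of each side of $G$ into $2q = 8$ pieces such that the induced bipartite graph between any two pieces belongs to $Free((n-1)K_2, \overline{mK_2}^{bip}) = Free(2K_2, C_6)$ or to $Free(nK_2, \overline{(m-1)K_2}^{bip}) = Free(3K_2, \overline{2K_2}^{bip})$.

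Finally I would note that both of these containment classes coincide with $Free(2K_2)$: the first trivially, since excluding $2K_2$ already forbids $2K_2$; the second because the identity $\overline{2K_2}^{bip} \cong 2K_2$ from the first step gives $Free(3K_2, \overline{2K_2}^{bip}) = Free(3K_2, 2K_2) = Free(2K_2)$. Thus between every pair of the eight parts the induced bipartite subgraph is $2K_2$-free, as required. There is no real obstacle here beyond the two bipartite-complement identifications; the whole point is that after a single induction step one side of the recursion has dropped to $n = 2$ (or $m = 2$), which is precisely the base case where the remaining graph is already $2K_2$-free.
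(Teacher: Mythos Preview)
Your proposal is correct and follows essentially the same approach as the paper: the corollary is stated there immediately after the remark that a single induction step yields a partition into $2(n-1)(m-1)$ parts, and with $n=m=3$ this is $8$; your write-up just spells out carefully the two bipartite-complement identifications $\overline{3K_2}^{bip}\cong C_6$ and $\overline{2K_2}^{bip}\cong 2K_2$ that make the base case kick in after one step.
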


\section{General graphs without a matching and complements of a matching} \label{section:matchingg}

We say that a graph $G$ has a cochromatic number at most $z$ if $V(G)$ can be partitioned into $z$ sets, each of them being either independent or a clique.
We will first prove that each graph in $Free(nK_2, \overline{mK_2})$ has cochromatic number at most $z(n,m)$ for some function depending only on $n$ and $m$ (here $\overline{mK_2}$ denotes the complement of $mK_2$, not the bipartite complement).  This gives us partition of a graph into finitely many cliques and independent sets. Then we use the previous section to conclude that the parts can be subdivided into finitely many pieces to ensure that the bipartite graph induced between the any pair of these pieces is $2K_2$-free, and hence giving us the desired partition. We start with the base case.

\begin{lemma} \label{cochrom1}
Any graph $G \in Free(2K_2, C_4)$ has cochromatic number at most 3. 
\end{lemma}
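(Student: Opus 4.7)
The plan is to split into two cases according to whether $G$ contains an induced $C_5$ and to handle each case separately.

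First, suppose $G$ has no induced $C_5$. Then $G$ is $(2K_2, C_4, C_5)$-free, which is the classical Földes--Hammer characterisation of \emph{split graphs}: such a graph admits a partition of its vertex set into a clique and an independent set, giving cochromatic number at most $2$. I would either invoke this result or reprove it briefly by greedily taking a maximum clique $K$ and arguing that the remaining vertices must be pairwise non-adjacent, using the three forbidden subgraphs to rule out any edge in $V(G) \setminus K$.

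Now suppose $G$ contains an induced $C_5$ on vertices $v_1, v_2, v_3, v_4, v_5$ (in cyclic order). The main step is to show that every $v \in V(G) \setminus \{v_1, \ldots, v_5\}$ must be adjacent to \emph{all} five $v_i$. I would do a case analysis on $|N(v) \cap \{v_1, \ldots, v_5\}|$. If $v$ has zero or one neighbour on the cycle, an edge of $C_5$ disjoint from $N(v)$ together with any other edge gives an induced $2K_2$. If $v$ has exactly two neighbours they must be consecutive on the cycle (otherwise $2K_2$), but then the opposite edge of $C_5$ together with $v$'s edge yields a $2K_2$ as well. For exactly three neighbours, up to rotational symmetry the pattern is either three consecutive (say $v \sim v_1, v_2, v_3$, giving the $2K_2$ formed by $vv_2$ and $v_4v_5$) or of the form $v \sim v_1, v_3, v_4$ (producing the induced $C_4$ on $\{v, v_1, v_2, v_3\}$, since $vv_2$ and $v_1v_3$ are non-edges). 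Finally, four neighbours, say missing $v_5$, gives the induced $C_4$ on $\{v, v_1, v_5, v_4\}$. Hence the only possibility is that $v$ is adjacent to all of $v_1, \ldots, v_5$.

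Let $X := V(G) \setminus \{v_1, \ldots, v_5\}$. I would next observe that $X$ induces a clique: if $x, y \in X$ were non-adjacent, then since both are fully joined to the $C_5$, the set $\{x, v_1, y, v_3\}$ would form an induced $C_4$. The desired partition is then
\[
V(G) \;=\; \bigl(X \cup \{v_1, v_2\}\bigr) \;\cup\; \{v_3, v_4\} \;\cup\; \{v_5\},
\]
where the first part is a clique (union of the clique $X$, completely joined to $C_5$, with the edge $v_1 v_2$), the second part is the edge $v_3 v_4$, and the third part is a singleton. This exhibits cochromatic number at most $3$. The main obstacle is the neighbourhood case analysis above; once it is complete, the clique property of $X$ and the final three-part partition are immediate.
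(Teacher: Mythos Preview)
Your approach is genuinely different from the paper's and largely works, but there is a gap in the zero-neighbour subcase. The claim that every vertex outside the induced $C_5$ must be adjacent to all five cycle vertices is false: consider $C_5$ together with a universal vertex $w$ and a further vertex $v$ adjacent only to $w$. This graph is $(2K_2,C_4)$-free, yet $v$ has no neighbour on the cycle. Your justification for the $0$-neighbour case (``an edge of $C_5$ disjoint from $N(v)$ together with any other edge gives an induced $2K_2$'') tacitly assumes an edge incident to $v$ that avoids the chosen $C_5$-edge, and no such edge need exist. Consequently, the downstream claim that $X=V(G)\setminus\{v_1,\dots,v_5\}$ is a clique also fails in general.

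The fix is easy. Your analysis for $1,2,3,4$ neighbours is correct, so every vertex outside the $C_5$ has either $0$ or $5$ neighbours on it. The $5$-neighbour vertices do form a clique by your $C_4$ argument, and the $0$-neighbour vertices form an independent set: if $u,v$ both have $0$ neighbours on the cycle and $uv\in E(G)$, then $\{u,v,v_1,v_2\}$ induces $2K_2$. Now take the partition
\[
\bigl(\{5\text{-nbr vertices}\}\cup\{v_1,v_2\}\bigr)\ \cup\ \{v_3,v_4\}\ \cup\ \bigl(\{0\text{-nbr vertices}\}\cup\{v_5\}\bigr),
\]
two cliques and one independent set.

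For comparison, the paper's proof avoids both the $C_5$ case split and the appeal to the F\"oldes--Hammer theorem. It takes a maximum independent set $X$, lets $Y$ be the vertices with exactly one neighbour in $X$ and $Z$ those with at least two, and shows directly that $Y$ and $Z$ are cliques using only $2K_2$- and $C_4$-freeness together with maximality of $X$. This is shorter and fully self-contained; your route is also valid once patched, and has the mild advantage of yielding cochromatic number at most $2$ whenever $G$ is $C_5$-free.
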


\begin{proof}
Let $G$ be a graph in $Free(2K_2, C_4)$ and let $X \subseteq V(G)$ be a set inducing a maximal independent set in $G$.
Split the set of vertices $V(G)\backslash X$ into a set $Y$ of vertices with exactly one neighbour in $X$, and a
 set $Z$ of vertices with at least 2 neighbours in $X$. We claim that both $Y$ and $Z$ are cliques. 
Indeed, take two vertices $y_1, y_2 \in Y$, and let $x_1, x_2 \in X$ be their unique neighbours in $X$, respectively. 
Suppose, for contradiction that $y_1$ and $y_2$ are non-adjacent. If $x_1=x_2=x$, then $(X \backslash x) \cup \{y_1, y_2\}$ is an 
independent set of size larger than $X$ and if $x_1 \neq x_2$, then $G[\{x_1, y_1, x_2, y_2\}]=2K_2$.
In both cases we arrive at contradiction, hence $Y$ is a clique.
Consider now $z_1, z_2 \in Z$, and assume for contradiction that the vertices are non-adjacent. If $z_1$ and $z_2$
has two common neighbours $x_1, x_2 \in X$, then $G[\{x_1, x_2, z_1, z_2\}]=C_4$. 
Otherwise, $z_1$ and $z_2$ has at most one common neighbour, which by definition of $Z$ implies that
they have at least one private neighbour $x_1 \in (N(z_1) \backslash N(z_2)) \cap X$ and $x_2 \in (N(z_2) \backslash N(z_1)) \cap X$.
These vertices induce $G[\{x_1, x_2, z_1, z_2\}]=2K_2$. A contradiction in both cases leads to conclusion that $Z$ is a clique.
Hence any $G \in Free(2K_2, C_4)$ can be partitioned into an independent set and two cliques which means that 
the cochromatic number is at most 3.  
\end{proof}

\begin{lemma}\label{comatching}
For any $n, m \geq 2$, there is an integer $z(n,m)$ such that any graph  
$G \in Free(nK_2, \overline{mK_2})$ has cochromatic number at most $z(n,m)$. Moreover, 
$z(n,m) \leq 3 \times 6^{(m-2)+(n-2)}$.
\end{lemma}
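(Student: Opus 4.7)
The plan is to proceed by induction on $n+m$. The base case $n=m=2$ is precisely Lemma~\ref{cochrom1}, which gives $z(2,2)\le 3 = 3\cdot 6^0$. For the inductive step, observe that taking the complement of $G$ preserves cochromatic number and swaps the roles of $n$ and $m$, so we may assume $n\ge 3$. The target is the recurrence $z(n,m)\le 6\,z(n-1,m)$, which combined with the base case immediately yields $z(n,m)\le 3\cdot 6^{(n-2)+(m-2)}$.

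If $G$ has no edge, it is an independent set and we are done. Otherwise fix an edge $uv\in E(G)$ and partition $V(G)\setminus\{u,v\}$ into the four standard adjacency classes $A=N(u)\cap N(v)$, $B=N(u)\setminus(N(v)\cup\{v\})$, $C=N(v)\setminus(N(u)\cup\{u\})$, and $D=V(G)\setminus(N(u)\cup N(v)\cup\{u,v\})$. The first observation is that $D$ is an independent set, since any edge inside $D$ together with $uv$ would induce a $2K_2$; this contributes a single part to the cochromatic partition.

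The heart of the argument is to show that each of $G[A]$, $G[B]$, $G[C]$, possibly after one further binary split, lies in a smaller class that is handled by the inductive hypothesis. For $G[B]$, the key idea is that an induced $(n-1)K_2$ on $V(M)\subseteq B$ together with an edge $vw$, where $w$ is a neighbor of $v$ disjoint from and non-adjacent to $V(M)$, would extend to an induced $nK_2$ in $G$ (using that $v$ is non-adjacent to every vertex of $B$). So either $G[B]\in Free((n-1)K_2,\overline{mK_2})$ and induction applies directly, or every neighbor of $v$ is dominated by $V(M)$, and we can partition $A\cup C$ into pieces according to which vertex of $V(M)$ dominates it; a dual argument then shows each such piece lies in $Free(nK_2,\overline{(m-1)K_2})$ via $\overline{mK_2}$-freeness. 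The cases of $G[A]$ and $G[C]$ are treated analogously. Putting everything together bounds the cochromatic number of $G$ by $6\,z(n-1,m)$, completing the induction.

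The main obstacle is tuning the sub-partition of $A$, $B$, and $C$ so that the constant comes out to exactly $6$: one likely needs an extremal choice of the initial edge $uv$ (for example, $uv$ in a maximum induced matching of $G$, or $uv$ maximising $|N(u)\cup N(v)|$) so that the extension arguments for both the $nK_2$-side and the $\overline{mK_2}$-side succeed simultaneously, in the spirit of the minimum-degree pivot used in the bipartite chain-template construction of Lemma~\ref{generictemplate}. With a weaker constant the same argument still gives the doubly exponential form of the bound, but achieving the clean factor $6$ is what requires the extremal choice.
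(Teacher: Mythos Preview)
Your proposal has a genuine error and a genuine gap.

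\textbf{The error.} You claim that $D$, the set of common non-neighbours of $u$ and $v$, is an independent set because an edge inside $D$ together with $uv$ would induce a $2K_2$. That is true, but $G$ is only assumed to be $nK_2$-free, and in your inductive step you have $n\ge 3$; a $2K_2$ is perfectly allowed. So $D$ need not be independent. What \emph{is} true is that $G[D]\in Free((n-1)K_2,\overline{mK_2})$, since $D$ is co-joined to the edge $\{u,v\}$; but that costs $z(n-1,m)$ parts, not one.

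\textbf{The gap.} Even after fixing $D$, your single-edge pivot does not cleanly yield six pieces. The set $A=N(u)\cap N(v)$ is joined to the \emph{edge} $\{u,v\}$, which gives no reduction on either side; your workaround (``either $G[B]$ is already $(n-1)K_2$-free, or an $(n-1)K_2$ in $B$ lets us cover $A\cup C$ by $2(n-1)$ pieces joined to a non-edge'') leaves $B$ itself unhandled in the second case and proliferates cases rather than producing six bags. You acknowledge this yourself in the final paragraph; no extremal choice of $uv$ is going to rescue the constant.

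\textbf{What the paper does.} The paper's trick is to pivot not on a single edge but on a $C_4$ or a $2K_2$. If $G$ contains neither, then $G\in Free(2K_2,C_4)$ and Lemma~\ref{cochrom1} finishes. If $G$ contains a $C_4$ on $\{v_1,v_2,v_3,v_4\}$, then every vertex of $G$ is either joined to one of the two non-edges $\{v_1,v_3\},\{v_2,v_4\}$ or co-joined to one of the four edges $\{v_1,v_2\},\{v_2,v_3\},\{v_3,v_4\},\{v_4,v_1\}$; this gives six bags, each in $Free(nK_2,\overline{(m-1)K_2})$ or $Free((n-1)K_2,\overline{mK_2})$, and the recurrence $z(n,m)\le 6\max\{z(n-1,m),z(n,m-1)\}$ follows immediately. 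The $2K_2$ case is the complement. Using four vertices with both edges and non-edges among them is exactly what makes the six-way split work; a single edge cannot play both roles.
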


\begin{proof}
We will prove the statement by induction on $n+m$. By Lemma~\ref{cochrom1}, the result holds for $n=m=2$. Suppose now that $n \geq 2, m \geq 2$ and at least one of the inequalities is strict. Take $G \in Free(nK_2, \overline{mK_2})$.
Then either $G \in Free(2K_2, C_4)$, in which case we are done, or $G$ contains $2K_2$ or $C_4$. 

Suppose first $G$ contains a $C_4$ with vertex set $\{v_1, v_2, v_3, v_4\}$ and edges $\{v_1v_2, v_2v_3, v_3v_4, v_4v_1\}$. Partition $V(G)$ into 6 bags as follows. Place a vertex $v \in V(G)$ in the bag $B_{13}$ if it is adjacent to both $v_1$ and $v_3$ and place a vertex in the bag $B_{24}$ if it is adjacent to both $v_2$ and $v_4$. Further, place
a vertex $v$ in a bag $B_{12}$, $B_{23}$, $B_{34}$ or $B_{41}$ if $v$ is non-adjacent to both $v_1$ and $v_2$,
$v_2$ and $v_3$, $v_3$ and $v_4$, or $v_4$ and $v_1$, respectively. If a vertex is eligible to go to several bags, choose one arbitrarily. It is not hard to see that every vertex will go into some bag. Indeed, if $v$ does not go into bags $B_{13}$ or $B_{24}$, it means that it is non-adjacent to one of $v_1$ and $v_3$
and one of $v_2$ and $v_4$. This means that $v$ is co-joined to $\{v_1, v_2\}$, $\{v_1, v_4\}$, $\{v_3, v_2\}$ or $\{v_3, v_4\}$ and hence goes to one of $B_{12}$, $B_{23}$, $B_{34}$ or $B_{41}$. As $B_{13}$ is joined to $\{v_1, v_3\}$ and $v_1v_3$ is a non-edge, we deduce that $G[B_{13}]$ does not contain 
a comatching of size m-1, i.e. it lies in $Free(nK_2, \overline{(m-1)K_2})$. For the same reason $G[B_{24}] \in Free(nK_2, \overline{(m-1)K_2})$. Similarly, $B_{12}, B_{23}, B_{34}, B_{41}$ are all co-joined to a set that induces a $K_2$, hence the subgraphs induced by these sets lie in $Free((n-1)K_2, \overline{mK_2})$. By induction, each of these bags are partitionable into 
$3 \times 6^{(m-2)+(n-2)-1}$ independent set or cliques. Hence, $G$ has cochromatic number at most $6 \times (3 \times 6^{(m-2)+(n-2)-1})=3 \times 6^{(m-2)+(n-2)}$.
 
A similar argument works in the case when $G$ contains a $2K_2$, because it is the complement of $C_4$. 
Thus the induction proof holds in this case as well.
\end{proof}

We finish this section by adding all the ingredients together to obtain a proof of Theorem~\ref{thm:matchings}.

\begin{proof}[Proof of Theorem~\ref{thm:matchings}]
Let $G \in Free(\F_{n,1})$. From Lemma~\ref{comatching} we know that $G$ has cochromatic number bounded by $z(n,n)$. Consider a partition of $G$ into at most $z(n,n)$ cliques or independent sets. Between every pair of bags the induced bipartite graph must be $\{nK_2, \overline{nK_2}^{bip}\}$-free. Hence by Theorem~\ref{bipartitepartition}
we know it can be subpartitioned into $f(n,n)$ bags such that between the bags the graph is $2K_2$-free. Now each bag can be refined with respect to any other $z(n,n)-1$ bags, so this provides us with $z(n,n)-1$ partitions of each bag into $f(n,n)$ parts. Taking the refinement (intersection) of all such partitions on each bag we have a partition of each bag into $f(n,n)^{z(n,n)-1}$ parts. It is clear that each bag of the refined partition consists of cliques and independent sets and between the bags the graph is $2K_2$-free. Thus we obtained the desired partition with at most $z(n,n)f(n,n)^{z(n,n)}$ bags. Inserting the proved bounds for $z(n,n)$ and $f(n,n)$, we get a doubly exponential bound for $T(n)$.  
\end{proof}

\section{Bipartite graphs without a union of stars and a bipartite complement of a union of stars} \label{section:stars}

Our main structural result of this section is the following:

\begin{theorem} \label{thm:stars}
Let $G=(A, B, \mathcal{E})$ be a bipartite $(n'\up_k, n''\Lambda_k, \overline{m'\up_k}^{bip}, \overline{m''\Lambda_k}^{bip})$-free graph and let $\mu=n' + n'' + m' + m''$. 
Then there is a constant $U=U(\mu ,k)$ and a partition
$A=A_1 \cup A_2 \cup \ldots \cup A_u$, 
$B=B_1 \cup B_2 \cup \ldots \cup B_u$, with $u \leq U$ such that
for any $1 \leq i, j \leq u$ we have $G[A_i, B_j]$ is $2 \Lambda_{2k-1}$-free and $2 \up_{2k-1}$-free.
\end{theorem}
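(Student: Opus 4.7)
The plan is to prove Theorem~\ref{thm:stars} by induction on $\mu = n' + n'' + m' + m''$, following closely the argument developed for matchings and co-matchings in Theorem~\ref{bipartitepartition}, and replacing the key covering observations by their star analogs.

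For the base case (when at least one of the parameters is $\leq 1$, so that the corresponding forbidden star is already contained in $2\Lambda_{2k-1}$ or $2\up_{2k-1}$), the hypothesis itself yields the conclusion with a single bag $u=1$. For the inductive step, I would first reduce to the skew-join prime case via an analog of Observation~\ref{obs1}, and then apply the generic procedure of Lemma~\ref{generictemplate} verbatim to obtain a chain template $A = A_1 \cup \cdots \cup A_z$, $B = B_1 \cup \cdots \cup B_z$ satisfying conditions (*) and (**); these conditions are properties of the template itself and do not depend on which induced subgraphs are forbidden.

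The main technical step is then to refine the chain template so that on each pair of consecutive bags the subpartition strictly decreases at least one of the parameters $n', n'', m', m''$. To this end I would prove four star analogs of Observations~\ref{simple1} and~\ref{simple2}. The prototype is: if a bipartite graph $H=(X,Y,E)$ is $n\up_k$-free (with $\up_k$ centered in $Y$), then a greedy argument produces $S \subseteq Y$ with $|S| \leq n-1$ such that every $y \in Y \setminus S$ satisfies $|N(y)\setminus N(S)| \leq k-1$; the bound on $|S|$ follows because each element added to $S$ contributed $\geq k$ new private neighbors in $X$, yielding disjoint copies of $\up_k$. Symmetric and complementary statements handle the three remaining forbidden graphs. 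Using such covering sets inside $G[A_i,B_i]$ and such co-covering sets inside $G[A_{i-1},B_i]$, I would subpartition each bag into $q=q(\mu,k)$ pieces, and then, as in Lemma~\ref{nmchaintemplate}, use the chain-template witnesses (a non-neighbor from $A_{i-1}$ for every $b \in B_i$, and a neighbor from $B_{i-1}$ for every $a \in A_{i-1}$) to deduce that one of the four forbidden parameters strictly drops between consecutive subpieces.

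Finally I would collapse the refined chain template by grouping odd- and even-indexed chain levels exactly as in Lemma~\ref{partition2}, obtaining $O(q)$ bags between which the forbidden parameters satisfy $\mu' \leq \mu-1$; I would then apply the inductive hypothesis to each pair of bags and take the common refinement of the resulting partitions. I expect the main obstacle to be the refinement step itself: in the matching case a single non-edge between $A_{i-1}$ and a chosen $b \in B_i$ sufficed to block a co-matching on the subpiece side, whereas for stars we need $k$ private non-neighbors of $b$ in $A_{i-1}$ to extend a $\overline{(m'-1)\up_k}^{bip}$ into a forbidden $\overline{m'\up_k}^{bip}$. This either forces an iterated application of the generalized covering observations inside $A_{i-1}$ to manufacture witness sets with $\geq k$ private non-neighbors, or requires absorbing a bounded-degree residue into the threshold $2k-1$ of the target class $(2\Lambda_{2k-1},2\up_{2k-1})$-free, which is precisely tuned to swallow the $k-1$ slack left by each greedy step.
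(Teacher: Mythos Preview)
Your high-level inductive strategy and your identification of the main obstacle are both correct, but the fix you sketch is not enough, and the paper's actual route diverges substantially from the chain-template argument you propose.

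The gap is in the refinement step. You are right that the generic chain template of Lemma~\ref{generictemplate} can be built for any bipartite graph, independently of the forbidden subgraphs. The problem is that your star analogue of Observation~\ref{simple1} only says that some $S\subseteq Y$ with $|S|\le n-1$ leaves every $y\in Y\setminus S$ with at most $k-1$ private neighbours; it does \emph{not} give you a cover of the opposite side, nor a $k$-fold cover. Consequently you cannot partition $A_i$ into pieces each joined to a full $k$-subset of $B_i$, and a single non-neighbour from $A_{i-1}$ is no longer a usable witness: to extend $\overline{(m'-1)\up_k}^{bip}$ to $\overline{m'\up_k}^{bip}$ you need an entire co-star $\overline{\up_k}^{bip}$ straddling the two bags. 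Your two suggested remedies (``iterate the covering observation'' or ``absorb the residue into the target threshold $2k-1$'') are on the right track in spirit but, as stated, do not close the argument: iterating the observation inside a single bag of a $0$-template still does not manufacture $k$ private non-neighbours where you need them, and there is no mechanism in a plain chain template to confine the residue to a bounded-degree set.

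What the paper does instead is to abandon the exact chain template and build a \emph{$d$-template} (Definition~\ref{partition3}): the bags are now constructed via an $r$-covering procedure (Lemma~\ref{covering}), so that consecutive bags are $r$-covered / $r$-co-covered by bounded witness sets, at the price that non-consecutive bags are only $d$-joined and $d$-co-joined rather than exactly joined. The $r$-fold covering is precisely what produces, by pigeonhole, a vertex with $k$ non-neighbours inside any prescribed $r$-subset (Observation~\ref{observationvertex}), and this yields the required co-star witnesses in Lemmas~\ref{initialcase}--\ref{generalcase2}. The relaxation to $d$-joins forces two further ingredients absent from your plan: a separate treatment of the ``backward-violating'' vertices $M,N$ via traces (Lemma~\ref{markedvertices}), and a collapse lemma (Lemma~\ref{templatecollapse}) in which the parameter $k$ degrades to $k+2d$ at each inductive step. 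Finally, the induction only yields that each piece is $2\Lambda_s$-free \emph{or} $2\up_s$-free (Theorem~\ref{thm:starseither}); a second, simpler pass (Lemma~\ref{orand}) is needed to upgrade ``or'' to ``and'' and bring the threshold down to $2k-1$. None of these steps is visible in your outline.
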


Most of what follows in this section is devoted to proving 
Theorem~\ref{thm:stars}. We will adapt most of the ideas from Section~\ref{section:matching} dealing with forbidden matching and co-matching. We will use the notation and basic concepts from Section~\ref{section:matching} introduced in Definition~\ref{basicdef}.

%%%%%%%%%%%%%%%%%%%%%%%%%%%%%%%%%
%%%%%             C      O     V      E      R      I      N       G         %%%%%%
%%%%%%%%%%%%%%%%%%%%%%%%%%%%%%%%%

\subsection{Covering $nK_{1,k}$-free bipartite graphs}

In Observation~\ref{simple1} we have shown that a bipartite graph $G=(A,B,\mathcal{E})$, which does not contain
$nK_2$ has a set of $n-1$ vertices in $A$ which cover the neighbourhood $N(A)$.
This, however, does not hold for the bipartite graphs not containing $nK_{1,k}$, for $k>1$. Indeed, for any integer $l$, one can construct a $nK_{1,k}$-free 
bipartite graph $G=(A,B,\mathcal{E})$ for which no set of $l$ vertices of $A$ would cover the neighbourhood $N(A)$. One easy example of such bipartite graph is a matching $(l+1)K_2$. In this subsection we argue about
the parts which can be covered and even covered multiple times by some
small sets in $A$. 

\begin{observation}
Let $G=(A, B, \mathcal{E})$ be an $n\Lambda_k$-free bipartite graph. Then there is a set $S \subseteq A$, $|S| \leq n-1$ such that 
for all $a \in A$, $|N(a) \backslash N(S)|<k$. 
\end{observation}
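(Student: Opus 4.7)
The plan is to construct $S$ greedily. Initialise $S_0 = \emptyset$; at step $i \geq 1$, if there exists $a \in A$ with $|N(a) \setminus N(S_{i-1})| \geq k$, select such an $a_i$, choose a $k$-subset $L_i \subseteq N(a_i) \setminus N(S_{i-1})$, and set $S_i = S_{i-1} \cup \{a_i\}$; otherwise halt and output $S = S_{i-1}$. The halting condition immediately ensures $|N(a) \setminus N(S)| < k$ for every $a \in A$, so everything will reduce to bounding $|S|$ by $n-1$.

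For the bound, I would argue by contradiction: assume the procedure runs for at least $n$ steps, producing $a_1, \ldots, a_n$ and $L_1, \ldots, L_n$. For any $i < j$, the set $L_j \subseteq B \setminus N(S_{j-1})$ is disjoint from $N(a_i)$ since $a_i \in S_{j-1}$, giving (i) $L_j \cap L_i = \emptyset$ (as $L_i \subseteq N(a_i)$), and (ii) $a_i$ has no neighbour in $L_j$. Thus the pairs $\{(a_i, L_i)\}_{i=1}^n$ form $n$ pairwise vertex-disjoint stars $K_{1,k}$ in $G$, yielding a (not necessarily induced) copy of $n \Lambda_k$.

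To upgrade this to an \emph{induced} copy (and thus contradict $n\Lambda_k$-freeness), one also needs that $a_j$ is non-adjacent to every vertex of $L_i$ for each $j > i$. To secure this, I would refine the greedy to always pick $a_i$ maximising $|N(a_i) \setminus N(S_{i-1})|$, and then prove the bound by induction on $n$: passing to the reduced bipartite graph $G' := G[A \setminus \{a_1\}, B \setminus N(a_1)]$ and applying the inductive hypothesis yields centres $c_2, \ldots, c_n$ and leaves $L_2, \ldots, L_n \subseteq B \setminus N(a_1)$ forming an induced $(n-1)\Lambda_k$; these leaves automatically miss $N(a_1)$, so $a_1$ is non-adjacent to any of them, and it remains to find $L_1 \subseteq N(a_1) \setminus N(\{c_2, \ldots, c_n\})$ of size $k$.

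The hard part will be exactly that last step: guaranteeing $|N(a_1) \setminus N(\{c_2, \ldots, c_n\})| \geq k$. This is what forces the extremal choice of $a_1$, and it likely requires strengthening the inductive hypothesis so that the induced $(n-1)\Lambda_k$ returned inside $G'$ is compatible with extension by $a_1$. Everything else in the argument is a direct unfolding of the greedy and the disjointness properties (i)--(ii) above.
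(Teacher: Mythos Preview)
Your greedy approach has a genuine gap that you correctly identified but did not close. The claim ``if the greedy (with maximal choice) runs for $n$ steps, then $G$ contains an induced $n\Lambda_k$'' is false. A small counterexample with $n=3$, $k=2$: take $A=\{a_1,a_2,a_3\}$ with $N(a_1)=\{b_1,b_2,b_3,b_4,e\}$, $N(a_2)=\{b_1,b_2,c_1,c_2\}$, $N(a_3)=\{b_3,b_4,d_1,d_2\}$. Here $a_1$ has strictly maximal degree, so your refined greedy picks $a_1$ first; then each of $a_2,a_3$ contributes two new neighbours, so the greedy runs for all three steps. Yet $G$ is $3\Lambda_2$-free: the only candidate centres are $a_1,a_2,a_3$, and $a_1$ has only one private neighbour (namely $e$) relative to $\{a_2,a_3\}$. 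Your inductive patch breaks at exactly the step you flagged --- the centres $c_2,\ldots,c_n$ returned inside $G'$ may collectively cover almost all of $N(a_1)$ --- and no natural strengthening of the hypothesis repairs this, because the greedy's first choice can simply be wrong.

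The paper sidesteps the issue by a \emph{global} extremal choice rather than a sequential one: take $S\subseteq A$ of size exactly $n-1$ with $|N(S)|$ maximal (or $S=A$ if $|A|\le n-1$). If some $a\in A$ satisfies $|N(a)\setminus N(S)|\ge k$, then for each $s\in S$ maximality gives $|N((S\setminus\{s\})\cup\{a\})|\le|N(S)|$, and hence
\[
|N(s)\setminus N((S\setminus\{s\})\cup\{a\})| = |N(S\cup\{a\})|-|N((S\setminus\{s\})\cup\{a\})| \ge |N(S\cup\{a\})|-|N(S)| \ge k.
\]
So every vertex of $S\cup\{a\}$ has $k$ private leaves relative to the others, and these assemble into an induced $n\Lambda_k$. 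In the counterexample this selects $S=\{a_2,a_3\}$ (with $|N(S)|=8$, beating any pair containing $a_1$), and indeed $|N(a_1)\setminus N(S)|=1<k$. The point is that maximising $|N(S)|$ over all $(n-1)$-sets simultaneously secures private leaves for \emph{every} member of $S$; a one-vertex-at-a-time greedy cannot do this.
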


\begin{proof}
If $|A| \leq n-1$, then set $S=A$ and the required conditions are trivially satisfied. Otherwise, let $S \subseteq A$ be a set of $n-1$ vertices such that $|N(S)|$ is maximal. Suppose, for contradiction, there is $a \in A$ such that 
$|N(a) \backslash N(S)| \geq k$. Then, for any $s \in S$ we have 
\begin{align*}
|N(s) \backslash N(S-s \cup \{a\})| &= |N(S \cup \{a\}) \backslash N(S-s \cup \{a\})| \\ 
&= |N(S \cup \{a\})| - |N(S-s \cup \{a\})| \\
&\geq |N(S \cup \{a\})| - |N(S)| \\
&\geq k.
\end{align*}
Thus, the vertices $S \cup a$ are the centres of $n$ stars of size $k$, a contradiction.
\end{proof}

\begin{lemma} \label{covering}
Let $G=(A, B, \mathcal{E})$ be an $n\Lambda_k$-free bipartite graph, let $r \in \mathbb{N}$ be a positive integer and suppose $|A| \geq (n-1)r$. Then there is a set $W \subseteq A$ with 
$|W| = (n-1)r$ and 
a set $B^c \subseteq B$ such that every vertex $a \in A \backslash W$ satisfies $|N(a) \backslash B^c|<kr$ and every vertex 
$b \in B^c$ has at least $r$ neighbours in $W$.

Furthermore, there is a subset $W' \subseteq W$ of size $|W'|=n-1$ such that $B^c$ is covered by $W'$ and every vertex of $W'$ has at most $kr$ neighbours in $B \backslash B^c$.    
\end{lemma}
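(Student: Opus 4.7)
The plan is to iterate the immediately preceding observation $r$ times, building $W$ as a disjoint union of $r$ sets of size $n-1$ and peeling off a nested chain $B = B_0 \supseteq B_1 \supseteq \cdots \supseteq B_r$ that gets covered more times at each step. Concretely, set $W_0 := \emptyset$, $B_0 := B$, and for $i = 1, \ldots, r$ apply the preceding observation to the induced (hence still $n\Lambda_k$-free) bipartite graph $G_i := G[A \setminus W_{i-1},\, B_{i-1}]$ to obtain a set $S_i \subseteq A \setminus W_{i-1}$ of size exactly $n-1$ with the property that every $a \in A \setminus W_{i-1}$ satisfies $|N(a) \cap B_{i-1} \setminus N(S_i)| < k$. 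Put $B_i := N(S_i) \cap B_{i-1}$ and $W_i := W_{i-1} \cup S_i$. The hypothesis $|A| \ge (n-1)r$ exactly ensures that $|A \setminus W_{i-1}| \ge n-1$ at every step, so each $S_i$ can indeed be taken to have size $n-1$. Finally declare $W := W_r$ and $B^c := B_r$.

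With these definitions $|W| = (n-1)r$. If $b \in B^c = B_r$ then $b \in B_i$ for every $i$, so $b$ has at least one neighbour in each (disjoint) $S_i$, giving at least $r$ neighbours in $W$. For $a \in A \setminus W$, write
\[
N(a) \setminus B_r \;=\; \bigl(N(a) \setminus B_1\bigr) \,\cup\, \bigcup_{i=2}^{r} N(a) \cap (B_{i-1} \setminus B_i),
\]
and observe that, since $a \in A \setminus W_{i-1}$ for every $i$, the round-$i$ application of the observation gives $|N(a) \cap (B_{i-1} \setminus B_i)| < k$. Summing $r$ terms each smaller than $k$ yields $|N(a) \setminus B^c| < kr$, which is the desired bound.

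For the furthermore part I would take $W' := S_r$. Then $B^c = B_r = N(S_r) \cap B_{r-1} \subseteq N(W')$, so $W'$ covers $B^c$. For any $w \in S_r$ the same decomposition applies: the terms indexed $i = 1, \ldots, r-1$ are each smaller than $k$ (since $S_r \subseteq A \setminus W_{i-1}$ for all such $i$), while the last term $N(w) \cap (B_{r-1} \setminus B_r) = N(w) \cap (B_{r-1} \setminus N(S_r))$ vanishes because $w \in S_r$ forces $N(w) \subseteq N(S_r)$. Hence each $w \in W'$ has at most $(r-1)(k-1) \le kr$ neighbours in $B \setminus B^c$. I do not expect a serious obstacle here; the main subtlety is merely bookkeeping—making sure the observation is re-applied to the right induced subgraph at each round and that the telescoping bound tracks only the increments $B_{i-1} \setminus B_i$ rather than overlapping pieces. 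Choosing $W'$ to be the \emph{last} block $S_r$ (rather than, say, $S_1$) is what makes the extra "at most $kr$" bound for $W'$ drop out for free from the same calculation.
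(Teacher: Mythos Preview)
Your proof is correct and follows essentially the same approach as the paper: iterate the preceding observation $r$ times on shrinking $A$-sides to build $W=\bigcup_i S_i$, take $B^c=\bigcap_i N(S_i)$ (which is exactly your $B_r$), and set $W'=S_r$. The only cosmetic difference is that you restrict the $B$-side to $B_{i-1}$ at each round and telescope over the layers $B_{i-1}\setminus B_i$, whereas the paper keeps the full $B$ at every step and uses a union bound $|N(a)\setminus\cap_i N(S_i)|\le\sum_i|N(a)\setminus N(S_i)|$; both yield the same estimates.
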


\begin{proof}
Take $S_1 \subseteq A$ to be a set such that $|S_1|=n-1$ and $|N(S_1)|$ is maximal. Now, for $h=2, 3, \ldots, r$ define
$S_h \subseteq A \backslash \cup_{i=1}^{h-1}S_i$ with $|S_h|=n-1$ and $|N(S_h)|$ maximal. Let $W=\cup_{i=1}^r S_i$ and $B^c= \cap_{i=1}^{r}N(S_i)$ (see Figure~\ref{fig:starproofL6andL7} (left)). Then clearly $|W| = (n-1)r$ and each vertex $b \in B^c$ is adjacent to at least one vertex in $S_i$ for each $i=1,2, \ldots r$, hence is adjacent to at least $r$ vertices in $W$. Furthermore, by the previous observation, for all $a \in A \backslash W$ and for all $1 \leq i \leq r$ we have $|N(a) \backslash N(S_i)|<k$. Hence for all $a \in A \backslash W$, 
$|N(a) \backslash B^c|= |N(a) \backslash \cap_{i=1}^r N(S_i)| = |\cup_{i=1}^r (N(a) \backslash N(S_i))| \leq \sum_{i=1}^r |N(a) \backslash N(S_i)| \leq kr$.

Let us take a subset $W'=S_r$. Clearly $W'$ covers $B^c$. Note that for any $1 \leq i \leq r-1$ and any $w \in W'$, we have $|N(w) \backslash N(S_i)| \leq k$, and clearly as $w \in S_r$, we have $|N(w) \backslash N(S_r)|=0$. Hence, we have $|N(w) \backslash B^c| \leq \sum_{i=1}^r |N(w) \backslash N(S_i)| \leq k(r-1)<kr$. 
\end{proof}

Lemma~\ref{covering} motivates the following definition.
\begin{definition}
Let $G$ be a graph and $V_1, V_2 \subset V(G)$ be two disjoint vertex subsets of $G$. We will say that $V_2$ is $r$-covered by $V_1$ if each vertex of $V_2$ has $r$ neighbours in $V_1$. Similarly, we will say $V_2$ is $r$-co-covered by $V_1$ if each vertex of $V_2$ has $r$ non-neighbours in $V_1$.  
\end{definition}

Thus, when we refer to Lemma~\ref{covering}, we will frequently say that $B^c$ is \emph{$r$-covered} by $W$. We finish this subsection with one more preliminary result:

\begin{lemma}\label{boundeddegree}
Let $G=(A, B, \mathcal{E})$ be an $n\Lambda_k$-free bipartite graph such that the vertices in part $B$ have degree bounded by $d$. 
Then there are at most $nkd^2$ vertices of $A$ with degree at least $k$.
\end{lemma}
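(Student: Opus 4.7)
The plan is to argue by contradiction using a simple greedy extraction. Let $A^{\geq k} := \{a \in A : |N(a)| \geq k\}$, and suppose for contradiction that $|A^{\geq k}| > nkd^2$. I will iteratively pick star centres $a_1, a_2, \ldots, a_n \in A^{\geq k}$ together with pairwise disjoint leaf sets $B_1, B_2, \ldots, B_n \subseteq B$, each of size $k$, so that the $n$ stars $\{a_j\} \cup B_j$ are pairwise vertex-disjoint and have no edges between them, yielding an induced $n\Lambda_k$ and a contradiction.

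Starting with an arbitrary $a_1 \in A^{\geq k}$ and any $B_1 \subseteq N(a_1)$ with $|B_1| = k$, at step $i+1$ I define the ``forbidden'' set $F_i := N(B_1 \cup \cdots \cup B_i) \subseteq A$. Since every vertex in $B$ has degree at most $d$, we have $|F_i| \leq ikd$. Any vertex $a_{i+1} \in A^{\geq k} \setminus F_i$ is non-adjacent to every vertex of $B_1 \cup \cdots \cup B_i$ (being in $F_i$ is exactly having a neighbour in the union), so its $\geq k$ neighbours lie entirely in $B \setminus (B_1 \cup \cdots \cup B_i)$; any $k$ of them serve as the new leaf set $B_{i+1}$. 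Additionally, $a_{i+1} \neq a_j$ for every $j \leq i$ since each $a_j$ belongs to $F_i$. This guarantees both vertex-disjointness of the stars and the absence of edges between distinct stars.

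The process continues successfully through step $n$ provided $|A^{\geq k}| > (n-1)kd$, which is certainly implied by the standing assumption $|A^{\geq k}| > nkd^2$. The resulting configuration is an induced $n\Lambda_k$, contradicting the hypothesis that $G$ is $n\Lambda_k$-free. I do not expect any real obstacle here; the only thing to verify carefully is that membership in $F_i$ encodes precisely the obstruction to extending the packing, which is immediate from the definition. The bound $nkd^2$ is in fact not tight (one could write $(n-1)kd$), but the stated form is what is needed later and has the advantage of being simple to remember.
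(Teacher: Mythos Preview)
Your greedy argument has a gap. You ensure that the new centre $a_{i+1}$ is non-adjacent to the old leaf sets $B_1,\dots,B_i$, but you never ensure that the old centres $a_1,\dots,a_i$ are non-adjacent to the new leaf set $B_{i+1}$. Both directions are required for the stars to form an \emph{induced} $n\Lambda_k$. A concrete instance: take the bipartite $P_4$ with $A=\{a_1,a_2\}$, $B=\{b_1,b_2\}$ and edges $a_1b_1$, $a_1b_2$, $a_2b_2$. With $n=2$, $k=1$, your procedure may select $a_1$, $B_1=\{b_1\}$, compute $F_1=N(b_1)=\{a_1\}$, and then select $a_2$ with $B_2=\{b_2\}$; but $a_1b_2$ is an edge, so you have produced a $P_4$, not an induced $2K_2$.

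The natural repair---forbid every $a$ with $N(a)\cap N(\{a_1,\dots,a_i\})\ne\emptyset$---does yield pairwise disjoint neighbourhoods and hence an induced $n\Lambda_k$, but bounding this larger forbidden set requires a bound on $|N(a_j)|$, which is not available a priori. This is exactly where the second factor of $d$ in $nkd^2$ enters, and why your parenthetical ``one could write $(n-1)kd$'' is not justified by your argument. The paper supplies the missing degree control via Lemma~\ref{covering}: it first removes a set $W\subseteq A$ of size at most $(n-1)d$ and shows (using that $B^c$ is already $d$-covered by $W$ while $B$-degrees are at most $d$) that every vertex of $A\setminus W$ has degree less than $kd$. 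Only then does a greedy with the corrected forbidden set---at most $kd(d-1)$ vertices share a neighbour with any chosen centre---run for $n$ steps and give the bound.
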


\begin{proof}
By the previous lemma there is a set $W \subseteq A$ with $|W| \leq (n-1)d$ and a set $B^c \subseteq B$ such that every vertex of $B^c$ is covered by $d$ elements of $W$ and such that every $a \in A \backslash W$ satisfies $|N(a) \backslash B^c|<kd$. 
It is easy to see that any $a \in A \backslash W$ has no neighbours in $B^c$ as otherwise a vertex of $B^c$ would have a degree more than $d$ (see Figure~\ref{fig:starproofL6andL7} (right)). 
Hence, for every $a \in A \backslash W$, the degree is bounded by $kd$. Now take any vertex $a \in A \backslash W$ such that
$k \leq |N(a)| \leq kd$. As each vertex of $N(a)$ has degree bounded by $d$, there are at most $kd(d-1)$ vertices in $A$ which
intersect neighbourhood of $N(a)$. Thus, if we have $kd(d-1)(n-1)+n$ vertices with $k \leq |N(a)| \leq kd$, then it is
easy to see that we have $n\Lambda_k$. Hence there are at most $kd(d-1)(n-1)+n+(n-1)d \leq kd^2n$ vertices of degree at least $k$ in $A$.    
\end{proof}

\begin{figure}[H]
	\centering
	\begin{tikzpicture}[scale=.6,auto=left]

%%%%%%%%%%%%%%%%%%%%%%%%%%%%%%%%%%%%%%%%%%%%
		%\draw[step=1cm,gray,very thin] (0,0) grid (13,5);
		%\foreach \x in {0,1,2,3,4,5,6,7,8,9,10,11,12,13}
    		%\draw (\x cm,1pt) -- (\x cm,-1pt) node[anchor=north] {$\x$};
		%\foreach \y in {0,1,2,3,4}
    		%\draw (1pt,\y cm) -- (-1pt,\y cm) node[anchor=east] {$\y$};
\iffalse
		\foreach \start in {}
		{		
		\node[w_vertex] (1) at (\start+0, 2) {}; 	
		\node[w_vertex] (2) at (\start+1, 2) { };
		\node[w_vertex] (3) at (\start+2, 2) { };
		\node[w_vertex] (4) at (\start+3, 2) { };
		\node[w_vertex] (5) at (\start+0,0) { };
		\node[w_vertex] (6) at (\start+1,0) { };
		\node[w_vertex] (7) at (\start+2,0) { };
		\node[w_vertex] (8) at (\start+3,0) { };				
		\foreach \from/\to in {1/5,2/6,3/7,4/8}
	    	\draw (\from) -- (\to);
		%\foreach \from/\to in {5/6,6/7,7/8}
	    	%\draw (\from) -- (\to);		
		}
\fi
%%%%%%%%%%%%%%%%%%%%%%%%%%%%%%%%%%%%%%%%%%%%%%%

	\foreach{\h} in {7} 
	{
\iffalse
		\foreach \x/\y in {0/\h+2}
		{
		\draw [dotted] (\x+0,\y+0) -- (\x+33,\y+0); 
		\draw  (\x+0,\y+0) -- (\x+0, \y-1);
		\draw  (\x+14,\y+0) -- (\x+14, \y-1);
		\draw  (\x+28, \y+0) -- (\x+28, \y-1);
		}

		\draw (0, \h+2) node[anchor=south] {$A'_{11}$};
	
		\foreach \x/\y in {1/-2}
		{
		\draw [dotted] (\x+0,\y+0) -- (\x+32,\y+0); 
		\draw  (\x+0,\y+0) -- (\x+0, \y+1);
		\draw  (\x+14,\y+0) -- (\x+14, \y+1);
		\draw  (\x+28, \y+0) -- (\x+28, \y+1);
		}
		\draw (1, -2) node[anchor=north] {$B'_{13}$};
\fi
	%%%%%%%              Shades            %%%%%%
	
\iffalse	
		\foreach{\start} in {0, 7}
			{\fill[blue!15!white] (\start-4,0) -- (\start-1,\h) -- (\start+5,\h) -- (\start+2,0) -- cycle;}
		\foreach{\start} in {0, 7}
			{\fill[blue!15!white] (\start+3,0) -- (\start-1,\h) -- (\start+5,\h) -- (\start+9,0) -- cycle;}
		
		\foreach{\start} in {0, 7}
			{
			\draw (\start+-4, 0) -- (\start-1, \h);
			\draw (\start+2, 0) -- (\start+5, \h);
			}
		\foreach{\start} in {0, 7}
			{
			\draw (\start+-1, \h) -- (\start+3, 0);
			\draw (\start+5, \h) -- (\start+9, 0);		
			}	
\fi
		
	%	\draw [dotted] (10, 0) -- (12, 0);
	%	\draw [dotted] (13, \h) -- (15, \h);
	%	\draw [dotted] (11.5, \h*0.5) -- (13.5, \h*0.5);
	%	\draw [dotted] (34, 0) -- (36, 0);	
	%	\draw [dotted] (35, \h*0.5) -- (36, \h*0.5);

	%%%%%%%%%%%   Ellipses     %%%%%%%%%%%%
		\foreach \start in {0,14}
		{\draw[fill=white] (\start,\h) ellipse (4cm and 1cm);	}
		
		\foreach \start in {0,14}
		{\draw[fill=white] (\start,0) ellipse (4cm and 1cm);}

		\foreach \start in {0,14}
		{\draw[fill=white] (\start+1.8, 0) ellipse (1.6cm and 0.5cm);}

		\foreach \z in {}
			{
			\draw (\z+5,-1) .. controls (\z+5.5,0)  .. (\z+5,1);
			\draw (\z+7,-1) .. controls (\z+8.5,-0.3) and (\z+6.5, 0.3) .. (\z+7,1);
			}
		\foreach \z in {}
			{
			\draw (\z+3+5,\h-1) .. controls (\z+3+5.5,\h)  .. (\z+3+5,1+\h);
			\draw (\z+3+7,\h-1) .. controls (\z+3+8.5,\h-0.3) and (\z+3+6.5, 0.3+\h) .. (\z+3+7,1+\h);
			}

%%%%%%%%%%%%%%%% square %%%%%%%%%%%%%%%%%%%

\foreach \x/\y in {2/6.7, 16/6.7}
{
\draw (\x, \y) -- (\x+1, \y) -- (\x+1, \y+0.5) -- (\x, \y+0.5) -- cycle ;
}

	%%%%%%%%%%%%%labelling %%%%%%%%%%%%%

	%	\foreach \x/\y in {-3/-1.6, 4/-1.6,  11/-1.6, 18/-1.6, 25/-1.6}		
	%	{
	%	\draw [dotted] (\x+0,\y+0) -- (\x+4,\y+0); 
	%	\draw [dashed] (\x+0,\y+0) -- (\x+0, \y+0.15);
	%	\draw [dashed] (\x+2,\y+0) -- (\x+2, \y+0.15);
	%	\draw [dashed] (\x+4, \y+0) -- (\x+4, \y+0.15);
	%	}

	%	\foreach \x/\y in {0/\h+1.65, 7/\h+1.65, 14/\h+1.65, 21/\h+1.65, 28/\h+1.65}		
	%	{
	%	\draw [dotted] (\x+0,\y) -- (\x+4,\y); 
	%	\draw [dashed] (\x+0,\y+0) -- (\x+0, \y-0.15);
	%	\draw [dashed] (\x+2,\y+0) -- (\x+2, \y-0.15);
	%	\draw [dashed] (\x+4, \y+0) -- (\x+4, \y-0.15);
	%	}

\foreach \z/\y in {-1/-0.5}
		{	
		\draw (0, \h+1.5+\y) node[font=\fontsize{12}{12}, anchor=south] {$A$};
		\draw (0+14, \h+1.5+\y) node[font=\fontsize{12}{12}, anchor=south] {$A$};
		}

\foreach \z/\y in {1/0.5}
		{
		\draw (0, -1.6+\y) node[font=\fontsize{12}{12}, anchor=north] {$B$};
		\draw (0+14, -1.6+\y) node[font=\fontsize{12}{12}, anchor=north] {$B$};
		}

		\node[w_vertex] (11) at (1.5, 0 ) {};		
		\node[w_vertex] (12) at (2, 0) {};
		\node[w_vertex] (13) at (2.5, 0) {};
		\node[w_vertex](14) at (3.1, 0){};
		
		\node[w_vertex] (15) at (1.5+14, 0 ) {};		
		\node[w_vertex] (16) at (2+14, 0) {};
		\node[w_vertex] (17) at (2.5+14, 0) {};
		\node[w_vertex](18) at (3.1+14, 0){};

		\node[w_vertex](19) at (-2, \h){};
		\draw(19) -- (-2.5, 0); 
		\draw(19) -- (-1.5, 0);
		\draw[fill=white] (-2, 0) ellipse (0.5cm and 0.2cm);
		\draw(19) -- (12);
		\draw (-2, \h) node[font=\fontsize{12}{12}, anchor=south] {$a$};
		\draw (-2, 0) node[font=\fontsize{12}{12}, anchor=north] {$N(a) \backslash B^c$};
		
		\node[w_vertex](20) at (-2.4+14, \h){};
		\node[w_vertex](21) at (-1.6+14, \h){};
		\draw (-2.4+14, \h) node[font=\fontsize{12}{12}, anchor=south] {$a$};
		\draw (-1.6+14, \h) node[font=\fontsize{12}{12}, anchor=south] {$a'$};
		\draw (-1.7+14, 0) ellipse (0.5cm and 0.2cm);
		\draw (-2.3+14, 0) ellipse (0.5cm and 0.2cm);
		\draw(20) -- (-2.8+14, 0); 
		\draw(20) -- (-1.8+14, 0); 
		\draw(21) -- (-2.2+14, 0); 
		\draw(21) -- (-1.2+14, 0); 
		\draw[dashed] (20) -- (14+0.2, 0);		
		\draw[dashed] (20) -- (14+3.4, 0);		
		\draw[dashed] (21) -- (14+0.2, 0);		
		\draw[dashed] (21) -- (14+3.4, 0);		
		\draw (-2.8+14, 0) node[font=\fontsize{12}{12}, anchor=north] {$N(a)$};
		\draw (-1.2+14, 0) node[font=\fontsize{12}{12}, anchor=north] {$N(a')$};

		\draw (11) node[anchor=south] {};
		\draw (12) node[anchor=south] {};
		\draw (13) node[anchor=south] {};

		\draw (11) -- (2.1,7);
		\draw (11) -- (2.5, 7);
		\draw (11) -- (2.9, 7);
		
		\draw (12) -- (2.2,7);
		\draw (12) -- (2.4,7);
		\draw (12) -- (2.7,7);	

		\draw  (13) -- (2.1,7);
		\draw  (13) -- (2.4, 7);
		\draw  (13) -- (2.9, 7);

		\draw (14) -- (2.2, 7);
		\draw (14) -- (2.5, 7);
		\draw (14) -- (2.9, 7);

		\draw (15) -- (2.1+14,7);
		\draw (15) -- (2.5+14, 7);
		\draw (15) -- (2.9+14, 7);
		
		\draw (16) -- (2.2+14,7);
		\draw (16) -- (2.4+14,7);
		\draw (16) -- (2.7+14,7);	

		\draw  (17) -- (2.1+14,7);
		\draw  (17) -- (2.4+14, 7);
		\draw  (17) -- (2.9+14, 7);

		\draw (18) -- (2.2+14, 7);
		\draw (18) -- (2.5+14, 7);
		\draw (18) -- (2.9+14, 7);

		\draw (2.5, \h) node[font=\fontsize{12}{12}, anchor=south] {$W$};
		\draw (2.5+14, \h) node[font=\fontsize{12}{12}, anchor=south] {$W$};
		
		\draw (1, -0.5) node[font=\fontsize{12}{12}, anchor=south] {$B^c$};
		\draw (1+14, -0.5) node[font=\fontsize{12}{12}, anchor=south] {$B^c$};

	\foreach{\z} in {17} 
	{
	}

	}
	\end{tikzpicture}
	
	\caption{Proofs of Lemma~\ref{covering} (left) and Lemma~\ref{boundeddegree} (right)}
	\label{fig:starproofL6andL7}
\end{figure}
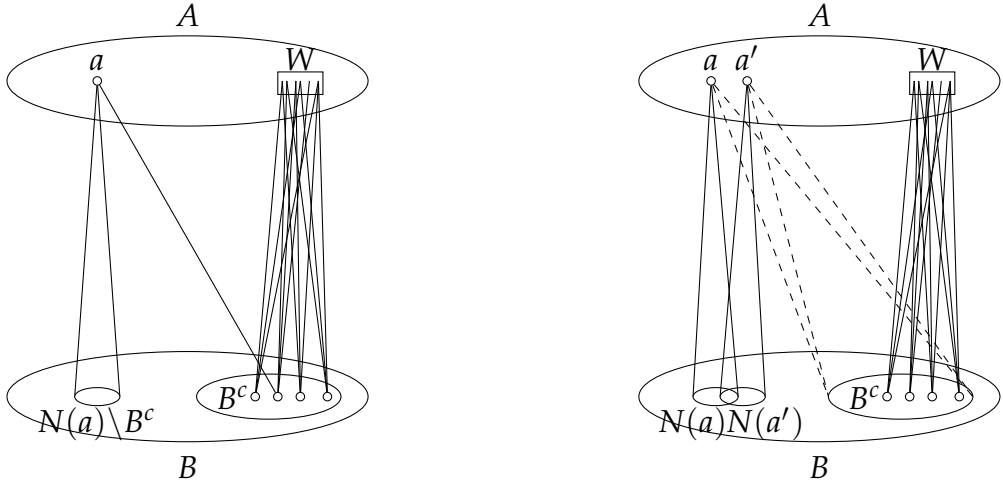

%%%%%%%%%%%%%%%%%%%%%%%%%%%%%%%%%
%%%%%%%%%%   P    A    R    T    I    T    I    O    N   %%%%%%%%
%%%%%%%%%%%%%%%%%%%%%%%%%%%%%%%%%

\subsection{$d$-template}
We will be saying that a subset $A$ is \emph{$d$-joined} (resp. \emph{$d$-co-joined}) to a subset $B$, 
if every vertex $a \in A$ has at most $d$ non-neighbours (resp $d$ neighbours) in $B$. 
We will further say that A and B is a \emph{$d$-join} (resp. \emph{$d$-co-join}), if both $A$ is $d$-joined 
(resp. $d$-co-joined) to $B$, and $B$ is $d$-joined (resp. $d$-co-joined) to $A$.

\begin{definition} \label{partition3}
Let $G=(A, B, \mathcal{E})$ be a bipartite graph. We will call the partition $A=A_1 \cup A_2 \cup \ldots \cup A_z$,
$B=B_1 \cup B_2 \cup \ldots \cup B_z$ a $d$-template, if it satisfies the following two conditions:
\begin{itemize}[noitemsep,topsep=0pt,parsep=0pt,partopsep=0pt,labelwidth=2.5cm,align=left,itemindent=2.5cm]
\item[(*)] $A_i$ is $d$-joined to $\cup_{h=i+2}^z B_h$ and $A_i$ is $d$-co-joined to $\cup_{h=1}^{i-1} B_h$
%$\cup_{h=1}^i B_h$ and $\cup_{h=i+1}^s A_h$ is a $d$-co-join for all $i$. 
\item[(**)] $B_i$ is $d$-joined to $\cup_{h=1}^{i-2} A_h$ and $B_i$ is $d$-co-joined to $\cup_{h=i+1}^z A_h$  
%$\cup_{h=1}^i A_h$ and $\cup_{h=i+2}^{s} B_h$ is a $d$-join for all $i$. 
\end{itemize}

Suppose further, that each part $A_i$ and $B_i$ is subdivided into $q$ pieces $A_i=A_{i1} \cup A_{i2} \cup \ldots \cup A_{iq}$ and $B_i=B_{i1} \cup B_{i2} \cup \ldots \cup B_{iq}$ such that for some integers $n', m', m'', k  \in \mathbb{N}$ and some subset $I \subseteq \{1,2, \ldots, z \}$ the following holds:
\begin{itemize}[noitemsep,topsep=0pt,parsep=0pt,partopsep=0pt,labelwidth=2.5cm,align=left,itemindent=2.5cm]
\item[(***)   ] If $j=i$, then the graph $G[A_{ig}, B_{jh}]$ is in $Free((n'-1)\up_{k})$ for all $1 \leq g,h \leq q$.
\item[(****)   ] If $j=i+1$, then the graph $G[A_{ig}, B_{jh}]$ is either in $Free (\overline{(m'-1)\up_{k}}^{bip})$ or in  $Free (\overline{(m''-1)\Lambda_{k}}^{bip})$, depending on whether $i \in I$ or $i \not\in I$, respectively, for all $1 \leq g, h \leq q$.

%If $j=i+1$ and $i \in I$, then $G[A_{ig}\cup B_{jh}]$ is in $Free (\overline{(m'-1)\up_{k}}^{bip})$ for all $1 \leq g,h \leq q$. If $j=i+1$ and $i \not\in I$ then $G[A_{ig}\cup B_{jh}]$ is in  $Free (\overline{(m''-1)\Lambda_{k}}^{bip})$ for all $1 \leq g,h \leq q$. 

\end{itemize}

Then we will call this refined partition a $(n',m',m'', k, q, d)$-template or a refined $d$-template (with parameters $(n',m',m'', k,q)$). 

\end{definition}

One can observe that a $d$-template is an extension of a chain template, which is just a 0-template. One can see from the definition of $d$-template that there is a $d$-join or a $d$-co-join between all pairs of bags $A_i$ and $B_j$ for all $i, j$ except for the consecutive $i=j$ or $i=j+1$. Thus Figure~\ref{template} is a good visualisation of $d$-template, where we are mostly interested in getting to know some extra structure in the shaded regions. We note, however, that conditions (*) and (**) require not only a $d$-join between each pair of non-consecutive bags in the partition, but between a bag in one part and a union of bags in the other. This implies, for instance, that a vertex in $A_i$ can have at most $d$ neighbours in $\cup_{h=1}^{i-1} B_h$ and at most $d$-non-neighbours in $\cup_{h=i+2}^z B_h$. This ensures that $d$-template partitions the vertices in the bags in such a way that, for any $v \in A_i$ and $w \in A_j$ with $j>i+1$, $N(v)$ includes all but at most $2d$ vertices of $N(w)$. Thus, intuitively, the vertices are partitioned in a way that respects neighbourhood inclusion.

The $d$-template has several advantages over $0$-template or chain template. As we will see later, the $d$-template has strong covering properties, where the bags are covered by many vertices from the preceding bag. This allows us to deduce much stronger structural restrictions on the bipartite graphs induced by consecutive bags. In particular, it allows us to obtain the restrictions described in (***) and (****), which will be crucial for our inductive argument, similar to the one described in Section~\ref{section:matching}. The next Lemma is an extension of Lemma~\ref{partition2} which shows that $d$-template can be collapsed into a bounded number of parts, preserving the structural properties discovered in (***) and (****).

\begin{lemma}\label{templatecollapse}
Let $G=(A, B, \mathcal{E})$ be a bipartite graph that admits a refined $d$-template with parameters $(n',m',m'',k,q)$.
Then $A$ and $B$ can be partitioned into $4q$ parts each such that the graphs between 
any two parts is either in $Free((n-1)\up_{k+2d})$, $Free(\overline{(m-1)\up_{k+2d}}^{bip})$, or in
$Free (\overline{(m'-1)\Lambda_{k+2d}}^{bip})$. 
\end{lemma}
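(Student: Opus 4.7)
The plan is to adapt the proof of Lemma~\ref{partition2} to the $d$-template setting: the ``$+2d$'' shift in the conclusion reflects the slack allowed by the $d$-join/$d$-co-join bounds of (*)--(**), while the factor $4q$ (as opposed to $2q$) arises because, in addition to the parity of the bag index $i$, we must split bags by the binary attribute $\mathbf{1}[i\in I]$ so that within each collapsed pair all adjacent sub-pairs $(A_{i,g},B_{i+1,h})$ share a single type from (****).

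Concretely, for each $g\in\{1,\ldots,q\}$ I would partition $\bigcup_i A_{i,g}$ into four subsets $A^{(s)}_g$, $s\in\{1,2,3,4\}$, indexed by $(i\bmod 2,\,\mathbf{1}[i\in I])$, and analogously for $B$, but tracking $(j-1)\in I$ on the $B$-side since the adjacent sub-pair $(A_{j-1,g},B_{j,h})$ carries the type $\mathbf{1}[j-1\in I]$. This yields $4q$ parts on each side. Each collapsed pair $(A^{(l)}_g,B^{(m)}_h)$ then falls into one of two cases. If the two parities agree, the only sub-pairs that occur are same-indexed $(A_{i,g},B_{i,h})$ and (***) applies uniformly, with target conclusion $Free((n'-1)\up_{k+2d})$. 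If the parities disagree, the only sub-pairs are adjacent $(A_{i,g},B_{i+1,h})$, and by the $I$-split (****) applies uniformly with a single type, giving $Free(\overline{(m'-1)\up_{k+2d}}^{bip})$ or $Free(\overline{(m''-1)\Lambda_{k+2d}}^{bip})$.

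The technical heart is an approximate-nesting lemma: for $v\in A_{(2i+1),g}$ and $w\in A_{(2j+1),g}$ with $i<j$ both inside $A^{(l)}_g$, the bound $|(N(w)\setminus N(v))\cap B^{(m)}_h|\leq 2d$ holds, because any $b$ in this set lies either in $\bigcup_{h\leq 2j}B_h$ (and is then one of the at most $d$ neighbours of $w$ guaranteed there by the $d$-co-join in (*)) or in $\bigcup_{h\geq 2i+3}B_h$ (and is then one of the at most $d$ non-neighbours of $v$ there by the $d$-join in (*)). A symmetric bound holds for pairs of centres in $B^{(m)}_h$ using (**). Given this, the localization proceeds analogously to Lemma~\ref{partition2}: in the parity-match case, any $(n'-1)\up_{k+2d}$ in $G[A^{(l)}_g,B^{(m)}_h]$ with centres $b_1,\ldots,b_{n'-1}$ ordered by their bag index $j_t$ and disjoint leaves $L_t$ of size $k+2d$ can be trimmed by discarding at most $2d$ exceptional leaves per star (using the near-nesting across centres) and concentrated into a single same-indexed sub-pair $(A_{i^*,g},B_{i^*,h})$, producing $(n'-1)$ disjoint $\up_k$'s there and contradicting (***). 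The parity-mismatch cases are handled by the mirror argument applied to adjacent sub-pairs, contradicting (****).

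The main obstacle is the localization step: maintaining vertex-disjointness of the $n'-1$ trimmed copies after removing up to $2d$ leaves per star and routing them into one common sub-pair. The near-nesting bound supplies exactly $2d$ slack per pair of centres, so a careful choice of the target index $i^*$ (for instance, via a pigeonhole on how the leaves $L_t$ distribute across same-indexed bags, or by processing the $b_t$ in order of decreasing $j_t$ and sequentially trimming) is needed to guarantee that the final $n'-1$ copies of $\up_k$ are pairwise vertex-disjoint inside $(A_{i^*,g},B_{i^*,h})$.
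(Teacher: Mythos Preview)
Your partition scheme and the approximate-nesting bound match the paper's proof essentially verbatim; the paper also splits by parity and by $\mathbf{1}[i\in I]$ to obtain $4q$ parts on each side and then argues via the $2d$ slack coming from (*)--(**). The only real divergence is in your localization paragraph, and there you are making the argument harder than it needs to be.

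The ``main obstacle'' you identify is not an obstacle. The paper's localization is a clean two-step argument, and vertex-disjointness is automatic in both steps. First, if two centres $v_x\in B_{(2i+1)h}$ and $v_y\in B_{(2j+1)h}$ with $i<j$ lie in different bags, the near-nesting bound (in the correct direction: $|N(v_x)\setminus N(v_y)|\le 2d$, note that on the $B$-side the inequality goes the opposite way from your $A$-side statement because (**) reverses the roles of ``forward'' and ``backward'') already contradicts $v_x$ having $k+2d$ leaves avoiding $N(v_y)$. Hence \emph{all} centres lie in a single $B_{i^*,h}$; there is no routing or pigeonhole needed. Second, once all centres are in $B_{i^*,h}$, the same $2d$ bound applied to any pair of centres shows that at most $2d$ leaves of each star lie outside $A_{i^*,g}$, so at least $k$ remain inside. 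Since these surviving leaf sets are subsets of the original pairwise-disjoint leaf sets, disjointness is inherited for free, and you land an $(n'-1)\up_k$ inside $G[A_{i^*,g},B_{i^*,h}]$, contradicting (***). No greedy trimming or ordering of the $b_t$ is required.
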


\begin{proof}

We use the notation of the parts of refined $d$-template as given in Definition~\ref{partition3}. 
For each $1 \leq g \leq q$, define
 
\begin{align*}
A'_{1g} = \bigcup_{\substack{i \ odd,\\ i \in I}} A_{ig}, \qquad &  
A'_{2g} = \bigcup_{\substack{i \ odd,\\ i \not\in I}} A_{ig},& 
A'_{3g} = \bigcup_{\substack{i \ even \\ i \in I}} A_{ig}, \qquad &  
A'_{4g} = \bigcup_{\substack{i \ even,\\ i \not\in I}} A_{ig},\\
B'_{1g} = \bigcup_{\substack{i \ odd,\\ i \in I}} B_{ig}, \qquad & 
B'_{2g} = \bigcup_{\substack{i \ odd,\\ i \not\in I}} B_{ig},& 
B'_{3g} = \bigcup_{\substack{i \ even \\ i \in I}} B_{ig}, \qquad & 
B'_{4g} = \bigcup_{\substack{i \ even,\\ i \not\in I}} B_{ig}.
\end{align*}

We will show that $A=\bigcup_{\substack{1\leq i \leq 4,\\ 1 \leq g \leq q}} A'_{ig}$, $B=\bigcup_{\substack{1\leq i \leq 4,\\ 1 \leq g \leq q}}B'_{ig}$ is the required partition into $4q$ parts. In other words, we will show that between any two parts $A'_{ig}$ and $B'_{jh}$, the induced graph $G[A'_{ig}, B'_{jh}]$ is either in $Free((n'-1)\up_{k+2d})$, $Free(\overline{(m'-1)\up_{k+2d}}^{bip})$, or in
$Free (\overline{(m''-1)\Lambda_{k+2d}}^{bip})$. 

Consider the graph $G[A'_{1g}, B'_{1h}]$. Suppose, for contradiction, this part contains an induced subgraph $(n-1)\up_{k+2d}$. Let us denote the centres of the stars $(n-1)\up_{k+2d}$ by $v_1, v_2, \ldots, v_{n-1}$. These centres belong to $B'_{1h}$, thus to the bags $\bigcup_{2i+1 \in I} B_{(2i+1) h}$. Consider first the case when two centres belong to different bags $v_x \in B_{(2i+1)h}$, $v_y \in B_{(2j+1)h}$ with $i<j$. Then, by (**) one can see that $N(v_x)$ contains at most $d$ vertices of $\cup_{l=2i+2}^z A_l$ while $N(v_y)$ contains all except possibly $d$ vertices of $\cup_{l=1}^{2j-1} A_l \supseteq \cup_{l=1}^{2i+1} A_l$ as $i<j$. This means that $|N(v_y) \backslash N(v_x)| \leq 2d$, and hence a contradiction as $v_x$ and $v_y$ being centres of stars satisfy $|N(v_y) \backslash N(v_x)| \geq 2d+k>2d$. We conclude that all the vertices $v_1, v_2, \ldots, v_{n-1}$ belong to some $B_{2i+1, h}$ for some $i$. Now, again, by (**) we have that $B_{(2i+1) h}$ is $d$-joined to $\cup_{l=1}^{2i-1} A_l$ and $d$-co-joined to $\cup_{l=2i+2}^{z} A_l$. This means that for any two centres of stars $v_x$ and $v_y$ we have $|(N(v_x) \backslash N(v_y)) \cap (\cup_{l=1}^{2i-1} A_l \cup_{l=2i+2}^{z} A_l)| \leq 2d$. Thus, since $v_x$ and $v_y$ are centres of $(n-1)\up_{2d+k}$, at least $k$ neighbours of $v_x$ in this star forest must belong to $A_{2i+1, g}$. This holds for all $1 \leq x \leq n-1$, hence we have a $(n-1)\up_{k}$ in $G[A_{(2i+1) g}, B_{(2i+1) h}]$ which is a contradiction. This proves that $G[A'_{1g}, B'_{1h}]$ is in $Free((n-1)\up_{k+2d})$. Similar arguments can be used to deduce the required results for the other pairs of parts of the partition.
\end{proof}

%%%%%%%%%%%%%%%%%%%%%%%%%%%%%%%%%%%%%%
%%%%%%%%%%%%%%%%%%%%%%%%%%%%%%%%%%%%%%

\subsection{Partition procedure and conditions (*)-(**)} \label{partitionprocedure}

Let $G=(A,B, \mathcal{E})$ be a $(n'\up_k, n''\Lambda_k, \overline{m'\up_k}^{bip}, \overline{m''\Lambda_k}^{bip})$-free graph for some $n', n'', m', m'' \in \mathbb{N}$ and $n', n'', m', m'' \geq 3$. Let $n=max(n', n'', m', m'')$. In this section we will describe the partition procedure we use to obtain $d$-template for $d=(n-1)r+kr$. Here $r$ is any positive integer, and the bigger the value of $r$ the stronger the covering properties will be between consecutive bags, which we will use in the next section. However, the increased covering properties between consecutive bags comes with the ''cost'' of increased $d$ resulting in weaker $d$-join and $d$-co-join relationships between the non-consecutive bags.  

We will now create partition $A = A_1 \cup A_2 \cup \ldots \cup A_z$, $B = B_1 \cup B_2 \cup \ldots \cup B_z$ that will lead us to a $d$-template. In the process of construction we will be first constructing supersets $A_i^+ \supseteq A_i$ and $B_i^+ \supseteq B_i$, from which, after removal of some vertices, the required sets of the partition $A_i$ and $B_i$ will be obtained. We start by taking $B_1^+=B_1=\{b\}$ for a vertex $b$ with minimal degree in $B$ and we let $A_1^+=N(B_1)$. We will create the bags $B_i$ and $A_i$ in alternating order and we will confirm that the following conditions hold:

\begin{itemize}
\item[$(P_1i)$] Vertices in $A_{i-1}$ have co-degree at most $d=kr+(n-1)r$ in $B \backslash (B_1 \cup B_2 \cup \ldots \cup B_{i})$
\item[$(P_2i)$] Vertices in $B_{i}$ have degree at most $kr$ in $A \backslash (A_1 \cup A_2 \cup \ldots \cup A^+_i)$
\item[$(P_3i)$] Vertices in $B_{i}$ have degree at most $d=kr+(n-1)r$ in $A \backslash (A_1 \cup A_2 \cup \ldots \cup A_{i})$
\item[$(P_4i)$] Vertices in $A_i$ have co-degree at most $kr$ in $B \backslash (B_1 \cup B_2 \cup \ldots \cup B_{i+1}^+)$
\end{itemize}

Note that when the $B_1$ and $A_1^+$ are constructed, the first two conditions hold trivially for $i=1$. Indeed, the first statement is true as there are no vertices in $A_0$, the second is true as all the neighbours of $B_1$ are in $A_1^+$, so it has degree 0 in $A \backslash A_1^+$. 

We will now assume that $A_1$, $B_1$, $A_2$, $\ldots$, $B_{i}$, $A_{i}^+$ have been constructed. We will show how to construct $A_i$ and $B_{i+1}^+$. Assuming that $(P_2i)$ holds, we will show that $(P_3i)$ and $(P_4i)$ hold. We will first cover the main case when $|A_i^+| \geq (n-1)r$. The construction is illustrated in Figure~\ref{starconstruction}(left).

\begin{itemize} 
\item Construction of $A_i$ and $B_{i+1}^+$ when $|A_i^+|\geq (n-1)r$.

As $|A_i^+|\geq (n-1)r$, we start by applying Lemma~\ref{covering} to the 
complement of graph $G[A_i^+, B \backslash (\cup_{h=1}^{i} B_h)]$. As a result 
we can find a subset $A_i' \subseteq A_i^+$ of size $(n-1)r$ and a subset
$B_{i+1}^+ \subseteq B \backslash (\cup_{h=1}^{i} B_h)$, such that:   
\begin{itemize}
	\item[-] $B_{i+1}^+$ is $r$-co-covered by $A_i'$, i.e. each vertex of $B_{i+1}^+$ 
		has at least $r$ non-neighbours in $A_i'$.   
\end{itemize}

By Lemma~\ref{covering} every vertex of $A_i^+ \backslash A_i'$ has co-degree at most $kr$ in $B \backslash (\cup_{h=1}^{i} B_h \cup B_{i+1}^+)$. Let us denote by $A''_i \subseteq A'_i$ the set of vertices which has 
more than $kr$ non-neighbours in $B \backslash (\cup_{h=1}^i B_h \cup B_{i+1}^+)$.
By Lemma~\ref{covering} it follows that $A_i'' \neq A_i'$. Set $A_i=A_i^+ \backslash A_i''$, which is a non-empty set with every vertex having at most $kr$ non-neighbours in 
$B \backslash (\cup_{h=1}^i B_h \cup B_{i+1}^+)$. This implies that $(P_4i)$ holds. Also note that $(P_3i)$ follows from the fact that $(P_2i)$ holds and by definition that $|A^+_{i} \backslash A_i| \leq (n-1)r$.
\end{itemize}

Now we assume that bags $A_1, B_1, A_2, \ldots, B_i, A_i, B_{i+1}^+$ have been constructed. Below we describe how $B_{i+1}$ and $A_{i+1}^+$ is created in the main case when $|B_{i+1}| \geq (n-1)r$. Assuming $(P_4i)$ holds, we will show that $(P_1(i+1))$ and $(P_2(i+1))$ hold. The construction is illustrated in Figure~\ref{starconstruction}(right) and it is a bipartite complement analog of the previous construction illustrated in Figure~\ref{starconstruction}(left).

\begin{itemize} 
\item Construction of $B_{i+1}$ and $A_{i+1}^+$ when $|B_{i+1}^+|\geq (n-1)r$.

As $|B_{i+1}^+|\geq (n-1)r$, we start by applying Lemma~\ref{covering} to the graph $G[B_{i+1}^+, A \backslash (\cup_{h=1}^{i} A_h)]$. As a result 
we can find a subset $B_{i+1}' \subseteq B_{i+1}^+$ of size $(n-1)r$ and a subset
$A_{i+1}^+ \subseteq A \backslash (\cup_{h=1}^{i} A_h)$, such that:   
\begin{itemize}
	\item[-] $A_{i+1}^+$ is $r$-covered by $B_{i+1}'$, i.e. each vertex of $A_{i+1}^+$ 
		has at least $r$ neighbours in $B_{i+1}'$.   
\end{itemize}

By Lemma~\ref{covering} every vertex of $B_{i+1}^+ \backslash B_{i+1}'$ has degree at most $kr$ in $A \backslash (\cup_{h=1}^{i} A_h \cup A_{i+1}^+)$. Let us denote by $B''_i \subseteq B'_i$ the set of vertices which has 
more than $kr$ neighbours in $A \backslash (\cup_{h=1}^i A_h \cup A_{i+1}^+)$.
By Lemma~\ref{covering} it follows that $B_{i+1}'' \neq B_{i+1}'$. Set $B_{i+1}=B_{i+1}^+ \backslash B_{i+1}''$, which is a non-empty set with every vertex having at most $kr$ neighbours in 
$A \backslash (\cup_{h=1}^i A_h \cup A_{i+1}^+)$. This implies that $(P_2(i+1))$ holds. Also note that $(P_1(i+1))$ follows from the fact that $(P_4i)$ holds and by definition that $|B^+_{i+1} \backslash B_{i+1}| \leq (n-1)r$.

\end{itemize}

%\begin{figure}[H]
%	\centering
%	\includegraphics[scale=0.6]{proofstar01}
%	\caption{Construction of the bag $B_3^+$}
%	\label{fig:starsproof01}
%\end{figure}

	\begin{figure}[H]
       	
       	\begin{subfigure}[b]{0.5\textwidth}
	\begin{centering}           
			\scalebox{.65}
			{
			\begin{tikzpicture}[scale=.6,auto=left]

%%%%%%%%%%%%%%%%%%%%%%%%%%%%%%%%%%%%%%%%%%%%
	
%%%%%%%%%%%%%%%%%%%%%%%%%%%%%%%%%%%%%%%%%%%%%%%

%\tikzstyle{point}=[circle,fill=white!1,text=white,inner sep=0.4mm,draw]
%\tikzstyle{point}=[circle,fill=black,inner sep=0.3mm]

\node[point] at (-3.5, 7) {};
\node[point] at (-2.5, 7) {};
\node[point] at (-1.5, 7) {};

\node[point] at (2.3, 0) {};
\node[point] at (1.3, 0) {};
\node[point] at (0.3, 0) {};

\node[point] at (-1.6, 3.5) {};
\node[point] at (-0.6, 3.5) {};
\node[point] at (0.4, 3.5) {};

	\foreach{\h} in {7} 
	{
	%%%%%%%              Shades            %%%%%%
		
		\foreach{\start} in {7}
			{\fill[blue!15!white] (\start-4,0) -- (\start-1,\h) -- (\start+5,\h) -- (\start+2,0) -- cycle;}
		\foreach{\start} in {0, 7}
			{\fill[blue!15!white] (\start+3,0) -- (\start-1,\h) -- (\start+5,\h) -- (\start+9,0) -- cycle;}
		
		\foreach{\start} in {7}
			{
			\draw (\start+-4, 0) -- (\start-1, \h);
			\draw (\start+2, 0) -- (\start+5, \h);
			}
		\foreach{\start} in {0, 7}
			{
			\draw (\start+-1, \h) -- (\start+3, 0);
			\draw (\start+5, \h) -- (\start+9, 0);		
			}

	%%%%%%%%%%%   Ellipses     %%%%%%%%%%%%
		\foreach \start in {0,7}
		{\draw[fill=white] (\start+2,\h) ellipse (3cm and 1cm);	}
		
		\foreach \start in {0,7}
		{\draw[fill=white] (\start+6,0) ellipse (3cm and 1cm);}

		\foreach \z in {}
			{
			\draw (\z+5,-1) .. controls (\z+5.5,0)  .. (\z+5,1);
			\draw (\z+7,-1) .. controls (\z+8.5,-0.3) and (\z+6.5, 0.3) .. (\z+7,1);
			}
		\foreach \z in {}
			{
			\draw (\z+3+5,\h-1) .. controls (\z+3+5.5,\h)  .. (\z+3+5,1+\h);
			\draw (\z+3+7,\h-1) .. controls (\z+3+8.5,\h-0.3) and (\z+3+6.5, 0.3+\h) .. (\z+3+7,1+\h);
			}

%%%%%%%%%%%%%%%% square %%%%%%%%%%%%%%%%%%%

\foreach \x/\y in {10/6.7}
{
\draw (\x, \y) -- (\x+1, \y) -- (\x+1, \y+0.5) -- (\x, \y+0.5) -- cycle ;
%\draw (14,-1)--(14, 1);
}

\foreach \z/\y in {-1/-0.5}
		{	
		\draw (2, \h+1.5+\y) node[font=\fontsize{12}{12}, anchor=south] {$A_{i-1}$};
		\draw (2+7, \h+1.5+\y) node[font=\fontsize{12}{12}, anchor=south] {$A_{i}^+$};
		\draw (2+8.5, \h+0.5+\y) node[font=\fontsize{12}{12}, anchor=south] {$A_{i}'$};
		}

\foreach \z/\y in {1/0.5}
		{
		% \draw (-3+2, -1.6+\y) node[font=\fontsize{12}{12}, anchor=north] {$B_{1}$};
		\draw (-3+2+7, -1.6+\y) node[font=\fontsize{12}{12}, anchor=north] {$B_{i}$};
		\draw (-3+2+14, -1.6+\y) node[font=\fontsize{12}{12}, anchor=north] {$B_{i+1}^+$};
		% \draw (-3+2+12, -1.6+1.6+\y) node[font=\fontsize{12}{12}, anchor=north] {$B_{3}^c$};
		% \draw (-3+2+16, -1.6+1.6+\y) node[font=\fontsize{12}{12}, anchor=north] {$B_{3}^s$};
		}

		\node[w_vertex] (11) at (5+7, 0 ) {};		
		\node[w_vertex] (12) at (6.7+7, 0) {};
		\node[w_vertex] (13) at (6+7, 0) {};
		\node[w_vertex](14) at (5.5+7, 0){};
		
		\node[w_vertex](15) at (7.4+7, 0){};
		\node[w_vertex](16) at (8+7, 0){};
		\node[w_vertex](17) at (4.3+7, 0){};
		\node[w_vertex](18) at (3.5+7, 0){};

		%\node[w_vertex] (15) at (7+7.5, 0.5 ) {};
		%\node[w_vertex] (16) at (7+8.2, 0.5) {};
		\draw (11) node[anchor=south] {};
		\draw (12) node[anchor=south] {};
		\draw (13) node[anchor=south] {};

		\draw [dashed] (11) -- (7+3,7);
		\draw [dashed] (11) -- (7+3.4, 7);
		\draw [dashed] (11) -- (7+3.9, 7);
		
		\draw [dashed] (12) -- (7+3.4,7);
		\draw [dashed] (12) -- (7+3.7,7);
		\draw [dashed] (12) -- (7+3,7);	

		\draw [dashed] (13) -- (7+3.7,7);
		\draw [dashed] (13) -- (7+3.4, 7);
		\draw [dashed] (13) -- (7+3.1, 7);

		\draw [dashed] (14) -- (7+3, 7);
		\draw [dashed] (14) -- (7+3.5, 7);
		\draw [dashed] (14) -- (7+3.9, 7);

		\draw [dashed] (15) -- (7+3,7);
		\draw [dashed] (15) -- (7+3.4, 7);
		\draw [dashed] (15) -- (7+3.9, 7);
		
		\draw [dashed] (16) -- (7+3.4,7);
		\draw [dashed] (16) -- (7+3.7,7);
		\draw [dashed] (16) -- (7+3,7);	

		\draw [dashed] (17) -- (7+3.7,7);
		\draw [dashed] (17) -- (7+3.4, 7);
		\draw [dashed] (17) -- (7+3.1, 7);

		\draw [dashed] (18) -- (7+3.2, 7);
		\draw [dashed] (18) -- (7+3.5, 7);
		\draw [dashed] (18) -- (7+3.9, 7);

		%\draw [dashed] (15) -- (7-4, 7);
		%\draw [dashed] (15) -- (7+1, 7);
		%\draw [dashed] (15) -- (7+0, 7);
		
		%\draw [dashed] (16) -- (7-3, 7);
		%\draw [dashed] (16) -- (7+1.5, 7);
		%\draw [dashed] (16) -- (7+2, 7);

	}
	\end{tikzpicture}
	
			}
	\end{centering}           
     	\caption*{}
                	%\label{fig:coP6_fig1}
        	\end{subfigure}%
        ~
        	\begin{subfigure}[b]{0.5\textwidth}

\begin{centering}
				\scalebox{.65}
				{
				\begin{tikzpicture}[scale=.6,auto=left]

%%%%%%%%%%%%%%%%%%%%%%%%%%%%%%%%%%%%%%%%%%%%
	
%%%%%%%%%%%%%%%%%%%%%%%%%%%%%%%%%%%%%%%%%%%%%%%

	\foreach{\h} in {7} 
	{
	%%%%%%%              Shades            %%%%%%
		
		\foreach{\start} in {0, 7}
			{\fill[blue!15!white] (\start-4,0) -- (\start-1,\h) -- (\start+5,\h) -- (\start+2,0) -- cycle;}
		\foreach{\start} in {0}
			{\fill[blue!15!white] (\start+3,0) -- (\start-1,\h) -- (\start+5,\h) -- (\start+9,0) -- cycle;}
		
		\foreach{\start} in {0, 7}
			{
			\draw (\start+-4, 0) -- (\start-1, \h);
			\draw (\start+2, 0) -- (\start+5, \h);
			}
		\foreach{\start} in {0}
			{
			\draw (\start+-1, \h) -- (\start+3, 0);
			\draw (\start+5, \h) -- (\start+9, 0);		
			}

	%%%%%%%%%%%   Ellipses     %%%%%%%%%%%%
		\foreach \start in {0,7}
		{\draw[fill=white] (\start+2,\h) ellipse (3cm and 1cm);	}
		
		\foreach \start in {-7, 0}
		{\draw[fill=white] (\start+6,0) ellipse (3cm and 1cm);}

		\foreach \z in {}
			{
			\draw (\z+5,-1) .. controls (\z+5.5,0)  .. (\z+5,1);
			\draw (\z+7,-1) .. controls (\z+8.5,-0.3) and (\z+6.5, 0.3) .. (\z+7,1);
			}
		\foreach \z in {}
			{
			\draw (\z+3+5,\h-1) .. controls (\z+3+5.5,\h)  .. (\z+3+5,1+\h);
			\draw (\z+3+7,\h-1) .. controls (\z+3+8.5,\h-0.3) and (\z+3+6.5, 0.3+\h) .. (\z+3+7,1+\h);
			}

%%%%%%%%%%%%%%%% square %%%%%%%%%%%%%%%%%%%

\node[point] at (-3.5, 7) {};
\node[point] at (-2.5, 7) {};
\node[point] at (-1.5, 7) {};

\node[point] at (-4.7, 0) {};
\node[point] at (-5.7, 0) {};
\node[point] at (-6.7, 0) {};

\node[point] at (-5.1, 3.5) {};
\node[point] at (-4.1, 3.5) {};
\node[point] at (-3.1, 3.5) {};

\foreach \x/\y in {7/0}
{
\draw (\x, \y) -- (\x+1, \y) -- (\x+1, \y+0.5) -- (\x, \y+0.5) -- cycle ;
%\draw (14,-1)--(14, 1);
}

\foreach \z/\y in {-1/-0.5}
		{	
		\draw (2, \h+1.5+\y) node[font=\fontsize{12}{12}, anchor=south] {$A_{i}$};
		\draw (2+7, \h+1.5+\y) node[font=\fontsize{12}{12}, anchor=south] {$A_{i+1}^+$};
	%	\draw (2+8.5, \h+0.5+\y) node[font=\fontsize{12}{12}, anchor=south] {$A_{2}'$};
		}

\foreach \z/\y in {1/0.5}
		{
		\draw (-3+2, -1.6+\y) node[font=\fontsize{12}{12}, anchor=north] {$B_{i}$};
		\draw (-3+2+7, -1.6+\y) node[font=\fontsize{12}{12}, anchor=north] {$B_{i+1}^+$};
		\draw (-1.5+2+7, -0.4+\y) node[font=\fontsize{12}{12}, anchor=north] {$B_{i+1}'$};
		% \draw (-3+2+14, -1.6+\y) node[font=\fontsize{12}{12}, anchor=north] {$B_{3}^+$};
		% \draw (-3+2+12, -1.6+1.6+\y) node[font=\fontsize{12}{12}, anchor=north] {$B_{3}^c$};
		% \draw (-3+2+16, -1.6+1.6+\y) node[font=\fontsize{12}{12}, anchor=north] {$B_{3}^s$};
		}

		\node[w_vertex] (11) at (5+4, 7 ) {};		
		\node[w_vertex] (12) at (6.7+4, 7) {};
		\node[w_vertex] (13) at (6+4, 7) {};
		\node[w_vertex](14) at (5.5+4, 7){};

		\node[w_vertex] (15) at (4.3+4, 7 ) {};		
		\node[w_vertex] (16) at (3.6+4, 7) {};
		\node[w_vertex] (17) at (2.8+4, 7) {};
		\node[w_vertex](18) at (7.3+4, 7){};

		%\node[w_vertex] (15) at (7+7.5, 0.5 ) {};
		%\node[w_vertex] (16) at (7+8.2, 0.5) {};
		\draw (11) node[anchor=south] {};
		\draw (12) node[anchor=south] {};
		\draw (13) node[anchor=south] {};

		\draw  (11) -- (4+3,0.2);
		\draw  (11) -- (4+3.4, 0.2);
		\draw  (11) -- (4+3.9, 0.2);
		
		\draw (12) -- (4+3.4,0.2);
		\draw (12) -- (4+3.7,0.2);
		\draw (12) -- (4+3,0.2);	

		\draw (13) -- (4+3.7, 0.2);
		\draw (13) -- (4+3.4, 0.2);
		\draw (13) -- (4+3.1, 0.2);

		\draw (14) -- (4+3, 0.2);
		\draw (14) -- (4+3.5, 0.2);
		\draw (14) -- (4+3.9, 0.2);

		\draw  (15) -- (4+3,0.2);
		\draw  (15) -- (4+3.4, 0.2);
		\draw  (15) -- (4+3.9, 0.2);
		
		\draw (16) -- (4+3.4,0.2);
		\draw (16) -- (4+3.7,0.2);
		\draw (16) -- (4+3,0.2);	

		\draw (17) -- (4+3.7, 0.2);
		\draw (17) -- (4+3.4, 0.2);
		\draw (17) -- (4+3.1, 0.2);

		\draw (18) -- (4+3.2, 0.2);
		\draw (18) -- (4+3.5, 0.2);
		\draw (18) -- (4+3.9, 0.2);

		%\draw  [dashed] (15) -- (7-4, 7);
		%\draw [dashed] (15) -- (7+1, 7);
		%\draw [dashed] (15) -- (7+0, 7);
		
		%\draw [dashed] (16) -- (7-3, 7);
		%\draw [dashed] (16) -- (7+1.5, 7);
		%\draw [dashed] (16) -- (7+2, 7);

	}
	\end{tikzpicture}
	
				}
\end{centering}
                	\caption*{}
                	%\label{fig:coP6_fig2}
        	\end{subfigure}

       	\caption{Construction of the bag $B_{i+1}^+$ (left) and $A_{i+1}^+$ (right)}\label{starconstruction}
	\end{figure}
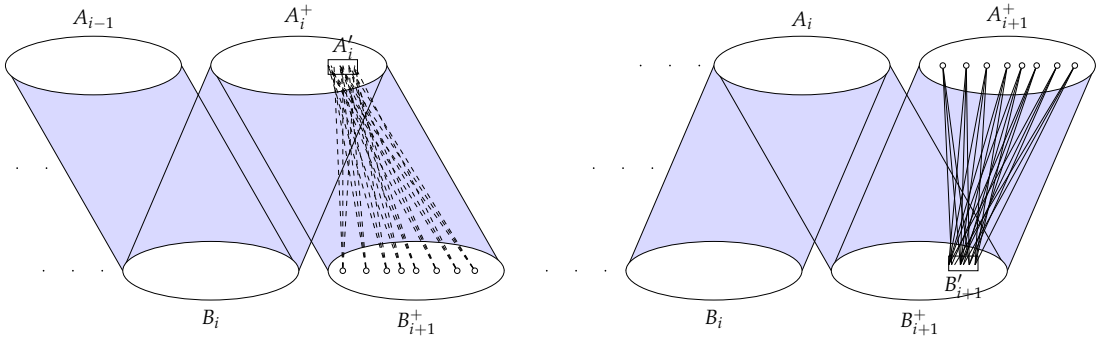

\iffalse
\begin{figure}[H]
  \centering
  \begin{minipage}[b]{0.45\textwidth}
    \includegraphics[width=\textwidth]{proofstar01a}
 %   \caption{Flower one.}
  \end{minipage}
  \hfill
  \begin{minipage}[b]{0.45\textwidth}
    \includegraphics[width=\textwidth]{proofstar01b}
%    \caption{Flower two.}
  \end{minipage}

\caption{Construction of the bag $B_{i+1}^+$ (left) and $A_{i+1}^+$ (right)}
\end{figure}
\fi

In the particular cases where the newly created bag $A_i^+$ (or $B_{i+1}^+$) have size less than $(n-1)r$, we set $A_i$ (or $B_i$) to be an empty set and start the process afresh from a newly selected vertex of the minimum degree. We will cover these two cases $|A_i^+|<(n-1)r$ and $|B_i^+|<(n-1)r$ separately. 

Suppose first that $A_1, B_1, A_2, \ldots, B_i, A_i^+$ have been constructed and $|A_i^+|<(n-1)r$. We will show how to construct bags $A_i$, $B_{i+1}^+$, $B_{i+1}$ and $A_{i+1}^+$. Assuming that $(P_2i)$ holds, we will show that $(P_3i)$, $(P_4i)$, $(P_1(i+1))$ and $(P_2(i+1))$ hold. 

\begin{itemize}
\item Construction of $A_i$, $B_{i+1}^+$, $B_{i+1}$ and $A_{i+1}^+$ or termination of the procedure when $|A_i^+|<(n-1)r$.

In this case, we set $A_i= \emptyset$. If $B \backslash \cup_{h=1}^i B_h=\emptyset$, we let $B_{i+1}=\emptyset$ and $A_{i+1}=A \backslash \cup_{h=1}^i A_h$, set $z=i+1$ and terminate the procedure. Else, if $B \backslash \cup_{h=1}^i B_h \neq \emptyset$ we set $B_{i+1}^+=B_{i+1}=\{v\}$, where $v$ is a vertex with minimal degree in $B \backslash \cup_{h=1}^i B_h$. Then set $A_{i+1}^+= N(B_{i+1}) \backslash \cup_{h=1}^i A_i$. Note that $(P_4i)$ holds trivially because $A_i = \emptyset$ and $(P_3i)$ holds because we assume that $(P_2i)$ holds and $|A_i^+ \backslash A_i| < (n-1)r$. Further, $(P_1(i+1))$ holds because $(P_4i)$ holds and $B_{i+1}^+ =B_{i+1}$ and $(P_2(i+1))$ holds because any vertex in $B_i$ has degree $0$ in $A \backslash (A_1 \cup A_2 \cup \ldots \cup A_i^+)$. 

\end{itemize}

Finally suppose that $A_1, B_1, A_2, \ldots, B_{i+1}^+$ have been constructed and $|B_{i+1}^+|<(n-1)r$. We will show how to construct $B_{i+1}, A_{i+1}^+, A_{i+1}, B_{i+2}^+, B_{i+2}$ and $A_{i+2}^+$. Assuming that $(P_4i)$ holds, we will show that $(P_1(i+1)), (P_2(i+1)), (P_3(i+1)), (P_4(i+1)), (P_1(i+2)), (P_2(i+2))$ hold.

\begin{itemize}
\item Construction of $B_{i+1}, A_{i+1}^+, A_{i+1}, B_{i+2}^+, B_{i+2}$ and $A_{i+2}^+$ or termination of the procedure when $|B_{i+1}^+| < (n-1)r$.

We set $B_{i+1} = \emptyset$, $A_{i+1}^+ = A_{i+1}=\emptyset$.
If $B \backslash \cup_{h=1}^{i+1} B_h=\emptyset$, we let $B_{i+2}=\emptyset$ and $A_{i+2}=A \backslash \cup_{h=1}^{i+1} A_h$, set $z=i+2$ and terminate the procedure. Else, if $B \backslash \cup_{h=1}^{i+1} B_h \neq \emptyset$ we set $B_{i+2}^+=B_{i+2}=\{v\}$, where $v$ is the vertex with the smallest degree in $B \backslash B_1 \cup \ldots \cup B_{i+1}$. We also set $A_{i+2}^+=N(B_{i+1}) \backslash A_1 \cup \ldots \cup A_{i+1}$. One can easily verify that assuming $(P_4i)$ holds, one can deduce that $(P_1(i+1)), (P_2(i+1)), (P_3(i+1)), (P_4(i+1)), (P_1(i+2)), (P_2(i+2))$ all hold.  
\end{itemize}

Hence, by induction it follows that $(P_1i), (P_2i), (P_3i), (P_4i)$ hold for any $i$. This means that the partition satisfies the ''forward edges'' conditions of (*) and (**): $A_i$ is $d$-joined to $\cup_{h=1}^z B_h$ and $B_i$ is $d$-co-joined to $\cup_{h=i+1}^z A_h$ for any $i$. However, the ''backward edges'' conditions of (*) and (**) may not be satisfied. Indeed, there might be some vertices in $A_i$ having more than $d$ neighbours in $\cup_{h=1}^{i-1} B_h$ or some vertices in $B_i$ having more than $d$ non-neighbours in $\cup_{h=1}^{i-2} A_h$. We will show how to remove these vertices from the partition.  

\begin{definition}
Let $A=A_1 \cup A_2 \cup \ldots \cup A_z$, $B=B_1 \cup B_2 \cup \ldots \cup B_z$ be the partition obtained by the procedure described above. Define $M$ to be the set of all vertices $v \in A$, such that $v \in A_i$ for some $i$ and $v$ has more than $d$ neighbours in $\cup_{h=1}^{i-1} B_h$. Define $N$ to be the set of all vertices $v \in B$, such that $v \in B_i$ for some $i$ and $v$ has $d$ non-neighbours in $\cup_{h=1}^{i-2} A_h$.      
\end{definition}

The following Lemma is an immediate consequence of this definition and the discussion above:

\begin{lemma}\label{unmarkedvertices}
The partition $A \backslash M = (A_1 \backslash M) \cup \ldots \cup (A_z \backslash M)$, $B \backslash N = (B_1 \backslash N) \cup \ldots \cup (B_z \backslash N)$ is a $d$-template of $G[A \backslash M, B \backslash N]$.
\end{lemma}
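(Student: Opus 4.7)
The plan is to verify the two conditions $(\ast)$ and $(\ast\ast)$ of Definition~\ref{partition3} directly from the properties $(P_1i)$--$(P_4i)$ established during the partition procedure, together with the definitions of the sets $M$ and $N$. I would first note that since $(P_1i),(P_2i),(P_3i),(P_4i)$ have been shown to hold for every $i$ by the inductive construction described above, the ``forward'' halves of $(\ast)$ and $(\ast\ast)$ are already in hand: condition $(P_1(i+1))$ gives that every vertex of $A_i$ has at most $d$ non-neighbours in $B\setminus(B_1\cup\cdots\cup B_{i+1})=\bigcup_{h=i+2}^z B_h$, which is exactly the $d$-join of $A_i$ to $\bigcup_{h=i+2}^z B_h$; and $(P_3i)$ gives that every vertex of $B_i$ has at most $d$ neighbours in $A\setminus(A_1\cup\cdots\cup A_i)=\bigcup_{h=i+1}^z A_h$, which is the $d$-co-join of $B_i$ to $\bigcup_{h=i+1}^z A_h$.

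Next I would handle the ``backward'' halves, and this is precisely where the sets $M$ and $N$ come in. By the very definition of $M$, a vertex $v\in A_i\setminus M$ has at most $d$ neighbours in $\bigcup_{h=1}^{i-1} B_h$, which is exactly the condition that $A_i\setminus M$ is $d$-co-joined to $\bigcup_{h=1}^{i-1} B_h$. Analogously, by the definition of $N$, a vertex $v\in B_i\setminus N$ has at most $d$ non-neighbours in $\bigcup_{h=1}^{i-2} A_h$, which yields the $d$-join of $B_i\setminus N$ to $\bigcup_{h=1}^{i-2} A_h$.

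Finally, I would point out the routine observation that restricting to the induced subgraph $G[A\setminus M, B\setminus N]$ only makes all four conditions easier: if $v\in A_i\setminus M$ has at most $d$ non-neighbours (resp.\ neighbours) in some set $S\subseteq B$, then it has at most $d$ non-neighbours (resp.\ neighbours) in any subset $S\cap(B\setminus N)\subseteq S$, since both the neighbours and non-neighbours in a smaller set form subsets of those in the larger set; the symmetric statement applies to vertices in $B\setminus N$. Hence all four inequalities in $(\ast)$ and $(\ast\ast)$ are preserved in $G[A\setminus M, B\setminus N]$, completing the proof.

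Since there is no real combinatorial obstacle here, the only care needed is in the bookkeeping: one must match up the index shifts between $(P_1i)$ and the statement of $(\ast)$ correctly, and observe that $M$ and $N$ were defined precisely to cure the backward conditions, which the partition procedure itself does not guarantee.
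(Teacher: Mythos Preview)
Your proof is correct and follows exactly the approach the paper intends: the paper states that the lemma is ``an immediate consequence of this definition and the discussion above,'' and your write-up simply spells out that immediate consequence---using $(P_1(i{+}1))$ and $(P_3i)$ for the forward halves of $(\ast)$ and $(\ast\ast)$, the definitions of $M$ and $N$ for the backward halves, and the trivial monotonicity under restriction to $G[A\setminus M,B\setminus N]$.
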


The following lemma shows that vertices in $M$ can be partitioned into bounded number of parts such that a graph induced by any part and set B is  $2\Lambda_{2d}$-free. 

\begin{lemma} \label{markedvertices}
Vertices $M$ can be partitioned into $c=3nkd^2$ bags $M=M_1 \cup M_2 \cup \ldots \cup M_c$ such that the graph $G[M_i, B]$ is $2\Lambda_{2d}$-free for all $i$. Similarly, vertices $N$ can be partitioned into $c=3nkd^2$ bags $N=N_1 \cup N_2 \cup \ldots \cup N_c$ such that the graph $G[A, N_i]$ is $2\up_{2d}$-free for all $i$.
\end{lemma}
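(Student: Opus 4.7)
The strategy is to show that $|M|$ is in fact bounded by $n''kd^2\le nkd^2\le 3nkd^2$ (and analogously for $|N|$), whereupon the required partition is obtained by taking singletons. Each singleton bag $\{v\}\subseteq M$ induces the bipartite graph $G[\{v\},B]$, which has only one vertex on the $M$-side and hence cannot contain $2\Lambda_{2d}$ (the latter requires two distinct centres on the $A$-side); padding with empty sets brings the partition to exactly $c=3nkd^2$ parts. The symmetric statement for $N$ follows from the analogous bound $|N|\le n'kd^2$ together with singleton bags $\{v\}$ for $v\in N$, since $G[A,\{v\}]$ has only one vertex on the $B$-side and hence trivially avoids $2\up_{2d}$.

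To bound $|M|$, I would introduce the auxiliary bipartite graph $H=(M,B,\mathcal{E}_H)$ whose edges are the \emph{backward} $G$-edges incident to $M$: $(v,b)\in\mathcal{E}_H$ iff $v\in M\cap A_i$, $b\in B_h$ with $h<i$, and $vb\in\mathcal{E}$. Two properties of $H$ are immediate from the $d$-template construction of Section~\ref{partitionprocedure}. First, every $b\in B$ has $H$-degree at most $d$: for $b\in B_h$, property $(P_3 h)$ says that $b$ has at most $d$ neighbours in $A\setminus(A_1\cup\dots\cup A_h)$, which contains $M\cap A_{>h}$, and $M\cap A_{\le h}$ is irrelevant in $H$ because any such vertex $v$ lies in $A_{i(v)}$ with $i(v)\le h$, so $(v,b)\notin\mathcal{E}_H$. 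Second, every $v\in M$ has $H$-degree strictly greater than $d\ge k$, by the very definition of $M$. Granted also that $H$ is $n''\Lambda_k$-free, Lemma~\ref{boundeddegree} applied to $H$ bounds the number of $v\in M$ with $H$-degree at least $k$ by $n''kd^2$; since this applies to every $v\in M$, one concludes $|M|\le n''kd^2$.

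The principal obstacle is establishing the $n''\Lambda_k$-freeness of $H$. Because $H$ is a (non-induced) subgraph of $G[M,B]$, an induced copy of $n''\Lambda_k$ in $H$ need not lift to an induced copy in $G$: the leaves that appear $H$-private to one centre may still be $G$-adjacent to another centre through an edge that is not backward. The way to rescue the argument is to replay the Observation preceding Lemma~\ref{covering} inside $H$ while confining all leaf choices to the common initial segment $B_{<i_{\min}}$, where $i_{\min}=\min_j i(v_j)$ is the smallest bag index among the candidate centres $v_1,\dots,v_{n''}$. On this segment the edge sets of $H$ and $G$ coincide for every centre (since $i(v_j)\ge i_{\min}$ implies $B_{<i_{\min}}\subseteq B_{<i(v_j)}$), so any $H$-private $k$-leaf choice inside $B_{<i_{\min}}$ automatically produces $G$-private leaves, and the resulting configuration is an induced $n''\Lambda_k$ in $G$, contradicting $G\in Free(n''\Lambda_k)$. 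Keeping enough $G$-neighbours of every centre inside this common segment, and not merely inside its own segment $B_{<i(v_j)}$, is the subtle combinatorial point: it is handled by invoking the maximum-covering step of the Observation with the centres of $M$ ordered by increasing $i$, so that the smallest index centre dictates the common segment and its large backward neighbourhood forces enough room for the remaining private leaves.

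The argument for $N$ is the bipartite-complement analogue: define the auxiliary bipartite graph whose edges are the \emph{forward} non-edges from $A$ into $N$, use property $(P_2 i)$ to bound $A$-side co-degrees by $d$, invoke Lemma~\ref{boundeddegree} after bipartite complementation with $n'\up_k$-freeness in place of $n''\Lambda_k$-freeness, and conclude $|N|\le n'kd^2\le 3nkd^2$; singletons again suffice because $G[A,\{v\}]$ contains only one $B$-side vertex.
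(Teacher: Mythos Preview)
Your strategy of bounding $|M|$ outright and using singletons differs fundamentally from the paper's approach, and unfortunately it does not go through. The paper does \emph{not} bound $|M|$; instead it assigns to each $v\in M\cap A_i$ a trace interval $\{j,\dots,i\}$ (where $j$ is the least index with $>d$ backward neighbours in $\cup_{h<j}B_h$), shows that for each fixed $l$ the set $T_l=\{v:l\in\mathrm{tr}(v)\}$ has size at most $nkd^2$ by applying Lemma~\ref{boundeddegree} to the genuine induced subgraph $G[T_l,\cup_{h<l}B_h]$, and then greedily colours $M$ with $nkd^2$ colours so that traces within a colour are pairwise disjoint, followed by a further split into three so that traces are separated by at least two indices. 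The point is that $|M|$ itself can grow with the length $z$ of the template; only the ``local multiplicity'' $|T_l|$ is bounded.

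The gap in your argument is precisely the step where you claim the auxiliary backward-edge graph $H$ can be treated as $n''\Lambda_k$-free. Consider the situation where $v_i\in M\cap A_i$ for $i=2,\dots,z$ and each $v_i$ has its $>d$ backward $G$-neighbours concentrated entirely in $B_{i-1}$. In $H$ these stars are pairwise vertex-disjoint (the $H$-neighbourhoods lie in different $B_{i-1}$'s), so $H$ contains $(z-1)\Lambda_{d+1}\supseteq n''\Lambda_k$ for large $z$. Yet in $G$ the forward-edge properties $(P_1)$--$(P_4)$ force $v_j$ to be $G$-adjacent to all but $d$ vertices of $B_{i-1}$ whenever $j<i-1$, so no induced $n''\Lambda_k$ arises in $G$ from these centres. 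Your proposed rescue---restricting leaves to $B_{<i_{\min}}$---collapses here: every centre $v_j$ with $j>i_{\min}$ has \emph{zero} backward neighbours in $B_{<i_{\min}}$, so there are no private leaves to select, and the ``maximum-covering step'' of the Observation gives you nothing. Thus Lemma~\ref{boundeddegree} does not apply to $H$, $|M|$ is not bounded by $nkd^2$, and the singleton partition is insufficient. The same obstruction applies verbatim to your treatment of $N$.
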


\begin{proof}

Let us start with a vertex $v \in M$. We will define a trace of a vertex $v$. First recall that $v \in M$ means that $v \in A_i$ for some $i$ such that $v$ has more than $d$ neighbours in $\cup_{h=1}^{i-1} B_h$. Now we find the smallest value $j$, such that $v$ has more than $d$ neighbours in $\cup_{h=1}^{j-1} B_h$. We define the trace of $v$ to be the interval $tr(v)=\{j, j+1, \ldots, i\}$. In the next two paragraphs we will explore several properties of the trace.  

Consider the set $T_l=\{v \in M: l \in tr(v)\}$. We claim that $|T_l| \leq nkd^2$ for all $l$. Consider the bipartite subgraph of $G$ induced by $\cup_{h=1}^{l-1} B_h$ and $T_l$. Note that $T_l \subseteq \cup_{h=l}^z A_h$. By $(P_3i)$ we have that each vertex of $\cup_{h=1}^{l-1} B_h$ has at most $d$ neighbours in $T_l \subseteq \cup_{h=l}^z A_h$. As the graph $G$ is $n \Lambda_k$ free, Lemma~\ref{boundeddegree} applies, so there are at most $nkd^2$ vertices in $T_l$ that have degree at least $k$ in $\cup_{h=1}^{l-1} B_h$. But by definition of $T_l$, all vertices of $T_l$ have degree at least $d \geq k$ in $\cup_{h=1}^{l-1} B_h$. Hence, we conclude that $|T_l| \leq nkd^2$.

Now, consider two vertices $v, v' \in M$ such that $tr(v)=\{j, j+1, \ldots, i \}$ and $tr(v')=\{j', j'+1, \ldots, i'\}$ with $j'>i$ such that $|j'-i| \geq 3$. Then, we claim that $|N(v') \backslash N(v)| \leq 2d$. Indeed, $N(v)$ has at most $d$ non-neighbours in $B_{i+2} \cup B_{i+3}\cup \ldots \cup B_z$ by $(P_1 i)$. Also, as $j'=min\{tr(v')\}$, it follows that $N(v')$ has at most $d$ neighbours in $B_1 \cup B_2 \cup \ldots \cup B_{j'-2}$. As $j' \geq i+3$, $N(v')$ has at most $d$ neighbours in $B_1 \cup B_2 \cup \ldots \cup B_{i+1}$. Hence the claim follows.  

We will now show that we can partition $M$ into $c=nkd^2$ subsets $M=M_1' \cup M_2' \cup \ldots \cup M_c'$ such that for any $i$ and any $v, v' \in M'_i$, we have $tr(v) \cap tr(v') = \emptyset$. One way of achieving it, is by ordering the vertices of $M=\{v_1, v_2, \ldots, v_m\}$ such that if $i<j$, then $min\{tr(v_i)\} \leq min \{tr(v_j)\}$. Having achieved this order, we set $M'_i = \emptyset$ for $1 \leq i \leq c$ and follow the following procedure. We place $v_1$ into $M_1'$, i.e. we set $M_1':=\{v_1\}$. Suppose now we placed $v_1, v_2 \ldots v_{i-1}$ for some $i \geq 2$. Then we place $v_i$ in some $M_j'$ such that $tr(v_i) \cap tr(v) = \emptyset$ for every $v \in M_j'$. We claim that we can always find such a set $M_j'$. Indeed, suppose for contradiction that we cannnot find such set $M_j'$, i.e. suppose that for every $j$ there is a vertex $w_j \in M_j'$ such that $tr(w_j) \cap tr(v_i) \neq \emptyset$. By the assumed ordering of vertices we have $min\{tr(w_j)\} \leq min\{tr (v_i)\}$, and hence as $tr(w_j) \cap tr(v_i) \neq \emptyset$ we conclude that $min\{tr(v_i)\}$ is contained in $tr(w_j)$ for all $j=1, 2, 3 \ldots, c$. Thus we have at least $c+1$ vertices, namely $\{v_1, w_1, w_2, \ldots, w_c\}$ whose trace contains the number $l=min\{tr(v_i)\}$. This contradicts the fact that $|T_l|\leq c$ that we proved earlier. Thus this procedure will partition all the vertices into $c$ sets such that traces of the vertices in each set are all disjoint. 

Finally, we can subpartition each set $M_i'$ into three to ensure that there are at least two natural numbers in the interval between any two traces. One way of doing this is by listing the vertices of $M_i'=\{u_1, u_2, \ldots, u_l\}$ such that $\min\{tr(u_1)\} \leq \min\{tr(u_2)\} \leq \ldots \leq \min\{tr(u_l)\}$ and then setting $M_{3i-2}=\{u_1, u_4, u_7 \ldots\}$, $M_{3i-1}=\{u_2, u_5, u_8 \ldots \}$ and $M_{3i}=\{u_3, u_6, u_9, \ldots \}$. In this way we obtain the sets $M_1, M_2, \ldots, M_{3c}$ which satisfy the conditions of the lemma.
\end{proof}

\subsection{Conditions (***) and (****)}

Now we will refine the partition obtained in the previous section $A = A_1 \cup A_2 \cup \ldots \cup A_z$, $B = B_1 \cup B_2 \cup \ldots \cup B_z$ so that the resulting refinement satisfies the conditions (***) and (****). Note that the partition $A = A_1 \cup A_2 \cup \ldots \cup A_z$, $B = B_1 \cup B_2 \cup \ldots \cup B_z$ is not a $d$-template, but, as described in the previous section, could be made into one by removing vertex sets $M$ and $N$. Therefore, having obtained the refinement satisfying conditions (***) and (****), we will later restrict it to the partition $A \backslash M = A_1 \backslash M \cup A_2 \backslash M \cup \ldots \cup A_z \backslash M$, $B \backslash N = B_1 \backslash N \cup B_2 \backslash N \cup \ldots \cup B_z \backslash N$ to deduce a refined $d$-template for $G[A \backslash M, B \backslash N]$. 

Recall that the partition procedure starts with $B_1^+=B_1=\{v\}$ followed by $A_1^+=N(B_1)$. The process continues creating new bags depending on the previous one in alternating order until we set some bag to be $\emptyset$. Then we pick again $B_i^+=B_i=\{w\}$ and $A_i^+=N(B_i)$. We will denote the indices of all such starting points by the set $I$, i.e. \[I=\{i : |B_i^+|=|B_i|=1\}.\] The following Lemmas~\ref{initialcase} and~\ref{generalcase} provide a subpartition such that each bipartite graph $G[A_{i}, B_{i+1}]$ satisfies the conditions (***) and (****). The lemmas consider separately the two cases depending on whether $i \in I$ or $i \notin I$. The proofs for the two lemmas are accompanied with the Figures~\ref{condition3a} and~\ref{condition3b}. We note that in the proofs we will be using subsets $A_i, A_i^+, A_i'$ and $B_i, B_i^+, B_i'$ as defined in Section~\ref{partitionprocedure}. All the other vertex subsets, for instance, $B_{i+1}^c$ and $B_{i+1}^s$ in Lemma~\ref{initialcase} or $B_{i+1}^{sc}, B_{i+1}^{ss}, A_{i-1}^{cc}$ in Lemma~\ref{generalcase} are locally defined and with the exception of $B_{i+1}^{ss}$ are not used in the subsequent results. We recall from the previous section that $G$ is defined to be $(n'\up_k, n''\Lambda_k, \overline{m'\up_k}^{bip}, \overline{m''\Lambda_k}^{bip})$-free graph with $n',n'',m', m'' \geq 3$ and $n=\max(n', n'', m',m'')$. In this section we will set $r=kn$ in our general $r$-covering procedure described in the previous section. Thus, in particular $d=(n-1)r+kr=(n-1)kn+k(kn)$ throughout this section. We start with the initial case when $i \in I$. 

\begin{lemma}\label{initialcase}
 For $i \in I$, we can split $B_{i+1}$ into $p \leq \pi_1 = \binom{nk}{k}+nk^5$ parts $B_{i+1}=B_{(i+1)1} \cup B_{(i+1)2} \cup \ldots B_{(i+1)p}$ such that $G[A_i, B_{(i+1)l}]$ is $Free(\overline{(m'-1)\up_k}^{bip})$ for all $1 \leq l \leq p$.
\end{lemma}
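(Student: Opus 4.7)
The plan is to use an extension argument: any induced $\overline{(m'-1)\up_k}^{bip}$ inside a part of $B_{i+1}$, with centres $b_1,\ldots,b_{m'-1}$ and disjoint $k$-leaf sets $S_1,\ldots,S_{m'-1}\subseteq A_i$, can be completed to an induced $\overline{m'\up_k}^{bip}$ in all of $G$ by taking the unique vertex $v\in B_i$ as an $m'$-th centre together with any $k$-set $S_0\subseteq A^*:=A\setminus N(v)$ of common neighbours of $b_1,\ldots,b_{m'-1}$. Disjointness and all required adjacencies are automatic because $A_i\subseteq N(v)$ while $S_0\subseteq A^*$, so $S_0\cap S_j=\emptyset$, and $v$ is adjacent to every $S_j\subseteq A_i$ and non-adjacent to all of $S_0$. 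Since $G$ is $\overline{m'\up_k}^{bip}$-free, it suffices to partition $B_{i+1}$ so that every $m'-1$ members of a part share at least $k$ common neighbours in $A^*$. The key quantitative input is that \emph{every} $b\in B_{i+1}$ has at least $nk$ neighbours in $A^*$: since $B_{i+1}^+$ is $nk$-co-covered by $A_i'\subseteq A_i^+\subseteq N(v)$, we have $|N(v)\setminus N(b)|\geq nk$, and then the minimum-degree property of $v$ (which is exactly where the hypothesis $i\in I$ enters) gives $|N(b)\setminus N(v)|\geq|N(v)\setminus N(b)|\geq nk$.

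To construct the partition, I apply Lemma~\ref{covering} to the bipartite subgraph $G[A^*,B_{i+1}]$, which inherits $n''\Lambda_k$-freeness from $G$, with covering parameter set to $k$. Since $|A^*|\geq nk\geq(n''-1)k$, this produces a set $W\subseteq A^*$ with $|W|=(n''-1)k\leq nk$ and a subset $B^c\subseteq B_{i+1}$ of vertices $k$-covered by $W$, together with the additional property that every $a\in A^*\setminus W$ has fewer than $k^2$ neighbours in $B_{i+1}\setminus B^c$. I split $B_{i+1}$ into Type~I, consisting of $b$ with $|N(b)\cap W|\geq k$ (which includes $B^c$), and Type~II, consisting of the remaining $b$ with $|N(b)\cap W|<k$. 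For Type~I, I fix any total order on $W$ and colour each $b$ by the $k$ lowest-indexed vertices of $N(b)\cap W$; this uses at most $\binom{|W|}{k}\leq\binom{nk}{k}$ classes, and any $m'-1$ vertices sharing a colour possess that common $k$-set as neighbours in $A^*\supseteq W$, so the extension argument rules out the forbidden induced subgraph inside every such class.

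For Type~II, I exploit $n'\up_k$-freeness of $G$. Any Type~II vertex satisfies $|N(b)\cap(A^*\setminus W)|\geq nk-(k-1)\geq k$ and lies in $B_{i+1}\setminus B^c$. I then apply Lemma~\ref{boundeddegree} to the bipartite graph $H$ with top part $B_{i+1}\setminus B^c$ and bottom part $A^*\setminus W$: a $\Lambda_k$ in $H$ centred in its top corresponds to a $\up_k$ in $G$ (the centre lies in $B$), so $H$ is $n'\Lambda_k$-free, and the bottom side has degree bounded by $k^2$ by the output of Lemma~\ref{covering}. Lemma~\ref{boundeddegree} then caps the number of top vertices of degree at least $k$ in the bottom by $n'k(k^2)^2=n'k^5\leq nk^5$, which covers all of Type~II. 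Assigning each Type~II vertex to its own singleton part -- which trivially avoids $\overline{(m'-1)\up_k}^{bip}$ since $m'-1\geq 2$ -- contributes at most $nk^5$ additional parts, for the claimed total of $\binom{nk}{k}+nk^5=\pi_1$. The main delicate point is calibrating the covering parameter so that $|W|$ is small enough for the $\binom{nk}{k}$ colouring to be affordable while the low-coverage regime (where the colouring would fail) remains sparse enough to be partitioned into singletons via Lemma~\ref{boundeddegree}; the minimum-degree choice of $v$ is what makes both halves of this accounting succeed, which is why the lemma is stated only for $i\in I$.
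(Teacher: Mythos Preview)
Your proof is correct and follows essentially the same approach as the paper's: apply the covering lemma to the non-neighbourhood of the restart vertex $v$ to obtain a small set $W$ (the paper's $A'$), colour the $k$-covered vertices of $B_{i+1}$ by a $k$-subset of $W$ so that the extension-by-$v$ argument forbids $\overline{(m'-1)\up_k}^{bip}$ in each colour class, and then bound the uncovered remainder via Lemma~\ref{boundeddegree} and place those vertices in singletons. The only cosmetic difference is that you work with $A^*=A\setminus N(v)$ rather than the paper's $A\setminus(A_1\cup\cdots\cup A_{i-1}\cup A_i^+)$, a harmless superset since the extension argument only needs $S_0\cap N(v)=\emptyset$.
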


%\begin{proof}[Proof of Lemma~\ref{initialcase}] 

\begin{proof}
As $i \in I$, we have $B_i=\{b\}$ for some vertex $b$ with the minimal degree in $B \backslash (B_1 \cup B_2 \cup \ldots \cup B_{i-1})$ and $A_i^+=N(b)$. Let us now apply Lemma~\ref{covering} to the graph $G[A \backslash (A_1 \cup A_2 \cup \ldots \cup A_{i-1} \cup A_i^+, B_{i+1}^+]$. We can find a subset $A' \subseteq A \backslash (A_1 \cup A_2 \cup \ldots \cup A_{i-1} \cup A_i^+)$ of size $|A'| = (n-1)k$ and a partition of $B_{i+1}^+ = B_{i+1}^{c} \cup B_{i+1}^{s}$ such that: 
\begin{itemize}
\item $B_{i+1}^{c}$ is a set $k$-covered by $A'$. 
\item $A \backslash (A_1 \cup A_2 \cup \ldots \cup A_{i-1} \cup A_i^+ \cup A')$ has degree at most $k^2$ in $B_{i+1}^{s}$.
\end{itemize}

As each vertex in $B_{i+1}^{c}$ is $k$-covered by some vertex in $A'$, we can partition $B_{i+1}^{c}$ into $p' \leq \binom{|A'|}{k}$ subsets 
$B_{(i+1)1}, B_{(i+1)2}, \ldots, B_{(i+1)p'}$ such that each subset $B_{(i+1)j}$ is joined to a certain 
subset $S_j \subseteq A'$, of size $|S_j|=k$.
Observe that $B_{(i+1)j}$ is joined to $S_j$ and $A_i$ is joined to $\{b\}$ and the vertex set $\{b\} \cup S_j$ induces a co-star.  
Thus, we conclude that $G[B_{(i+1)j}, A_i]$ is in $Free(\overline{(m'-1)\up_k}^{bip})$ for
all $1 \leq j \leq p'$.

We will now bound the size of $B_{i+1}^{s}$. First of all, as the degree of each vertex in $A \backslash (A_1 \cup A_2 \cup \ldots \cup A_{i-1} \cup A_i^+ \cup A')$ in $B_{i+1}^s$ is bounded by $k^2$, we can apply Lemma~\ref{boundeddegree} to the graph $G[A \backslash (A_1 \cup A_2 \cup \ldots \cup A_{i-1} \cup A_i^+ \cup A'), B_{i+1}^{s}]$. From the Lemma we obtain that there are at most $(nk)(k^2)^2=nk^5$ vertices in $B_{i+1}^{s}$ of degree at least $k$ in $A \backslash (A_1 \cup A_2 \cup \ldots \cup A_{i-1} \cup A_i^+ \cup A')$. We will show that this is indeed a bound on $B_{i+1}$ by showing that all the vertices in $B_{i+1}^s$ have degree at least $k$ in $A \backslash (A_1 \cup A_2 \cup \ldots \cup A_{i-1} \cup A_i^+ \cup A')$. As $B_{i+1}$ is $r$-co-covered by a subset of $A_i^+$, each vertex  has at least $r$ non-neighbours in $A_i^+$. Hence by minimality of the degree of $b$, we have that every vertex of $B_{i+1}$ has at least $r$ neighbours in $A \backslash (A_1 \cup A_2 \cup \ldots \cup A_{i-1} \cup A_i^+)$. Thus, every vertex of $B_{i+1}$ has at least $r - |A'| = r-(n-1)k \geq k$ neighbours in $A \backslash (A_1 \cup A_2 \cup \ldots \cup A_{i-1} \cup A_i^+ \cup A')$ proving our claim. Thus, we obtain $|B_{i+1}^{s}| \leq nk^5$.

We to complete our partition of $B_{i+1}$ by partitioning $B^{s}$ into single vertex sets. i.e. let $B_{(i+1)(p'+1)}, B_{(i+1)(p'+2)}, \ldots, B_{(i+1)p}$ be the sets
containing a single vertex of $B^{s}$. Then clearly, $G[A_1, B_{(i+1)j}]$ is
$Free(\overline{(m'-1)\up_k})$ for all $p'+1 \leq i \leq p$.  Hence we have a partition of $B_{i+1}$ into $p \leq \binom{|A'|}{k}+|B^{s}| \leq \binom{nk}{k}+nk^5$ parts, such that between each part and the set
$A_i$ the graph is $Free(\overline{(m'-1)\up_k})$. 

\end{proof}

\begin{figure}[H]
	\centering
	\begin{tikzpicture}[scale=.6,auto=left]

%%%%%%%%%%%%%%%%%%%%%%%%%%%%%%%%%%%%%%%%%%%%
		%\draw[step=1cm,gray,very thin] (0,0) grid (13,5);
		%\foreach \x in {0,1,2,3,4,5,6,7,8,9,10,11,12,13}
    		%\draw (\x cm,1pt) -- (\x cm,-1pt) node[anchor=north] {$\x$};
		%\foreach \y in {0,1,2,3,4}
    		%\draw (1pt,\y cm) -- (-1pt,\y cm) node[anchor=east] {$\y$};
		\foreach \start in {}
		{		
		\node[w_vertex] (1) at (\start+0, 2) {}; 	
		\node[w_vertex] (2) at (\start+1, 2) { };
		\node[w_vertex] (3) at (\start+2, 2) { };
		\node[w_vertex] (4) at (\start+3, 2) { };
		\node[w_vertex] (5) at (\start+0,0) { };
		\node[w_vertex] (6) at (\start+1,0) { };
		\node[w_vertex] (7) at (\start+2,0) { };
		\node[w_vertex] (8) at (\start+3,0) { };				
		\foreach \from/\to in {1/5,2/6,3/7,4/8}
	    	\draw (\from) -- (\to);
		%\foreach \from/\to in {5/6,6/7,7/8}
	    	%\draw (\from) -- (\to);		
		}
%%%%%%%%%%%%%%%%%%%%%%%%%%%%%%%%%%%%%%%%%%%%%%%

	\foreach{\h} in {5} 
	{
	%%%%%%%              Ellipses            %%%%%%

		\foreach \start in {11}
		{
 		\draw (\start+2,\h) ellipse (7cm and 1cm);
		}

		\foreach \start in {0}
		{
 		\draw (\start+6,0) ellipse (3cm and 1cm);
		}

%		\draw [dashed] (3, 0) -- (-1, \h);
%		\draw [dashed] (9, 0) -- (5, \h);

		\foreach \x/\y in {8/4.7}
		{
		\draw (\x, \y) -- (\x+1.5, \y) -- (\x+1.5, \y+0.5) -- (\x, \y+0.5) -- cycle ;
		}

		%%%%%%           vertex b_1            %%%%%
		\node[w_vertex](1) at (-3, 0) {};
		%\draw (1) node[anchor=north] {$B_i$};
		\draw (-3, -0.1) node[anchor=north] {$B_i$};
		\node[point] (2) at (-1, \h) {};
		\node[point] (3) at (4.6, \h-0.5) {};
		\draw (1) -- (2);
		\draw (1) -- (3);

		\fill[blue!15!white] (6.5,0) -- (8.5,0) -- (8.7, 5) -- (8.1, 5) -- cycle;
		\fill[blue!15!white] (-3,0) -- (-1,\h) -- (4.6,\h-0.5) -- cycle;
	
		\foreach \start in {0}
		{
 		\draw[fill=white] (\start+2,\h) ellipse (3cm and 1cm);
		}

		\draw[fill=white] (8.4,5) ellipse (0.3cm and 0.1cm);
		\draw[fill=white] (7.5, 0) ellipse (1cm and 0.5cm);
		\draw (6.5, 0) -- (8.1, 5);	
		\draw (8.5, 0) -- (8.7,5);
		\draw[dashed] (1) -- (8.1, 5);
		\draw[dashed] (1) -- (8.7, 5);
		
 		\draw  (-0.8, \h-0.35) -- (6.5, 0);
		\draw  (5, \h) -- (8.5, 0);

		\draw (2, \h+0.9) node[anchor=south] {$A_{i}^+$};
		\draw (6+7, \h+0.9) node[anchor=south] {$A \backslash (A_1 \cup A_2 \cup \ldots \cup A_{i-1} \cup A_i^+)$};

		\draw (4+2, -1) node[anchor=north] {$B_{i+1}^+$};

		\draw (3.8, 0.3) node[anchor=north] {$B_{i+1}^s$};
		\draw (4+1.2, 0.3) node[anchor=north] {$B_{i+1}^c$};
	%	\draw (4+4, 0.3) node[anchor=north] {$Y_{23}$};
		
		\draw (7.5, 0.48) node[anchor=north] {$B_{(i+1)j}$};
		\draw (8.5, 6.0) node[anchor=north] {$S_j$};
		\draw (9.9, 5.5) node[anchor=north] {$A'$};

	%%%%%%      vertices a_1 --- a_3     %%%%%%

	%	\node[w_vertex] (4) at (7+2, \h ) {};		
	%	\node[w_vertex] (5) at (7+2.7, \h) {};
	%	\node[w_vertex] (6) at (7+4, \h) {};
	%	\draw (4) node[anchor=south] {$a_{11}$};
	%	\draw (5) node[anchor=south] {$a_{12}$};
	%	\draw (6) node[anchor=south] {$a_{13}$};

	%	\draw  (4) -- (5,1);
	%	\draw  (4) -- (3,0);
	%	\draw  (5) -- (5,1);
	%	\draw  (5) -- (7,1);
	%	\draw  (6) -- (7,1);
	%	\draw  (6) -- (8.7,0.5);

	%	\draw (5,-1) .. controls (5.5,0)  .. (5,1);
	%	\draw (7,-1) .. controls (8.5,-0.3) and (6.5, 0.3) .. (7,1);
	
		\draw (4.5, -0.85) -- (4.5,0.85);

	}
	\end{tikzpicture}
	\caption{Proof of Lemma~\ref{initialcase}}
	\label{condition3a}
\end{figure}

For the partition in general case $i \notin I$, we will use the following observation. 

\begin{observation}\label{observationvertex}
There is a subset $A_{i-1}^{cc} \subseteq A_{i-1}$ of size $n-1$ that co-covers $B_i^+$. Also each vertex of $A_{i-1}^{cc}$ has at most $kr$ non-neighbours in $B_{i+1} \backslash B_i^+$.  Furthermore, for each subset $F \subseteq B_i^+$ of size $|F|=r$ there is a vertex $v \in A_{i-1}^{cc}$ having $k$ non-neighbours in $F$.
\end{observation}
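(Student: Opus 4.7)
The plan is to extract $A_{i-1}^{cc}$ directly from the construction that produced $B_i^+$, by invoking the ``furthermore'' clause of Lemma~\ref{covering} applied in the bipartite complement.

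Since $i \not\in I$, the construction of $B_i^+$ proceeded by applying Lemma~\ref{covering} to the bipartite complement of $G[A_{i-1}^+, B \backslash \cup_{h=1}^{i-1} B_h]$, producing $A_{i-1}' \subseteq A_{i-1}^+$ of size $(n-1)r$ that $r$-co-covers $B_i^+$. The ``furthermore'' clause of that lemma, translated through the complement, supplies a subset $A_{i-1}^{cc} \subseteq A_{i-1}'$ of size $n-1$ that co-covers $B_i^+$ and such that every vertex of $A_{i-1}^{cc}$ has at most $kr$ non-neighbours in $(B \backslash \cup_{h=1}^{i-1} B_h) \backslash B_i^+$.

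Next I would argue that $A_{i-1}^{cc}$ avoids $A_{i-1}''$: by its very definition, $A_{i-1}''$ is the set of vertices in $A_{i-1}'$ with more than $kr$ non-neighbours in exactly this same ``rest-of-$B$'' set $(B \backslash \cup_{h=1}^{i-1} B_h) \backslash B_i^+$. Hence $A_{i-1}^{cc} \cap A_{i-1}'' = \emptyset$, so $A_{i-1}^{cc} \subseteq A_{i-1}^+ \backslash A_{i-1}'' = A_{i-1}$. Since $B_{i+1} \backslash B_i^+ \subseteq (B \backslash \cup_{h=1}^{i-1} B_h) \backslash B_i^+$, the co-degree bound transfers, establishing the first two assertions.

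For the last assertion, I would use a simple pigeonhole argument. Given any $F \subseteq B_i^+$ with $|F| = r = kn$, the co-covering property of $A_{i-1}^{cc}$ guarantees that every vertex of $F$ contributes at least one non-edge into $A_{i-1}^{cc}$, producing at least $r = kn$ non-edges between $F$ and $A_{i-1}^{cc}$. As $|A_{i-1}^{cc}| = n-1$, pigeonhole forces some $v \in A_{i-1}^{cc}$ to have at least $\lceil kn/(n-1) \rceil \geq k+1$ non-neighbours in $F$, which is more than the required $k$. The only delicate point is the bookkeeping that links the ``furthermore'' clause of Lemma~\ref{covering} with the pruning step of Section~\ref{partitionprocedure}; since both are controlled by the very same set, the containment $A_{i-1}^{cc} \subseteq A_{i-1}$ is immediate, and no genuine combinatorial obstacle arises.
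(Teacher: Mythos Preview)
Your proof is correct and follows essentially the same approach as the paper: both invoke the ``furthermore'' clause of Lemma~\ref{covering} (in the bipartite complement) to produce $A_{i-1}^{cc}$, use the resulting co-degree bound to verify $A_{i-1}^{cc} \subseteq A_{i-1}$ via the definition of $A_{i-1}''$, and finish with the pigeonhole bound $r/(n-1) \geq k$. Your write-up is in fact slightly more careful about the bookkeeping that places $A_{i-1}^{cc}$ inside $A_{i-1}$ rather than merely inside $A_{i-1}'$.
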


\begin{proof}
The first two conditions statements follow easily from Lemma~\ref{covering}. Indeed, by Lemma~\ref{covering} there is a subset $A_{i-1}^{cc} \subseteq A_{i-1}' \subseteq A_{i-1}^+$ that covers $B_i^+$. Further, Lemma~\ref{covering} proves that $A_{i-1}^{cc}$ has co-degree at most $kr$ in $B \backslash (B_1 \cup B_2 \cup \ldots \cup B_{i-1} \cup B_i^+)$. From this property it follows that, in particular, $A_{i-1}^{cc}$ has co-degree at most $kr$ in $B_{i+1} \backslash B_i^+$ and that $A_{i-1}^{cc} \subseteq A_{i-1}$ (by construction only vertices with co-degree more than $kr$ in $B \backslash (B_1 \cup B_2 \cup \ldots \cup B_{i-1} \cup B_i^+)$ are removed from $A_{i-1}^+$ to form $A_{i-1}$).    

Now, this subset $A_{i-1}^{cc}$ is of size $n-1$ and co-covers $B_i^+$ and hence co-covers $F \subseteq B_i^+$. Therefore, by pigeonhole principle, there will be a vertex in $v \in A_{i-1}'$ which has at least 
$\frac{|F|}{|A_{i-1}'|} \geq \frac{r}{n-1} \geq k$ non-neighbours in $F$.  
\end{proof}

Now we are ready to prove the partition lemma for the general case $i \notin I$.

\begin{lemma}\label{generalcase}
 For $i \notin I$, we can split $A_i$ into $p \leq \pi_2 = \binom{(n-1)r}{r}$ parts $A_i = A_{i1} \cup A_{i2} \cup \ldots \cup A_{ip}$ and find a subset $B_{i+1}^{ss} \subseteq B_{i+1}$ of size at most $\pi_3 = (n-1)r(k+1)$ such that $G[A_{il}, B_{i+1}\backslash B_{i+1}^{ss}]$ is $Free(\overline{(m''-1)\Lambda_k}^{bip})$ for all $1 \leq l \leq p$.  
\end{lemma}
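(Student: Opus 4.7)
The plan is to partition $A_i$ according to which $r$-subset of $B_i'$ sits inside the neighbourhood of each vertex, and then to use Observation~\ref{observationvertex} to extend any occurrence of $\overline{(m''-1)\Lambda_k}^{bip}$ inside a part into a forbidden $\overline{m''\Lambda_k}^{bip}$ in $G$. The hypothesis $i \notin I$ is what makes this possible: it guarantees $|B_i^+| \geq (n-1)r$, so the subset $B_i' \subseteq B_i^+$ produced by the partition procedure of Section~\ref{partitionprocedure} has size $(n-1)r$ and $r$-covers $A_i^+ \supseteq A_i$. This is the combinatorial resource that drives the whole argument.

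First, for each $a \in A_i$ I would pick an arbitrary $r$-subset $F(a) \subseteq B_i' \cap N(a)$ and partition $A_i = \bigcup_F A_{i,F}$ by the common value of $F(a)$, yielding at most $\binom{(n-1)r}{r} = \pi_2$ non-empty parts. Next, for every $F$ that occurs, I would apply Observation~\ref{observationvertex} to the $r$-set $F \subseteq B_i^+$ to obtain a vertex $v_F \in A_{i-1}^{cc}$ having at least $k$ non-neighbours in $F$ and at most $kr$ non-neighbours in $B_{i+1} \setminus B_i^+$. Since $|A_{i-1}^{cc}| = n-1$, only at most $n-1$ distinct vertices arise as $F$ ranges over its possibilities.

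The third step is to define
\[
B_{i+1}^{ss} \;=\; \bigl(B_{i+1}\cap B_i^+\bigr) \;\cup\; \bigcup_F \bigl((B_{i+1}\setminus B_i^+)\setminus N(v_F)\bigr).
\]
Since $B_i^+\setminus B_i$ is contained in $B_i'$ (of size $(n-1)r$) and $B_{i+1}\cap B_i = \emptyset$, the first piece has size at most $(n-1)r$; the second piece contributes at most $(n-1)\cdot kr$ vertices because only $n-1$ distinct $v_F$ occur, each with at most $kr$ non-neighbours in $B_{i+1} \setminus B_i^+$. Together this gives $|B_{i+1}^{ss}| \leq (n-1)r(k+1) = \pi_3$.

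The final step, which I expect to be the main obstacle, is verifying the freeness. Suppose for contradiction that $G[A_{i,F}, B_{i+1}\setminus B_{i+1}^{ss}]$ contains $\overline{(m''-1)\Lambda_k}^{bip}$ for some $F$, realised by top vertices $a_1,\ldots,a_{m''-1}\in A_{i,F}$ with private $k$-sets of non-neighbours $L_1,\ldots,L_{m''-1} \subseteq B_{i+1}\setminus B_{i+1}^{ss}$. The first piece of $B_{i+1}^{ss}$ forces $\bigcup_j L_j \subseteq B_{i+1}\setminus B_i^+$, and the second piece forces $v_F$ to be adjacent to every vertex of $\bigcup_j L_j$. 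Picking $k$ non-neighbours $f_1,\ldots,f_k\in F$ of $v_F$ (which are adjacent to every $a_j$ since $A_{i,F}$ is joined to $F$), the vertices $\{a_1,\ldots,a_{m''-1},v_F\}$ together with $L_1\cup\cdots\cup L_{m''-1}\cup\{f_1,\ldots,f_k\}$ induce $\overline{m''\Lambda_k}^{bip}$ in $G$, contradicting the hypothesis. The delicate bookkeeping is in checking that both pieces of $B_{i+1}^{ss}$ are actually used --- the first so that Observation~\ref{observationvertex}'s control on non-neighbours of $v_F$ applies, and the second to ensure $v_F$'s correct adjacency pattern on the $L_j$'s.
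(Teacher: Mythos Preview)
Your proposal is correct and follows essentially the same approach as the paper's proof: partition $A_i$ by the $r$-subsets of $B_i'$ contained in each vertex's neighbourhood, define $B_{i+1}^{ss}$ as the union of $B_{i+1}\cap B_i^+$ with the non-neighbourhoods in $B_{i+1}\setminus B_i^+$ of the relevant vertices of $A_{i-1}^{cc}$, and then extend any copy of $\overline{(m''-1)\Lambda_k}^{bip}$ by the co-star centred at $v_F$ with leaves in $F$. The only cosmetic difference is that the paper removes the non-neighbourhoods of \emph{all} of $A_{i-1}^{cc}$ at once (calling this set $B_{i+1}^{sc}$), whereas you remove only the non-neighbourhoods of the $v_F$ that actually arise; since $|A_{i-1}^{cc}|=n-1$ both give the same bound $(n-1)kr$, and your justification of $|B_{i+1}\cap B_i^+|\le (n-1)r$ via $B_i^+\setminus B_i\subseteq B_i'$ is in fact more explicit than the paper's.
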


\begin{proof}%[Proof of Lemma~\ref{generalcase}]
For every $F \in B_i'$ of size $|F|=r$, consider the set $A_{i, F}=\{v \in A_{i} | \{v\} $ is joined to $ F\}$. As $A_{i}$ is $r$-covered by $B_i'$, these subsets $A_{i,F}$ cover $A_{i}$. From this cover we can obtain a partition $A_{i} =A_{i1} \cup A_{i2} \cup \ldots \cup A_{ip}$ with $p \leq \binom{|B'|}{r}=\binom{(n-1)r}{r}$ by removing each vertex appearing several subsets of the cover from all but one subset. This partition will satisfy the condition that for each $1\leq l \leq p$, we have $A_{il} \subseteq A_{i,F}$ for some $F$, which means that $A_{il}$ is joined to some $F \subseteq B_i'$ of size $|F|=r$.  

Let $A_{i-1}^{cc}$ be the subset of $A_{i-1}$ provided by Observation~\ref{observationvertex}. Denote the non-neighbourhood of $A_{i-1}^{cc}$ in $B_{i+1} \backslash B_i^+$ by $B_{i+1}^{sc}$. By Observation~\ref{observationvertex}, each vertex in $A_{i-1}^{cc}$ has at most $kr$ non-neighbours in $B_{i+1}$,  hence we have $|B_{i+1}^{sc}| \leq |A_{i-1}^{cc}| \cdot kr = (n-1)kr$. Further, for each subset $F \subseteq B_i'$ of size $|F|=r$, let $v_F \in A_{i-1}^{cc}$ be a vertex which has $k$ non-neighbours in $F$ which exists by Observation~\ref{observationvertex}.

Let $B_{i+1}^{ss}$ be the union of subsets $B_{i+1}^{sc}$ and $B_i^+ \cap B_{i+1}$. As $|B_{i+1}^{sc}| \leq (n-1)kr$ and $|B_i^+ \cap B_{i+1}| \leq (n-1)r$, we conclude that $|B_{i+1}^{ss}| \leq (n-1)r(k+1)$. Note that $B_{i+1} \backslash B_{i+1}^{ss} = B_{i+1} \backslash (B_{i+1}^{sc} \cup B_i^+)$. It is now easy to verify that for any $l$, $G[A_{il}, B_{i+1} \backslash B_{i+1}^{ss}]$ does not contain $\overline{(m''-1)\Lambda_k}^{bip}$. Indeed, by definition of $A_{il}$, there is a subset $F \subseteq B_i'$ of size $|F|=r$ which is joined to $A_{il}$. The vertex $v_F$ is joined to $B_{i+1} \backslash B_{i+1}^{ss}$. Hence a co-star $\overline{\Lambda_k}^{bip}$ consisting of $v_F$ and its non-neighbours in $F$ has the centre joined to $B_{i+1} \backslash B_{i+1}^{ss}$ and the leaves joined to $A_{il} \subseteq A_{i,F}$. As $G$ is $\overline{m''\Lambda_k}^{bip}$-free, we deduce that $G[A_{il}, B_{i+1} \backslash B_{i+1}^{ss}]$ is $\overline{(m''-1)\Lambda_k}^{bip}$-free as required.
\end{proof}

\begin{figure}[H]
	\centering
	\begin{tikzpicture}[scale=.6,auto=left]

%%%%%%%%%%%%%%%%%%%%%%%%%%%%%%%%%%%%%%%%%%%%
		%\draw[step=1cm,gray,very thin] (0,0) grid (13,5);
		%\foreach \x in {0,1,2,3,4,5,6,7,8,9,10,11,12,13}
    		%\draw (\x cm,1pt) -- (\x cm,-1pt) node[anchor=north] {$\x$};
		%\foreach \y in {0,1,2,3,4}
    		%\draw (1pt,\y cm) -- (-1pt,\y cm) node[anchor=east] {$\y$};
		\foreach \start in {}
		{		
		\node[w_vertex] (1) at (\start+0, 2) {}; 	
		\node[w_vertex] (2) at (\start+1, 2) { };
		\node[w_vertex] (3) at (\start+2, 2) { };
		\node[w_vertex] (4) at (\start+3, 2) { };
		\node[w_vertex] (5) at (\start+0,0) { };
		\node[w_vertex] (6) at (\start+1,0) { };
		\node[w_vertex] (7) at (\start+2,0) { };
		\node[w_vertex] (8) at (\start+3,0) { };				
		\foreach \from/\to in {1/5,2/6,3/7,4/8}
	    	\draw (\from) -- (\to);
		%\foreach \from/\to in {5/6,6/7,7/8}
	    	%\draw (\from) -- (\to);		
		}
%%%%%%%%%%%%%%%%%%%%%%%%%%%%%%%%%%%%%%%%%%%%%%%

	\foreach{\h} in {5} 
	{
	%%%%%%%              Ellipses            %%%%%%

		\foreach \start in {17,24}
		{
 		\draw[fill=white] (\start+2,\h) ellipse (3cm and 1cm);
		}

		\foreach \start in {17}
		{
 		\draw[fill=white] (\start+6,0) ellipse (3cm and 1cm);
		}

	\fill[blue!15!white] (20.7, 5) -- (31.8, 0.8) -- (27, 0) -- cycle;
           \node[w_vertex](1) at (20.7, 5) {};
	\draw (1) -- (27,0);
 	\draw (1) -- (31.8, 0.8);

	\fill[blue!15!white] (24,5) -- (24.2, 0.2) -- (24.8, 0.2) -- (25.8, 5) -- cycle;
		\draw (24,5) -- (24.2, 0.2);
		\draw(25.8, 5) -- (24.8, 0.2);

            \foreach \start in {24}
		{
 		\draw[fill=white] (\start+6,0) ellipse (3cm and 1cm);
		}

	\draw (23, 0.8) node[anchor=north] {$B_{i}'$};
	
	\draw (19.4, 4.5) node[anchor=south] {$A_{i-1}^{cc}$};
	\draw (20.5, 5.0) ellipse (0.4cm and 0.2cm);
	\draw[fill=white] (24.9, 5.0) ellipse (0.9cm and 0.5cm);
	\draw (24.9, 4.5) node[anchor=south] {$A_{i,F}$};

	\draw(24, 5) -- (27, 0);	
	\draw(25.8, 5) -- (31.8, 0.8);

%	\node[w_vertex](1) at (20.7, 5) {};
	\draw (20.7, 5) node[anchor=south] {$v_F$};
	\draw (32.4, -0.5) node[anchor=south] {$B_{i+1}^{sc}$};
%	\draw (1) -- (27,0);
%	\draw (1) -- (31.8, 0.8);
	\draw (31.8, 0.8) -- (31.8, -0.8);
	\draw[dashed] (1) -- (24.45, 0.1);
	\draw[dashed] (1) -- (24.6, 0.1);
	\draw[dashed] (1) -- (24.75, 0.1);
 
		\foreach \x/\y in {23.5/0}
		{
		\draw (\x, \y) -- (\x+1.5, \y) -- (\x+1.5, \y+0.5) -- (\x, \y+0.5) -- cycle ;
		}
	\draw[fill=white] (24.5, 0.2) ellipse (0.3cm and 0.1cm);
	\draw (24.5, 0) node[anchor=north] {$F$};

	%	\draw [dashed] (3, 0) -- (-1, \h);
	%	\draw [dashed] (9, 0) -- (5, \h);

	%	\draw (13, 0) -- (16, \h);
	%	\draw (19, 0) -- (22, \h);
	%%	\draw [dashed] (33, 0) -- (29, \h);
	%%	\draw [dashed] (27, 0) -- (23, \h);
		
	%	\draw [dotted] (10, 0) -- (12, 0);
	%	\draw [dotted] (13, \h) -- (15, \h);
	%	\draw [dotted] (11.5, \h*0.5) -- (13.5, \h*0.5);
	%	\draw [dotted] (34, 0) -- (36, 0);	
	%	\draw [dotted] (35, \h*0.5) -- (36, \h*0.5);
		
	%	\draw (2, \h+1.5) node[anchor=south] {$A_{1}$};
	%	\draw (2+7, \h+1.5) node[anchor=south] {$A_{2}$};
		\draw (3+2+14, \h+1.1) node[anchor=south] {$A_{i-1}$};
		\draw (3+2+21, \h+1.1) node[anchor=south] {$A_{i}$};
	%	\draw (3+2+28, \h+1.5) node[anchor=south] {$A_{i+1}$};

	%	\draw (7, \h+0.7) node[anchor=south] {$A_{21}$};
	%	\draw (7+2, \h+0.7) node[anchor=south] {$A_{22}$};
	%	\draw (7+4, \h+0.7) node[anchor=south] {$A_{23}$};

	%	\draw (24, \h-0.3) node[anchor=south] {$A_{i1}$};
	%	\draw (24+2, \h-0.3) node[anchor=south] {$A_{i2}$};
	%	\draw (24+4, \h-0.3) node[anchor=south] {$A_{i3}$};

	%	\draw (4+2, -1.6) node[anchor=north] {$B_{2}$};
	%	\draw (7+2+7, -1.6) node[anchor=north] {$B_{i-1}$};
		\draw (2+21, -1.2) node[anchor=north] {$B_{i}^+$};
		\draw (2+28, -1.2) node[anchor=north] {$B_{i+1} \backslash B_i^+$};

	}
	\end{tikzpicture}
	
	\caption{Proof of Lemma~\ref{generalcase}}
	\label{condition3b}
\end{figure}

The general case argument also yields a subpartition that satisfies (***) and (****) for the remaining pairs of consecutive bags $G[A_i, B_i]$. 

\begin{lemma}\label{generalcase2}
For any $i$, we can split $B_i$ into $p \leq \pi_2 = \binom{(n-1)r}{r}$ parts $B_i=B_{i1} \cup B_{i2} \cup \ldots \cup B_{ip}$ and find a subset $A_{i}^{ss} \subseteq A_{i}$ of size at most $\pi_3 = (n-1)r(k-1)$ such that $G[A_i \backslash A_i^{ss}, B_{il}]$ is $Free((n'-1) \up_k)$ for all $1 \leq l \leq p$.
\end{lemma}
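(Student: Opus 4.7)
The proof mirrors that of Lemma~\ref{generalcase}, dualising the two sides of the bipartition: I will partition $B_i$ (rather than $A_i$) using the $r$-co-covering of $B_i^+$ by $A_{i-1}'$ (rather than the $r$-covering of $A_i^+$ by $B_i'$), and the helper vertex producing the contradiction will live on the $B$-side (rather than the $A$-side). First handle the degenerate case $i \in I$: then $B_i = \{b\}$ has a single vertex, so no induced $(n'-1)\up_k$ with $n' - 1 \ge 2$ centres in $B_i$ can appear; take $p=1$ and $A_i^{ss}=\emptyset$.

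For $i \notin I$, recall that the construction of $B_i^+$ in Section~\ref{partitionprocedure} applied Lemma~\ref{covering} to the complement of $G[A_{i-1}^+, B \setminus \bigcup_{h<i} B_h]$, yielding $A_{i-1}' \subseteq A_{i-1}^+$ of size $(n-1)r$ such that $B_i^+ \supseteq B_i$ is $r$-co-covered by $A_{i-1}'$. For every $r$-subset $F \subseteq A_{i-1}'$ define $B_{i,F}=\{w \in B_i : w \text{ co-joined to } F\}$; these sets cover $B_i$, so arbitrarily assigning each vertex of $B_i$ to one of them gives a partition $B_i = B_{i1} \cup \ldots \cup B_{ip}$ with $p \le \binom{(n-1)r}{r} = \pi_2$, and I fix $F_l \subseteq A_{i-1}'$ so that $B_{il} \subseteq B_{i,F_l}$.

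Next I apply the \emph{furthermore} clause of Lemma~\ref{covering}, this time to the earlier step in which $A_{i-1}^+$ was constructed from $B_{i-1}^+$ (if $i-1 \in I$, simply take the single vertex of $B_{i-1}$). This produces a subset $B_{i-1}^{\mathrm{cov}} \subseteq B_{i-1}'$ of size at most $n-1$ which covers $A_{i-1}^+$ (and hence $A_{i-1}'$), and in which every vertex has at most $(r-1)(k-1)$ neighbours outside $A_{i-1}^+$ (using the tighter bound extracted from the proof of Lemma~\ref{covering}). An argument identical to that in Observation~\ref{observationvertex} shows that the removal criterion defining $B_{i-1}$ from $B_{i-1}^+$ does not discard any vertex of $B_{i-1}^{\mathrm{cov}}$, so $B_{i-1}^{\mathrm{cov}} \subseteq B_{i-1}$. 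By pigeonhole, for each $r$-subset $F \subseteq A_{i-1}'$ there exists $w_F \in B_{i-1}^{\mathrm{cov}}$ with at least $r/(n-1) \ge k$ neighbours in $F$. Now set
\[
A_i^{ss} = \bigcup_{w \in B_{i-1}^{\mathrm{cov}}} \bigl(N(w) \cap A_i\bigr),
\]
which has size at most $(n-1)(r-1)(k-1) \le \pi_3$, and by definition $B_{i-1}^{\mathrm{cov}}$ is co-joined to $A_i \setminus A_i^{ss}$.

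To finish, suppose for contradiction that $G[A_i \setminus A_i^{ss}, B_{il}]$ contains an induced $(n'-1)\up_k$ with centres $w_1,\ldots,w_{n'-1} \in B_{il}$ and leaf sets in $A_i \setminus A_i^{ss}$. Adjoining the additional star whose centre is $w_{F_l}$ and whose $k$ leaves are chosen among the neighbours of $w_{F_l}$ in $F_l$ produces an induced $n'\up_k$ in $G$: the new leaves lie in $F_l \subseteq A_{i-1}'$ and are therefore non-adjacent to each of $w_1,\ldots,w_{n'-1}$ by the co-join definition of $B_{i,F_l}$, while $w_{F_l}$ is non-adjacent to every existing leaf in $A_i \setminus A_i^{ss}$ by the construction of $A_i^{ss}$. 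This contradicts the $n'\up_k$-freeness of $G$. The main obstacle will be the bookkeeping in the dual setup—verifying that $B_{i-1}^{\mathrm{cov}} \subseteq B_{i-1}$ (not just $B_{i-1}^+$), handling the possible overlap $A_i \cap A_{i-1}^+$, and extracting the tighter per-vertex bound $(r-1)(k-1)$ from the proof of Lemma~\ref{covering} so that the stated constant $\pi_3$ is actually achieved.
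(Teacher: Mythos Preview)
Your approach is essentially identical to the paper's: the paper's proof of this lemma is a one-liner instructing the reader to apply the argument of Lemma~\ref{generalcase} to the bipartite complement of the graph induced by $A_{i-1},B_{i-1},A_i,B_i$, and you have carried out exactly that dualisation explicitly. The partition of $B_i$ via the $r$-co-cover by $A_{i-1}'$, the selection of $B_{i-1}^{\mathrm{cov}}$ via the ``furthermore'' clause of Lemma~\ref{covering}, the pigeonhole choice of $w_F$, and the final contradiction all mirror the corresponding steps in Lemma~\ref{generalcase}.

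One bookkeeping point you flag but do not resolve: the per-vertex bound $(r-1)(k-1)$ you extract from Lemma~\ref{covering} only controls the neighbours of $w\in B_{i-1}^{\mathrm{cov}}$ in $A_i\setminus A_{i-1}^+$, not in all of $A_i$. Since $B_{i-1}^{\mathrm{cov}}$ covers $A_{i-1}^+$, every vertex of the (possibly nonempty) overlap $A_i\cap A_{i-1}^+\subseteq A_{i-1}''$ is automatically absorbed into your $A_i^{ss}$, adding up to $|A_{i-1}''|<(n-1)r$ to its size. This pushes the bound to roughly $(n-1)r(k+1)$, matching Lemma~\ref{generalcase}, rather than the stated $(n-1)r(k-1)$; the latter appears to be a typo in the paper (it should agree with the $\pi_3$ of Lemma~\ref{generalcase}), so this discrepancy is not a defect of your argument.
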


\begin{proof}
For $i \in I$, the result is trivial as $B_i$ is joined to $A_i$. For $i \notin I$ apply the argument of Lemma~\ref{generalcase} to the bipartite complement of the graph induced by the four consecutive bags $A_{i-1}, B_{i-1}, A_i, B_i$ to yield the required partition. 
\end{proof}

Thus, joining all the partitions of consecutive bags together, we obtain a subpartition that satisfies (***) and (****). Recall that in Lemma~\ref{initialcase}, Lemma~\ref{generalcase} and Lemma~\ref{generalcase2} we used the bounds of partition $\pi_1=\binom{nk}{k}+nk^5$, $\pi_2=\binom{(n-1)r}{r}$ and $\pi_3=(n-1)r(k-1)$. Let $\pi=max(\pi_1, \pi_2, \pi_3)$. We note that as $r=nk$, $n=\max(n', n'', m', m'')$, $\pi=\pi(n', n'', m',m'', k)$ is a function depending on the original input. 

\begin{lemma} \label{conditions3and4}
There is a refinement of the partition $A=A_1 \cup A_2 \cup \ldots \cup A_z$ and $B=B_1 \cup B_2 \cup \ldots B_z$ that splits each bag $A_i$ and $B_i$ into at most $\pi^2$ parts such that the refined partition satisfies (***) and (****). 
\end{lemma}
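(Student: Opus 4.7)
The proof will combine the three partitions produced by Lemmas~\ref{initialcase},~\ref{generalcase}, and~\ref{generalcase2} into a single common refinement on each bag, and absorb the small exceptional sets $A_i^{ss}$ and $B_{i+1}^{ss}$ by splitting them into singletons.

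Fix an index $i$. On the bag $A_i$, Lemma~\ref{generalcase2} produces an exceptional set $A_i^{ss}$ of size at most $\pi_3$. If $i \notin I$, Lemma~\ref{generalcase} also provides a partition of $A_i$ into at most $\pi_2$ parts $A_{i,1}, \ldots, A_{i,p}$; if $i \in I$ no such partition is required (take $p=1$, $A_{i,1}=A_i$). Declare the final parts of $A_i$ to be the (non-empty) sets $A_{i,\ell} \setminus A_i^{ss}$ for $1\leq\ell\leq p$, together with one singleton $\{v\}$ for each $v \in A_i^{ss}$: at most $\pi_2 + \pi_3 \leq \pi^2$ parts. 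On the bag $B_i$, Lemma~\ref{generalcase2} always provides a partition of $B_i$ into at most $\pi_2$ parts $B_{i,1},\ldots,B_{i,p'}$. If $i-1 \in I$, Lemma~\ref{initialcase} (at index $i-1$) provides another partition of $B_i$ into at most $\pi_1$ parts, and one takes the common refinement, yielding at most $\pi_1\pi_2 \leq \pi^2$ parts. If $i-1 \notin I$, Lemma~\ref{generalcase} (at index $i-1$) provides an exceptional set $B_i^{ss} \subseteq B_i$ of size at most $\pi_3$, and the final parts of $B_i$ are taken to be $B_{i,\ell} \setminus B_i^{ss}$ together with a singleton for each vertex of $B_i^{ss}$: at most $\pi_2 + \pi_3 \leq \pi^2$ parts.

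To verify (***) and (****), consider any pair $(A_{ig}, B_{jh})$ with $j=i$ or $j=i+1$. If both $A_{ig}$ and $B_{jh}$ are ``non-singleton'' parts, then $A_{ig} \subseteq A_i \setminus A_i^{ss}$ (when $j=i$) or $A_{ig} \subseteq A_{i,\ell}$ for some $\ell$ (when $j=i+1$ and $i \notin I$), and $B_{jh}$ is contained in a part from Lemma~\ref{generalcase2}, from Lemma~\ref{initialcase}, or in $B_j \setminus B_j^{ss}$, so the required freeness is inherited from the lemma that produced that partition. If either part is a singleton, then the forbidden graphs $(n'-1)\up_k$, $\overline{(m'-1)\up_k}^{bip}$, and $\overline{(m''-1)\Lambda_k}^{bip}$ are avoided trivially: thanks to $n',m',m''\geq 3$ and $k\geq 1$, each of these graphs has at least two vertices on each side of its bipartition.

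The main ``difficulty'' is purely bookkeeping: one must distinguish symmetrically the cases $i \in I$ and $i \notin I$ for the two adjacent bags of each consecutive pair, and verify that the singleton decomposition of the exceptional sets does not disturb any avoidance condition. Beyond this, no new structural insight is needed---the lemma is essentially a packaging result combining Lemmas~\ref{initialcase},~\ref{generalcase}, and~\ref{generalcase2}, with the bound $\pi^2$ obtained from the elementary inequalities $\pi_2+\pi_3 \leq \pi^2$ and $\pi_1\pi_2 \leq \pi^2$ (which hold comfortably since $\pi \geq \pi_2 = \binom{(n-1)r}{r} \geq 20$ under the standing assumptions $n \geq 3$, $r = nk \geq 3$).
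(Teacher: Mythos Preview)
Your proof is correct and follows essentially the same approach as the paper: both combine the partitions from Lemmas~\ref{initialcase},~\ref{generalcase}, and~\ref{generalcase2} via common refinement, handle the exceptional sets $A_i^{ss}$ and $B_{i+1}^{ss}$ by splitting them into singletons, and use the fact that $n',m',m''\geq 3$ makes the forbidden configurations too large to fit when one side is a singleton. Your case analysis is slightly more explicit than the paper's (which is rather terse and contains a minor mislabeling of which lemma produces $A_{i+1}^{ss}$), but the content is the same.
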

\begin{proof}
This follows easily taking the refinement of the partitions obtained in Lemma~\ref{initialcase}, Lemma~\ref{generalcase} and
Lemma~\ref{generalcase2}. These Lemmas, do not partition $B_i$ and $A_i$ for $i \in I$ and the first bag to be partitioned is $B_{i+1}$. This bag is partitioned into at most $\pi$ parts twice: to make $G[A_i, B_{i+1}]$ and $G[B_{i+1}, A_{i+1}]$ both satisfy (***) and (****). Taking the refinement (intersection) of these partitions we obtain a partition of $B_{i+1}$ into at most $\pi^2$ parts. The next bag $A_{i+1}$ and all the following bags as long as $i \notin I$ are partitioned into at most $\pi$ parts, but there also is a small set $A_{i+1}^{ss}$ of size less than $\pi$ that is sometimes removed from it while proving Lemma~\ref{generalcase}. To make sure all pairs of bags satisfy conditions (***) and (****), we place each vertex of $A_{i+1}^{ss}$ into separate bag, thus refining the partition of $A_{i+1}$ into $2\pi \leq \pi^2$ parts. Similarly, $B_{i+2}$ which is partitioned into $\pi$ parts together with singletons from $B_{i+1}^{ss}$ provides us with partition of into at most $2\pi \leq \pi^2$ parts. It is now clear that this refined partition satisfies (***) and (****) for all pairs of consecutive bags. 
\end{proof}

Together with conditions (*) and (**) obtained in the previous section, we conclude that we constructed the required $d$-template.

\begin{corollary}~\label{cortemplate}
Let $G=(A,B, \mathcal{E})$ be a bipartite $(n'\up_k, n''\Lambda_k, \overline{m'\up_k}^{bip}, \overline{m''\Lambda_k}^{bip})$-free graph and let $M$ and $N$ be the subsets of $A$ and $B$ respectively obtained in the previous section. Then $G[A \backslash M, B \backslash N]$ admits an $(n', m', m'', k, \pi^2, d)$-template, where $d=(n-1)(kn)+k(kn)$ with $n=max(n',n'', m',m'')$ and $\pi=\pi(n',n'',m',m'',k)$ are both constants depending on $n', n'', m', m''$ and $k$.
\end{corollary}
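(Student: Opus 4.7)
The plan is to combine the results of Lemma~\ref{unmarkedvertices} and Lemma~\ref{conditions3and4} and then observe that restriction to $A\setminus M$ and $B\setminus N$ preserves all the required template conditions.

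First I would apply the partition procedure of Section~\ref{partitionprocedure} with the choice $r=kn$, producing partitions $A=A_1\cup\cdots\cup A_z$ and $B=B_1\cup\cdots\cup B_z$ together with the corresponding $A_i^+$, $A_i'$, $B_i^+$, $B_i'$ and the index set $I$. By Lemma~\ref{unmarkedvertices} the induced partition on $A\setminus M$ and $B\setminus N$ is a $d$-template (for $d=(n-1)r+kr=(n-1)(kn)+k(kn)$); that is, conditions (*) and (**) of Definition~\ref{partition3} are verified by the procedure together with the definition of the marked vertex sets $M$ and $N$.

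Next I would apply Lemma~\ref{conditions3and4} to obtain a refinement of $A=A_1\cup\cdots\cup A_z$, $B=B_1\cup\cdots\cup B_z$ into at most $\pi^2$ subparts per bag that satisfies (***) and (****) of Definition~\ref{partition3} for the same index set $I$ and the parameters $(n',m',m'',k)$. Intersecting this refinement with $A\setminus M$ and $B\setminus N$, one gets for each $i$ a subpartition of $A_i\setminus M$ and of $B_i\setminus N$ into at most $\pi^2$ parts. To match Definition~\ref{partition3} exactly, which requires every bag to be split into precisely $q=\pi^2$ pieces, I would pad with empty sets wherever necessary.

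It remains only to observe that conditions (***) and (****) are \emph{hereditary} with respect to taking induced subgraphs on the bipartition: if $G[A_{ig},B_{jh}]$ lies in $Free((n'-1)\up_k)$, in $Free(\overline{(m'-1)\up_k}^{bip})$, or in $Free(\overline{(m''-1)\Lambda_k}^{bip})$, then so does $G[A_{ig}\setminus M,B_{jh}\setminus N]$. Together with the $d$-template property already established, this gives all four conditions of an $(n',m',m'',k,\pi^2,d)$-template on $G[A\setminus M,B\setminus N]$. I do not expect a real obstacle here, since both of the ingredient results have already been proved; the only minor bookkeeping point is that Lemma~\ref{conditions3and4} was phrased for the unrestricted partition, and one must check that the specific witnesses used in its proof (e.g.\ the vertex $b$ of minimum degree in Lemma~\ref{initialcase} and the vertex $v_F\in A_{i-1}^{cc}$ in Lemma~\ref{generalcase}) survive in $A\setminus M$ and $B\setminus N$; but these witnesses are precisely the ones chosen by the partition procedure itself, and by construction they avoid $M$ and $N$.
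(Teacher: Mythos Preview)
Your proposal is correct and follows essentially the same route as the paper: combine Lemma~\ref{unmarkedvertices} for conditions (*)--(**) with Lemma~\ref{conditions3and4} for conditions (***)--(****), and note that the latter survive restriction to $A\setminus M$ and $B\setminus N$ because membership in a $Free(\cdot)$ class is hereditary. One small remark: your final ``bookkeeping point'' about whether the witnesses $b$ and $v_F$ survive in $A\setminus M$, $B\setminus N$ is unnecessary---once Lemma~\ref{conditions3and4} has established that $G[A_{ig},B_{jh}]$ lies in the relevant $Free(\cdot)$ class, this is a property of that bipartite graph itself, and any induced subgraph (in particular $G[A_{ig}\setminus M,\,B_{jh}\setminus N]$) inherits it automatically, regardless of where the original witnesses live.
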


\begin{proof}
The conditions (***) and (****) proved for a partition of $G=(A, B, \mathcal{E})$ in Lemma~\ref{conditions3and4} clearly remain valid for the same partition restricted to sets $A \backslash M$ and $B \backslash N$. Lemma~\ref{unmarkedvertices} shows that the conditions (*) and (**) are satisfied for the graph induced by $A \backslash M$ and $B \backslash N$. Thus all the conditions (*)-(****) are satisfied, concluding the result. 
\end{proof}

%%%%%%%%%%%%%%%%%%%%%%%%%%%%%%%%%%%%%%%%%%%%%%%%%
%%%%%%% FINAL PROOF OF STARS SECTION %%%%%%%%%%%%
%%%%%%%%%%%%%%%%%%%%%%%%%%%%%%%%%%%%%%%%%%%%%%%%%

\subsection{Proof of Theorem~\ref{thm:stars}}

We can now put the results from the previous sections together to obtain the induction step needed for proving Theorem~\ref{thm:stars}. 

\begin{lemma}~\label{inductionstep}
Suppose a bipartite graph $G=(A,B, \mathcal{E})$ is $(n'\Lambda_k,
n''\up_k, \overline{m'\Lambda_k}^{bip}, \overline{m''\up_k}^{bip})$-free for some integers $n',n'',m',m'' \geq 3$. Then $A$ and $B$ can be partitioned into at 
most $4 \pi^2 + 3nkd^2$ parts each such that between 
the parts the graph is either in $Free((n'-1)\up_{k+2d})$, $Free(\overline{(m'-1)\up_{k+2d}}^{bip})$, or in
$Free (\overline{(m''-1)\Lambda_{k+2d}}^{bip})$.
\end{lemma}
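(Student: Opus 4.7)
The plan is to assemble the three ingredients already developed in this section: Corollary~\ref{cortemplate} produces a refined $d$-template on the ``unmarked'' vertices after removing $M$ and $N$; Lemma~\ref{templatecollapse} collapses any refined $d$-template down to a bounded number of parts with the desired Free conditions between them; and Lemma~\ref{markedvertices} controls the remaining ``marked'' vertex sets $M$ and $N$. None of the genuinely new combinatorial work lies in the induction step itself — the present lemma is an assembly step whose role is to turn the structural results of the subsection into a single clean partition statement suitable for inductively driving the proof of Theorem~\ref{thm:stars}.

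First I would apply Corollary~\ref{cortemplate} to $G=(A,B,\mathcal{E})$ to extract the marked sets $M\subseteq A$ and $N\subseteq B$ together with the refined $(n',m',m'',k,\pi^2,d)$-template of $G[A\setminus M, B\setminus N]$, where $d=(n-1)(kn)+k(kn)$ and $\pi=\pi(n',n'',m',m'',k)$ are the constants provided there. Next, I would feed this refined template with parameter $q=\pi^2$ into Lemma~\ref{templatecollapse}, obtaining partitions of $A\setminus M$ and $B\setminus N$ into at most $4\pi^2$ parts each such that the bipartite graph induced between any two parts already belongs to one of the three target Free classes. Independently, I would apply Lemma~\ref{markedvertices} to obtain partitions $M=M_1\cup\cdots\cup M_c$ and $N=N_1\cup\cdots\cup N_c$ with $c=3nkd^2$, where each $G[M_i,B]$ is $2\Lambda_{2d}$-free and each $G[A,N_j]$ is $2\up_{2d}$-free. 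Concatenating the two partitions on each side of the bipartition yields a partition of $A$ into at most $4\pi^2+3nkd^2$ parts, and similarly for $B$, matching the count in the statement.

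It then remains to verify the Free-class condition pair by pair. For pairs of parts both coming from the template-collapse step, this is exactly the output of Lemma~\ref{templatecollapse}. For a pair involving a marked bag $M_i$ (on the $A$-side) and any $B$-side bag $B'$, the graph $G[M_i,B']$ inherits the $2\Lambda_{2d}$-free property from $G[M_i,B]$, which places it inside the appropriate target Free class: since $n',m',m''\geq 3$ and $k\geq 1$, the small forbidden pattern $2\Lambda_{2d}$ embeds as an induced subgraph into any $(n'-1)$-, $(m'-1)$- or $(m''-1)$-star-forest (respectively bipartite co-star-forest) pattern on $k+2d$ leaves per star, so the monotonicity of induced-subgraph containment gives the required implication. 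A symmetric argument handles bags $N_j$ on the $B$-side.

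The main obstacle is not creative but bookkeeping: one has to track which of the three Free classes is assigned to each pair of bags (depending on parity of indices and on whether $i\in I$ in the template construction), and to make sure that the marked bags inherit a sufficient forbidden-subgraph condition from Lemma~\ref{markedvertices}. Since all three Free classes listed in the conclusion already contain $2\Lambda_{2d}$ or $2\up_{2d}$ as an induced subgraph of their forbidden pattern, the verification for the marked bags collapses to this monotonicity observation, and no further combinatorial work is required. The counting $4\pi^2+3nkd^2$ then falls out directly from adding the outputs of Lemma~\ref{templatecollapse} and Lemma~\ref{markedvertices}.
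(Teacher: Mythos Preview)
Your proposal is correct and follows essentially the same approach as the paper's own proof: apply the partition procedure (via Corollary~\ref{cortemplate}) to obtain the refined $d$-template on $G[A\setminus M,B\setminus N]$, collapse it with Lemma~\ref{templatecollapse} into $4\pi^2$ parts, handle $M$ and $N$ with Lemma~\ref{markedvertices}, and concatenate. Your verification that the marked bags $M_i$ and $N_j$ land in one of the three target Free classes via the monotonicity observation (that $2\Lambda_{2d}$ and $2\up_{2d}$ embed into the respective forbidden patterns once $n',m''\ge 3$) is exactly the content of the paper's one-line remark ``as $m'',n'\ge 3$ one can see that\ldots''.
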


\begin{proof}
For the given graph $G=(A, B, \mathcal{E})$, apply the partition procedure described in Section~\ref{partitionprocedure}. As described in Lemma~\ref{markedvertices}, the vertices $M \subseteq A$ and $N \subseteq B$ can be partitioned into $3nkd^2$ sets $M_i$ and $N_i$ such that $G[M_i,B]$ and $G[A, N_i]$ are $2\Lambda_{2d}$-free and $2\up_{2d}$-free, respectively. In particular, as $m'', n' \geq 3$ one can see that these graphs are $(\overline{(m''-1)\Lambda_{k+2d}}^{bip})$-free and $((n'-1)\up_{k+2d})$-free, respectively.

The remaining graph $G[A \backslash M, B \backslash N]$ is a $(n', m', m'', k, \pi^2, d)$-template by Corollary~\ref{cortemplate}. Thus by Lemma~\ref{templatecollapse} the vertices of $G[A \backslash M, B \backslash N]$ can be partitioned into at most $4\pi^2$ parts such that between the parts the graph is either in $Free((n'-1)\up_{k+2d})$, $Free(\overline{(m'-1)\up_{k+2d}}^{bip})$, or in
$Free (\overline{(m''-1)\Lambda_{k+2d}}^{bip})$.

Putting together the partitions of $M$ and $A \backslash M$ and partitions of $N$ and $B \backslash N$, we obtain the required partition of $G$.  
\end{proof}

Applying the induction step obtained above as long as we can we obtain the following theorem.

\begin{theorem} \label{thm:starseither}
Let $G=(A, B, \mathcal{E})$ be a bipartite $(n'\Lambda_k, n''\up_k, \overline{m'\Lambda_k}^{bip}, \overline{m''\up_k}^{bip})$-free graph and let $\mu=n' + n'' + m' + m'')$. 
Then there are two constants $U'=U'(\mu ,k)$ and $s=s(\mu ,k)$ and a partition
$A=A_1 \cup A_2 \cup \ldots \cup A_u$, 
$B=B_1 \cup B_2 \cup \ldots \cup B_u$, with $u \leq U'$ such that
for any $1 \leq i, j \leq u$ we have $G[A_i, B_j]$ is either $2 \Lambda_s$-free or $2 \up_s$-free.
\end{theorem}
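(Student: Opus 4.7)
The plan is to prove Theorem~\ref{thm:starseither} by induction on $\mu = n' + n'' + m' + m''$, iterating Lemma~\ref{inductionstep} as the induction step.

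For the base case, if $\min(n', n'', m', m'') \leq 2$, then $G$ itself satisfies the conclusion with $u=1$ and $s=k$: $n' \leq 2$ makes $G$ already $2\Lambda_k$-free, and $n'' \leq 2$ makes $G$ already $2\up_k$-free. For $m' \leq 2$ or $m'' \leq 2$, I would use the bipartite self-complementarity of $2K_{1,k}$: the bipartite complement of $2\Lambda_k$ is isomorphic (as a bipartite graph, preserving the bipartition) to $2\Lambda_k$ itself by swapping the two centers, so $\overline{2\Lambda_k}^{bip}$-free coincides with $2\Lambda_k$-free, and analogously for $2\up_k$. Hence in all four base subcases, $G$ directly meets the conclusion.

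For the inductive step with $n', n'', m', m'' \geq 3$, I would apply Lemma~\ref{inductionstep} to obtain a partition of $A$ and $B$ into at most $T_0 := 4\pi^2 + 3nkd^2$ parts each, such that every bipartite subgraph $G[A_i, B_j]$ lies in one of $Free((n'-1)\up_{k+2d})$, $Free(\overline{(m'-1)\up_{k+2d}}^{bip})$, or $Free(\overline{(m''-1)\Lambda_{k+2d}}^{bip})$. Passing the original forbidden subgraphs through to size $k+2d$ and combining with the new constraint re-parameterizes each $G[A_i, B_j]$: in Case 1 the effective $\up$-count becomes $\min(n'-1, n'')$, in Case 2 the $\overline{\up}^{bip}$-count becomes $\min(m'-1, m'')$, and in Case 3 the $\overline{\Lambda}^{bip}$-count becomes $\min(m''-1, m')$. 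Applying the induction hypothesis to each subgraph and taking the common refinement across all pairs yields the final partition of size $U'(\mu, k) \leq T_0 \cdot U'(\mu-1, k+2d)^{T_0}$ with $s(\mu, k) \leq s(\mu-1, k+2d)$, so both quantities remain functions of $\mu$ and $k$ only after at most $O(\mu)$ recursive levels.

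The main obstacle is ensuring the strict decrease of $\mu$ at every subgraph. The delicate point is that the min operation can fail to strictly reduce any parameter, for instance in Case 1 when $n'' \leq n'-1$ the new constraint $(n'-1)\up_{k+2d}$-free is already implied by the inherited $n''\up_k$-free. To handle this, I would invoke as needed the symmetric form of Lemma~\ref{inductionstep} obtained by swapping the roles of $A$ and $B$, which interchanges $\Lambda \leftrightarrow \up$ and correspondingly $n' \leftrightarrow n''$, $m' \leftrightarrow m''$; its Case 1 yields $(n''-1)\Lambda_{k+2d}$-free, effectively reducing $n'$ to $\min(n''-1, n')$, which strictly drops exactly when the original Case 1 fails to reduce $n''$. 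Between the two forms of the lemma, some parameter always strictly decreases, so the recursion terminates in at most $\mu - 8$ levels and delivers the stated bounds.
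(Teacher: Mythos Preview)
Your overall strategy---induction on $\mu$ via Lemma~\ref{inductionstep}, with the trivial partition in the base case and a common refinement across all pairs in the step---is exactly the paper's proof. Your base-case justification via the bipartite self-complementarity of $2\Lambda_k$ and $2\up_k$ is correct and is precisely what the paper means by ``trivial''.

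The ``obstacle'' you raise, however, is not a genuine mathematical gap but an artefact of a notational slip in the paper. In the hypotheses of Lemma~\ref{inductionstep} and Theorem~\ref{thm:starseither} the roles of $\Lambda$ and $\up$ are accidentally swapped relative to the convention fixed at the start of Section~\ref{partitionprocedure} and used in Lemmas~\ref{initialcase}--\ref{generalcase2} and Corollary~\ref{cortemplate}: throughout the construction $n'$ is the $\up$-count, $n''$ the $\Lambda$-count, $m'$ the $\overline{\up}^{bip}$-count and $m''$ the $\overline{\Lambda}^{bip}$-count. With that intended reading, the three outcomes of Lemma~\ref{inductionstep} are that $G[A_i,B_j]$ is $(n'-1)\up_{k+2d}$-free, $\overline{(m'-1)\up_{k+2d}}^{bip}$-free, or $\overline{(m''-1)\Lambda_{k+2d}}^{bip}$-free, i.e.\ exactly one of the \emph{same} four parameters drops by~$1$ while the remaining three are inherited at level $k+2d$ from $G$. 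Hence the new quadruple has sum $\mu-1$ in every case, with no min-taking and no risk of stagnation; the paper accordingly just writes ``$\mu-1$'' without further comment. Your proposed symmetry fix (swapping $A$ and $B$) is therefore unnecessary, and as you describe it is also incomplete: you would have to commit to one form of the lemma before seeing which case each cell lands in, so the ``invoke as needed'' step would require an extra layer of refinement you have not accounted for.
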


\begin{proof}
We will prove the result by induction on $\mu=n'+n''+m'+m''$. The result is trivial if $n' \leq 2$, $n'' \leq 2$, $m' \leq 2$ or $m'' \leq 2$ as in that case we can take $U'(\mu, k)=1$ and $s(\mu, k)=k$. So suppose $n', m', n'', m'' \geq 3$ and that the result holds for all other quadruples with the sum less than $ \mu =n'+n''+m'+m''$. Applying Lemma~\ref{inductionstep} to the graph $G$, we obtain a partition $A=A_1 \cup A_2 \cup \ldots \cup A_c$, $B=B_1 \cup B_2 \cup \ldots \cup B_c$ with $c=\pi^2+3nkd^2$ parts such for all $1 \leq i,j \leq c$ the graph $G[A_i, B_j]$ is either in $Free((n'-1)\up_{k+2d})$, $Free(\overline{(m'-1)\up_{k+2d}}^{bip})$, or in
$Free (\overline{(m''-1)\Lambda_{k+2d}}^{bip})$. Thus induction hypothesis applies to each graph $G[A_i, B_j]$ showing that $A_i$ and $B_j$ can be subpartitioned into $U'(\mu-1, k+2d)$ parts such that between the parts the graph is either $2\Lambda_{s}$-free or $2 \up_{s}$-free for $s=s(\mu-1, k+2d)$ given by induction. Taking the refinement (intersection) of these partitions obtained for all pairs $A_i$ and $B_j$, we obtain a partition with $U'(\mu, k)=(U'(\mu-1, k+2d))^c$ parts such that between any two parts the graph is either $2\Lambda_{s}$-free or $2 \up_{s}$-free with $s=s(\mu,k)=s(\mu-1, k+2d)$.   
\end{proof}

Recall that our aim is to obtain a partition such that the graph between parts is both $2\Lambda_{2k-1}$-free and $2 \up_{2k-1}$ - free. For this, we will run a modified argument of partitioning into $d$-template two more times. This time the argument provided in the Lemma~\ref{orand} below is simpler as we have additional restrictions on the graph. 

\begin{lemma} \label{orand}
Let $G=(A, B, \mathcal{E})$ be a bipartite graph that is $(2\up_s, n\Lambda_k, \overline{n\Lambda_k}^{bip})$-free. Then we can partition $A=M_1 \cup M_2 \cup \ldots \cup M_u$ into $u \leq \phi=3nk(nks^2)^2+1$ parts such that for all $i$ the graphs $G[M_i, B]$ are $(2\up_s, 2 \Lambda_{2k-1})$-free.  
\end{lemma}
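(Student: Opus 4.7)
The plan is to reduce the problem to a chain-decomposition question on the poset of $B$-neighbourhoods restricted to a ``middle'' region of $B$, and then to bound the resulting antichain size using the three forbidden structures together with Lemma~\ref{boundeddegree}.

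First I would invoke the Observation preceding Lemma~\ref{covering} twice: applied to $G$ using $n\Lambda_k$-freeness, it yields $S\subseteq A$ with $|S|\leq n-1$ such that every $a\in A$ satisfies $|N(a)\setminus N(S)|<k$; applied to the bipartite complement $\overline{G}^{bip}$ using $\overline{n\Lambda_k}^{bip}$-freeness, it yields $S'\subseteq A$ with $|S'|\leq n-1$ such that every $a\in A$ satisfies $|N(S')\setminus N(a)|<k$. Writing $B^{\mathrm{mid}}=N(S)\setminus N(S')$, a short computation gives for any $a_1,a_2\in A$,
\[
|N(a_1)\setminus N(a_2)|\leq |(N(a_1)\cap B^{\mathrm{mid}})\setminus N(a_2)|+2(k-1).
\]
Hence if the sets $\{N(a)\cap B^{\mathrm{mid}}:a\in M\}$ form a chain under inclusion, then $G[M,B]$ contains no $2\Lambda_{2k-1}$, because at least one of any two candidate centres would have fewer than $2k-1$ exclusive neighbours.

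The task therefore reduces to partitioning $A$ into ``chains'' in the poset of $B^{\mathrm{mid}}$-neighbourhoods, with each antichain-witnessing vertex going into its own singleton bag; by a greedy argument the number of parts needed is bounded by the size of the largest antichain, and bounding this is the main technical step. For this I would combine the remaining $2\up_s$-freeness with Lemma~\ref{boundeddegree} in the spirit of Lemma~\ref{nmchaintemplate}: choose $b_0\in B^{\mathrm{mid}}$ of maximum degree so that every other $b\in B^{\mathrm{mid}}$ has at most $s-1$ neighbours outside $N(b_0)$. In the subgraph $G[A\setminus N(b_0),B^{\mathrm{mid}}\setminus\{b_0\}]$, which is still $n\Lambda_k$-free, the $B$-side degrees are bounded by $s$, so Lemma~\ref{boundeddegree} gives at most $nks^2$ vertices of $A\setminus N(b_0)$ with $B^{\mathrm{mid}}$-degree at least $k$. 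A symmetric application using $\overline{n\Lambda_k}^{bip}$-freeness treats vertices inside $N(b_0)$, and a second pass of Lemma~\ref{boundeddegree} applied now with the improved degree bound $d=nks^2$ to the ``high-complexity'' vertices bounds the antichain size by $nk(nks^2)^2$. A three-way case split (according to whether an antichain vertex lies in $N(b_0)$, has low $B^{\mathrm{mid}}$-degree, or was captured by the bipartite-complementary covering) contributes the factor of $3$ in the final bound $3nk(nks^2)^2$.

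Placing each antichain vertex into its own singleton bag and collecting the rest into a bulk bag $M_u$ yields the required partition into at most $\phi=3nk(nks^2)^2+1$ parts; the reduction of the first step guarantees $G[M_u,B]$ is $2\Lambda_{2k-1}$-free, while $2\up_s$-freeness is inherited from $G$. The hard part will be the quantitative case analysis for the antichain bound: $2\up_s$-freeness by itself yields only a quasi-chain structure on $B^{\mathrm{mid}}$, so obtaining the degree bound $d=nks^2$ needed to feed into Lemma~\ref{boundeddegree} requires iterating the covering argument in a way that depends simultaneously on all three of the forbidden structures and on carefully tracking the exceptional vertices left behind by each covering step.
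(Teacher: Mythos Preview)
Your reduction to the $B^{\mathrm{mid}}$-poset is sound: chains there do yield $2\Lambda_{2k-1}$-free parts, and Dilworth would bound the chain decomposition by the maximum antichain. But the proposal has both an internal inconsistency and a genuine gap.

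The inconsistency: you first invoke Dilworth (``number of parts bounded by the largest antichain''), but your final construction is ``each antichain vertex into its own singleton bag and the rest into a bulk bag $M_u$''. These are different statements. Dilworth partitions into chains, each possibly large; it does not say that deleting one maximum antichain leaves a chain. If you really intend the latter---a bounded set of exceptional vertices whose removal totally orders the remaining $B^{\mathrm{mid}}$-neighbourhoods---that is strictly stronger and is false in general.

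The gap: the antichain-bound sketch does not close. After choosing $b_0$ of maximum degree and applying Lemma~\ref{boundeddegree}, you control the vertices of $A\setminus N(b_0)$ with $B^{\mathrm{mid}}$-degree $\geq k$, but vertices inside $N(b_0)$, and low-degree vertices generally, can still form large antichains. Your ``second pass with $d=nks^2$'' names no bipartite subgraph to which Lemma~\ref{boundeddegree} applies, and the ``three-way case split'' is unspecified. The three hypotheses do not bound the width of the $B^{\mathrm{mid}}$-poset without some ordering argument; a one-shot covering does not suffice.

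The paper's route is different in kind, and the constant $\phi$ arises from an \emph{iteration}, not a single poset bound. One linearly orders $B$ by repeatedly extracting a minimum-degree vertex $b_i$ from the residual graph, setting $B_i=\{b_i\}$ and $A_i\subseteq N(b_i)$ (dropping the few vertices of forward co-degree $\geq k$). This yields two running properties: (P1) each $a\in A_i$ has co-degree $<k$ in $B_{>i}$; (P2) each $b_i$ has degree $\leq\delta=nks^2$ in $A_{>i}$, the bound coming from exactly your $b_0$/Lemma~\ref{boundeddegree} argument but applied locally at every step. Let $M$ be the vertices that also have $\geq k$ backward neighbours; then $A\setminus M$ is a single $2\Lambda_{2k-1}$-free bag by (P1). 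The set $M$ may be unbounded, but the trace argument of Lemma~\ref{markedvertices} partitions it: by (P2) and Lemma~\ref{boundeddegree} again, at most $nk\delta^2$ traces pass through any point of the order, so a greedy interval colouring gives $3nk\delta^2$ bags, each $2\Lambda_{2k-1}$-free. The factor $3$ is interval separation, not a case split; the $+1$ is the bulk bag $A\setminus M$. Thus the numerology you are reverse-engineering comes from iterating a local bound along an order, not from a global antichain estimate.
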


\begin{proof}
We proceed with the following partiton of $G=(A, B, \mathcal{E})$.
We let $B_1=\{b_1\}$ be a vertex with minimal degree in $B$ and let $A_1^+=N(b_1)$. Now, let $A'_1 \subseteq A_1^{+}$ consist of those vertices in  $A_1$ that have at least $k$ non-neighbours in $B \backslash B_1$. 

First of all, we will show that each vertex of $B \backslash B_1$ has at most $s$ non-neighbours in $A_1^+$. Indeed, take a vertex $b \in B \backslash B_1$. By our choice that $b_1$ is of minimal degree in $B$ it follows that $|N(b) \backslash N(b_1)| \geq |N(b_1) \backslash N(b)|$. Now $\{b_1, b_2, N(b) \backslash N(b_1), N(b_1) \backslash N(b)\}$ induces two stars and as the graph is $2\up_s$-free, we must have that $|N(b_1) \backslash N(b)|<s$. In other words, vertex $b$ has less than $s$ non-neighbours in $N(b_1)=A_1^+$.  

Now we claim that $|A_1'| \leq nks^2$. As each vertex in $B \backslash B_1$ has less than $s$ non-neighbours in $A_1^+$ and the graph is $\overline{n\Lambda_k}^{bip}$-free we can apply Lemma~\ref{boundeddegree} to the complement of the graph between $B \backslash B_1$ and $A_1^+$. We obtain that there are at most $nks^2$ vertices in $A_1^+$ of co-degree at most $k$ in $B \backslash B_1$, thus proving the claim.    

We proceed forming the partition of $G$ as follows. Let $A_1=A_1^+ \backslash A_1'$. We create all the other bags similarly. Suppose $B_1, A_1, \ldots, B_{i}, A_i$ have been created for some $1 \leq i < |B|$. Then we let $B_{i+1}^+=\{b_{i+1}\}$, where $b_{i+1}$ is a vertex from $B \backslash (B_1 \cup \ldots \cup B_i)$ which has minimal degree in $A \backslash (A_1 \cup A_2 \cup \ldots \cup A_i)$. We let $A_{i+1}^+=N(b_{i+1}) \backslash (B_1 \cup \ldots \cup B_i)$. Now let $A_{i+1}'$ be the subset of $A_{i+1}^+$ containing vertices of co-degree at least $k$ in $B \backslash (B_1 \cup \ldots \cup B_{i+1})$. By the same reasoning as provided for $A_1'$ it follows that $|A_{i+1}'|<nks^2$ and we set $A_{i+1}=A_{i+1}^+ \backslash A_{i+1}'$. In this way we construct the bags $B_i$ for $i=1,2, \ldots |B|$ containing one vertex of $B$. For $i=|B|+1$ we set $B_i = \emptyset$ and $A_i= A \backslash (A_1 \cup \ldots \cup A_{i-1})$.      

Now we can observe some properties of this partition:
\begin{itemize}
\item[(P1)] The vertices in $A_i$ have co-degree at most $k$ in $B \backslash (B_1 \cup \ldots \cup B_i)$. 
\item[(P2)] The vertices in $B_i$ have degree at most $\delta=nks^2$ in $A \backslash A_1 \cup \ldots \cup A_i$. 
\end{itemize}

Let $M$ be the set of all the vertices $v \in A$ such that $v \in A_i$ for some $i$ and $v$ has more than $k$ neighbours in $\cup_{h=1}^{i-1}B_h$. It is immediate that the graph $G[A \backslash M, B]$ is $2\Lambda_{2k-1}$-free. Indeed, take $v \in A_i$ and $w \in A_j$ with $i \leq j$. Then $N(v)$ has at most $k-1$ non-neighbours in $B_1
\cup \ldots \cup B_j$ and $N(w)$ has at most $k-1$ neighbours in $B \backslash B_1 \cup \ldots B_j$. Hence $|N(v) \backslash N(w)|\leq=2k-2$ and hence no two vertices in $A \backslash M$ can be centres of the two stars $2 \Lambda_{2k-1}$. Thus $A \backslash M$ is $2 \Lambda_{2k-1}$-free. 
 
In the rest of the proof we will show how to partition $M$ into $c=3nk\delta^2$ bags $M=M_1 \cup M_2 \cup \ldots \cup M_c$ such that the graph $G[M_i, B]$ is $2\Lambda_{2k-1}$-free for all $i$. The proof closely follows the proof of Lemma \ref{markedvertices}.

Let us start with a vertex $v \in M$. We will define a trace of a vertex $v$. First recall that $v \in M$ means that $v \in A_i$ for some $i$ such that $v$ has more than $k$ neighbours in $\cup_{h=1}^{i-1} B_h$. Now we find the smallest value $j$, such that $v$ has more than $k$ neighbours in $\cup_{h=1}^{j-1} B_h$. We define the trace of $v$ to be the interval $tr(v)=\{j, j+1, \ldots, i\}$. In the next two paragraphs we will explore several properties of the trace.  

Consider the set $T_l=\{v \in M: l \in tr(v)\}$. We claim that $|T_l| \leq nk\delta^2$. Consider the bipartite subgraph of $G$ induced by $\cup_{h=1}^{l-1} B_h$ and $T_l$. Note that $T_l \subseteq A \backslash \cup_{h=1}^{l-1} A_h$. By $(P2)$ we have that each vertex of $\cup_{h=1}^{l-1} B_h$ has at most $\delta$ neighbours in $T_l \subseteq A \backslash \cup_{h=1}^{l-1} A_h$. As the graph $G$ is $n \Lambda_k$ free, Lemma~\ref{boundeddegree} applies. From Lemma~\ref{boundeddegree} if follows that there are at most $nk\delta^2$ vertices in $T_l$ that have degree at least $k$ in $\cup_{h=1}^{p-1} B_h$. But as $T_l$ is defined to be a subset of $M$, all vertices of $T_l$ has degree at least $k$ in $\cup_{h=1}^{l-1} B_h$. Hence, we conclude that $|T_l| \leq nk\delta^2$.

Now, consider two vertices $v, v' \in M$ such that $tr(v)=\{j, j+1, \ldots, i \}$ and $tr(v')=\{j', j'+1, \ldots, i'\}$ with $j'>i$ such that $|j'-i| \geq 3$. Then, we claim that $|N(v') \backslash N(v)| \leq 2k-2$. Indeed, $N(v)$ has at most $k-1$ non-neighbours in $B \backslash \cup_{h=1}^{i+1}$ by $(P1)$. Also, as $j'=min\{tr(v')\}$, it follows that $N(v')$ has at most $k-1$ neighbours in $B_1 \cup B_2 \cup \ldots B_{j'-2}$. In particular, as $j' \geq i+3$, $N(v')$ has at most $k-1$ neighbours in $B_1 \cup B_2 \cup \ldots \cup B_{i+1}$. As $N(v)$ has at most $k-1$ non-neighbours in $B \backslash \cup_{h=1}^{i+1}$ and $N(v')$ has at most $k-1$ neighbours in $\cup_{h=1}^{i+1} B_{h}$, we conclude that $|N(v) \backslash N(v')| \leq 2k-2$.  

We can partition $M$ into $c=nk\delta^2$ subsets $M=M_1' \cup M_2' \cup \ldots \cup M_c'$ such that for any $i$ and any $v, v' \in M'_i$, we have $tr(v) \cup tr(v') = \emptyset$. This partition procedure that partitions vertices into $c$ sets such that traces of the vertices in each set are all disjoint is described in Lemma \ref{markedvertices}. This Lemma also shows that we can subpartition each set $M_i'$ into three $M_{3i-2}, M_{3i-1}$ and $M_{3i}$ to ensure that there are at least two natural numbers in the interval between two traces. Thus we obtain the sets $M_1, M_2, \ldots, M_{3c}$ such that $G[M_i, B]$ is $2\Lambda_k$-free. We let $M_{3c+1}=A \backslash M$ and note that this is a partition of $A=M_1 \cup \ldots \cup M_{3c+1}$ into $3c+1=3nk\delta^2+1=3nk(nks^2)^2+1$ parts such that $G[M_i, B]$ induces $2\Lambda_{2k-1}$-free graph for all $i$. Hence we are done.
\end{proof}

Now we are ready to deduce Theorem~\ref{thm:stars}.

\begin{proof}[Proof of Theorem~\ref{thm:stars}]
The proof now easily follows from Theorem~\ref{thm:starseither} and Lemma~\ref{orand}. We start with the partition provided by Theorem~\ref{thm:starseither} into $U'(\mu, k)$ parts such that the graph between the parts is either $2\up_s$-free or $2\Lambda_s$-free for $s=s(\mu, k)$. Applying Lemma~\ref{orand} to each pair of bags that are $2\up_s$-free (resp. $2\Lambda_s$-free) we obtain a subpartition that is $(2\up_s, 2 \Lambda_{2k-1})$-free (resp. $(2\Lambda_s, 2 \up_{2k-1})$-free). This provides us with refinement into $U''=U'\phi^{U'}$ bags such that the graph is either $(2\up_s, 2 \Lambda_{2k-1})$-free or $(2\Lambda_s, 2 \up_{2k-1})$-free. A second application of Lemma~\ref{orand} to each pair of refined bags provides with a refined partition into $U=U''\phi^{U''}$ bags that are $(2\up_{2k-1}, 2 \Lambda_{2k-1})$-free. This completes the proof. 
\end{proof}

%%%%%%%%%%%%%%%%%%%%%%%%%%%%%%%%%%%%%%%%%%%%%%%%%%
%%%%%%%%%%%%%%%%%%%%%%%%%%%%%%%%%%%%%%%%%%%%%%%%%%
%%%%%%%%%%%%%%%%%%%%%%%%%%%%%%%%%%%%%%%%%%%%%%%%%%

\section{General graphs without a union of stars and complements of a union of stars} \label{section:starsg}

To extend our result to general graphs without union of stars we will need the following result that bounds cochromatic number of graphs:
\begin{lemma}\label{cochromatic}
For every $n,k,l \in \mathbb{N}$ there exists a number $z(n,k,l)$ such that any graph $G \in Free(nK_{1,k}, $ $\overline{nK_{1,k}}, nK_l, \overline{nK_l})$ has cochromatic number at most $z(n,k,l)$. 
\end{lemma}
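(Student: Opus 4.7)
The plan is to prove Lemma~\ref{cochromatic} by induction on $n$, extending the strategy used to prove Lemma~\ref{comatching} for the special case $k=1$. For the base case $n=1$, the graph $G$ is in particular both $K_l$-free and $\overline{K_l}$-free, so Ramsey's theorem gives $|V(G)| \leq R(l,l)-1$ and hence the cochromatic number is trivially at most $R(l,l)-1$; this suggests $z(1,k,l)\leq R(l,l)-1$.

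For the inductive step $n\geq 2$, the idea is to find inside $G$ a bounded-size witness subgraph $T$ playing the role that $C_4$ and $2K_2$ played in the proof of Lemma~\ref{comatching}, and to partition $V(G)\setminus T$ based on adjacency patterns to $T$ so that each bag lies in a class with strictly smaller forbidden-subgraph parameters. Specifically, if $G$ lies in $Free(K_l,\overline{K_l},K_{1,k},\overline{K_{1,k}})$, then Ramsey's theorem bounds $|V(G)|$ directly and we are done; otherwise $G$ contains at least one of these four small graphs, say (by complementation symmetry) a clique $C=K_l$. Partition $V(G)\setminus C$ into at most $2^l$ bags indexed by the adjacency pattern $N(v)\cap C=S\subseteq C$. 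The bag $B_\emptyset$ of vertices co-joined to $C$ lies in $Free((n-1)K_l)$ because otherwise $(n-1)$ pairwise non-adjacent copies of $K_l$ in $B_\emptyset$ together with $C$ would form an induced $nK_l$ in $G$. To reduce the other three parameters simultaneously, one also locates disjoint induced copies of $\overline{K_l}$, $K_{1,k}$, and $\overline{K_{1,k}}$ inside $G$ (by iterated Ramsey arguments applicable once $G$ is sufficiently large) and refines the partition by taking the common refinement of the four adjacency-pattern partitions. After refinement the number of bags is bounded in $n,k,l$ and each bag lies in the strictly smaller class $Free((n-1)K_{1,k},\overline{(n-1)K_{1,k}},(n-1)K_l,\overline{(n-1)K_l})$, to which the induction hypothesis applies; summing cochromatic bounds over the bags yields the desired $z(n,k,l)$.

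The main obstacle is verifying that every bag of the refined partition genuinely lies in the fully reduced class. Naively, the witness $C=K_l$ only reduces the $nK_l$-parameter for the single bag $B_\emptyset$, and analogous single-parameter reductions hold for the other three witnesses, so bags with mixed adjacency patterns may reduce none of the four parameters. Overcoming this will require either enlarging each witness (so that by a pigeonhole argument every external vertex is forced to be co-joined to some $l$-subset of the enlarged $C$ inducing a $K_l$, joined to some $l$-subset of the enlarged $\overline{K_l}$ inducing an $\overline{K_l}$, and similarly for stars and co-stars), combined with a multi-parameter induction on $(n_1,n_2,n_3,n_4)$ where $G$ avoids $n_iK_\cdot$ or $\overline{n_iK_\cdot}$ respectively, or invoking the $\chi$-boundedness-type structural results of \cite{cs14,kp94} on classes defined by forbidden star forests and their complements, which provide the needed simultaneous reductions. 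In either case the resulting bound $z(n,k,l)$ is at most doubly exponential in $n,k,l$, in line with the bound obtained for Lemma~\ref{comatching}.
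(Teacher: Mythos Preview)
The paper takes exactly your second alternative and nothing else: it cites Chudnovsky--Seymour \cite{cs14} to get that every $(nK_l,\overline{nK_l})$-free graph is $p$-split for some $p=p(n,l)$, and then Kierstead--Penrice \cite{kp94} (observing that $nK_{1,k}$ is an induced subgraph of a radius-two tree) to conclude that the $K_p$-free side has chromatic number at most $f_{n,k}(p)$; by complementation the $\overline{K_p}$-free side has clique-cover number at most $f_{n,k}(p)$, giving $z(n,k,l)=2f_{n,k}(p)$ in one step with no induction on $n$.

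Your first alternative---the elementary induction extending Lemma~\ref{comatching} with enlarged witnesses---has a gap that the pigeonhole fix does not close. Two concrete obstructions. First, you cannot assume $G$ simultaneously contains large induced witnesses of all four types: Ramsey gives a large clique \emph{or} a large independent set, not both, and says nothing about induced stars. Second, even when all four enlarged witnesses are present, the reductions are not exhaustive. A vertex adjacent to the centre of your enlarged star $K_{1,K}$ is co-joined to no induced $K_{1,k}$ inside it (the centre is unique); a vertex with fewer than $l$ non-neighbours in the enlarged clique and fewer than $l$ neighbours in the enlarged independent set is neither co-joined to a $K_l$ nor joined to a $\overline{K_l}$. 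Such a vertex lands in a bag where none of the four parameters drops, so even the multi-parameter induction stalls. That this is not an artefact of the argument is witnessed by the fact that for $n,l\ge 3$ the class $Free(nK_l,\overline{nK_l})$ contains all bipartite graphs and hence has unbounded cochromatic number: the star-forest exclusions carry essential weight, and that weight is precisely the $\chi$-boundedness of $Free(nK_{1,k})$, i.e.\ Kierstead--Penrice. A self-contained version of your first route would have to reprove that result.
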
 

The proof of Lemma~\ref{cochromatic} follows from the works of Kierstead and Penrise \cite{kp94} and Chudnovsky and Seymour in \cite{cs14}. For completeness, we will provide a short outline of the proof in this section. First of all, we will start with several definitions.  

For a graph G, $\chi(G)$ denotes the \emph{chromatic number} of a graph - the minimal number of independent sets that the vertices of the graph $G$ can be partitioned into. $\omega(G)$ denotes the \emph{clique number} - the number denoting the maximum size of a clique contained in the graph $G$. A graph classs $C$ is said to be $\chi$-bounded, with $\chi$-binding function $f$, if for all $G \in C$, $\chi(G) \leq f(\omega(G))$. A connected graph $G$ is said to be \emph{of radius at most $r$} if there is a vertex $v \in V(G)$ such for every $w \in V(G)$ there is a path in $G$ connecting $v$ and $w$ with at most $r$ edges. We say that a connected graph $G$ is \emph{of radius $r$} if $r$ is the smallest integer for which $G$ is of radius at most $r$.  

Kierstead and Penrise in \cite{kp94} have shown the following result:
\begin{theorem}
For any tree $T$ of radius two, the class $C=Free(T)$ is $\chi$-bounded. 
\end{theorem}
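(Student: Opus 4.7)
The plan is to decompose the problem using the centre of $T$. Let $c$ be the centre of $T$, so that every vertex of $T$ lies within distance two of $c$, and set $S := T - c$. Then $S$ is a disjoint union of stars (a star forest) of the form $S = K_{1,k_1} \cup \dots \cup K_{1,k_t}$. The key observation is that for any $G \in Free(T)$ and any vertex $v \in V(G)$, the open neighbourhood $G[N(v)]$ must be $S$-free, for otherwise placing $v$ at the centre of a copy of $S$ inside $N(v)$ would produce a copy of $T$. This reduces the theorem to two subclaims: (i) star-forest-free classes are $\chi$-bounded, and (ii) this ``local'' $\chi$-boundedness on neighbourhoods lifts to a global bound for $Free(T)$.

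For (i), I would argue by induction on the number of components $t$. The base case $t = 1$ is classical: if $G \in Free(K_{1,k})$ with $\omega(G) = \omega$, then no neighbourhood contains either $k$ pairwise non-adjacent vertices or a clique on $\omega+1$ vertices, so Ramsey's theorem forces $\Delta(G) < R(k,\omega+1)$ and hence $\chi(G) \le \Delta(G)+1 \le R(k,\omega+1)$. For $t \ge 2$, write $S = K_{1,k_t} \cup S'$. In a graph $G \in Free(S)$, greedily extract a maximal collection of vertex-disjoint copies of $K_{1,k_t}$; the leftover vertices induce a $K_{1,k_t}$-free subgraph (bounded $\chi$ by the base case), while around each extracted centre $v$ the set $N(v)$ is $S'$-free (otherwise $S$ embeds using $v$), so $\chi(G[N(v)]) \le f_{S'}(\omega)$ by the inductive hypothesis. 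Scheduling these local colourings on disjoint pieces yields the desired $\chi$-bound.

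For (ii), I would deploy an onion-peeling argument. Fix $G \in Free(T)$ with $\omega(G) = \omega$; by (i) applied to each neighbourhood, $\chi(G[N(v)]) \le g(\omega)$ for some function $g$. Assuming $G$ is connected, fix a root $v_0$ and form the BFS layering $L_0, L_1, L_2, \ldots$. For each non-root $v$ pick a parent $p(v) \in L_{d(v)-1} \cap N(v)$ and partition $V(G)$ according to the parity of the layer. Within each parity class, vertices sharing a common parent lie inside a single $N(p)$ and so can be coloured with $g(\omega)$ colours by the local bound. Reusing the same palette across distinct parent-clusters of the same parity, after a constant-factor refinement to handle intra-layer edges, produces a global colouring with $O(g(\omega))$ colours, which is the required $\chi$-bound.

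The main obstacle will be step (ii): a purely local bound on $\chi(G[N(v)])$ does not in general control $\chi(G)$ (odd cycles are a stark warning), so the argument must genuinely exploit the tree structure of $T$ rather than just the neighbourhood-based consequence of $T$-freeness. Making the BFS/onion-peeling recursion interact cleanly with intra-layer edges, and obtaining a palette of colours that can be consistently reused across sibling clusters, is where the delicate work of Kierstead and Penrose lies, and where I would expect my plan to need the most care when turned into a full proof.
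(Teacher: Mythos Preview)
The paper does not prove this theorem at all; it is quoted as a result of Kierstead and Penrice and then used as a black box (via the corollary that $Free(nK_{1,k})$ is $\chi$-bounded) in the proof of Lemma~\ref{cochromatic}. There is therefore no proof in the paper to compare your proposal against.

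As to the proposal itself, the overall architecture --- reduce to star forests on neighbourhoods, then lift a local bound to a global one via BFS levelling --- is indeed the shape of the Kierstead--Penrice argument, but your step (i) contains a genuine error, not just a gap. You claim that if $v$ is the centre of one of the extracted copies of $K_{1,k_t}$ then $G[N(v)]$ is $S'$-free, ``otherwise $S$ embeds using $v$''. This is false: any copy of $S'$ inside $N(v)$ has every vertex adjacent to $v$, so the putative $K_{1,k_t}$ centred at $v$ is \emph{not} anticomplete to that $S'$, and you do not obtain an induced $S=K_{1,k_t}\cup S'$. A workable version extracts instead a maximal family of pairwise \emph{anticomplete} induced copies of $K_{1,k_t}$ (there are at most $t-1$ of them), notes that what lies outside their closed neighbourhoods is $K_{1,k_t}$-free, and handles the closed neighbourhoods by induction on $\omega$ (each $G[N(x)]$ has clique number at most $\omega-1$).

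For step (ii) you already identify the obstacle correctly. The BFS layering is the right starting point, but the sentence ``after a constant-factor refinement to handle intra-layer edges'' is exactly where the substance of the Kierstead--Penrice paper lives: intra-layer edges need not join siblings, and colourings of different parent-neighbourhoods need not be consistent. Turning this into a bound requires a considerably more elaborate argument than your sketch suggests.
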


It is interesting to note that a conjecture of Gy\'arf\'as asserts that the above statement should be true for any tree $T$ (not only of radius two), but only partial results have been proven so far. For our purposes, we need the following corollary from the result of Kierstead and Penrise:

\begin{theorem}\label{kaibounded}
For every $n, k \in \mathbb{N}$ the class $C=Free(nK_{1,k})$ is $\chi$-bounded.
\end{theorem}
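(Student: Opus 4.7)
The plan is to deduce Theorem~\ref{kaibounded} as an immediate corollary of the theorem of Kierstead and Penrise stated just above. The key observation is that although $nK_{1,k}$ is disconnected and therefore not itself a tree, it embeds as an induced subgraph into a carefully chosen tree $T$ of radius two.

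First I would define the ``broom'' tree $T = T_{n,k}$ as follows: take a central vertex $v$, attach $n$ neighbours $u_1,\dots,u_n$ to $v$, and attach $k$ new leaves $w_{i,1},\dots,w_{i,k}$ to each $u_i$. Then $T$ is connected, acyclic, and every vertex is within distance $2$ of $v$, so $T$ is a tree of radius at most two. Next I would verify that $nK_{1,k}$ is an induced subgraph of $T$: the vertex set $\{u_i : 1 \le i \le n\} \cup \{w_{i,j} : 1 \le i \le n,\ 1 \le j \le k\}$ induces exactly the edges $u_i w_{i,j}$ (since the only edges of $T$ among these vertices are of this form), which is precisely $n$ disjoint copies of $K_{1,k}$.

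From this inclusion, every graph $G$ that contains $T$ as an induced subgraph also contains $nK_{1,k}$ as an induced subgraph. Contrapositively, $Free(nK_{1,k}) \subseteq Free(T)$. By the preceding theorem of Kierstead and Penrise, $Free(T)$ is $\chi$-bounded by some function $f = f_{n,k}$, i.e.\ $\chi(G) \le f(\omega(G))$ for every $G \in Free(T)$. Since $Free(nK_{1,k})$ is contained in $Free(T)$, the same $\chi$-binding function $f$ works for $Free(nK_{1,k})$, which proves the theorem.

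There is no real obstacle here: the entire content of the proof is the one-line construction of the radius-two ``broom'' $T$ together with the verification that deleting its centre yields $nK_{1,k}$. The theorem is thus essentially a packaging of Kierstead--Penrise for the specific forbidden subgraphs $nK_{1,k}$ that are relevant to the rest of the paper.
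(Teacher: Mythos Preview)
Your proof is correct and follows essentially the same route as the paper: both construct the same radius-two ``broom'' tree $T_{n,k}$ by attaching a new centre to the $n$ star centres, observe that $nK_{1,k}$ sits inside $T_{n,k}$ as an induced subgraph so that $Free(nK_{1,k}) \subseteq Free(T_{n,k})$, and then invoke Kierstead--Penrice. Your write-up is in fact slightly more explicit in verifying the induced containment.
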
 

\begin{proof}
Let $T_{n,k}$ be a tree of radius two formed by adding an extra vertex $v$ to $nK_{1,k}$ and extra edges from $v$ to the centre of each star $K_{1,k}$. Then by Theorem~\ref{kaibounded}, the class $Free(T_{n,k})$ is $\chi$-bounded. That is, there is some $\chi$-binding function $f_{n,k}$, such that for all $G \in Free(T_{n,k})$, $\chi(G) \leq f_{n,k}(\omega(G))$. But then, as $Free(nK_{1,k})$ is a subclass of $Free(T_{n,k})$, we must have $\chi(G) \leq f_{n,k}(\omega(G))$ for every graph $G \in Free(nK_{1,k})$. Thus we conclude that the class $Free(nK_{1,k})$ is $\chi$-bounded.    
\end{proof}

Chudnovsky and Seymour in \cite{cs14} have established a connection between bounded cochromatic number and $\chi$-boundedness. The connection is through the notion of \emph{$p$-split graphs} which is defined as a graph for which the vertices can be partitioned into two parts, one of which does not contain a clique on $p$ vertices and the other does not contain an independent set on $p$ vertices. They prove that a class forbidding a union of cliques and a complement of union of cliques consists of $p$-split graphs for some fixed $p$. We state this result as follows:

\begin{theorem}\label{psplit}
For any $n, l \in \mathbb{N}$, there exist $p=p(n,l)$ such that every $(nK_l, \overline{nK_l})$-free graph is $p$-split. 
\end{theorem}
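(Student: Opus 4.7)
The plan is to prove Theorem~\ref{psplit} directly, avoiding induction, by combining a maximal clique packing in $G$ with a maximal clique packing in $\overline{G}$. No Ramsey input is needed for the main case.

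First, using the $nK_l$-freeness of $G$, any maximal family of pairwise disjoint $K_l$-subgraphs $\{M_1,\dots,M_t\}$ in $G$ satisfies $t\le n-1$, so $A:=M_1\cup\cdots\cup M_t$ has $|A|\le(n-1)l$ and, by maximality, $B:=V(G)\setminus A$ is $K_l$-free, i.e.\ $\omega(G[B])\le l-1$. Applying the same argument in $\overline{G}$, the $\overline{nK_l}$-freeness yields a second partition $V(G)=A'\sqcup B'$ with $|A'|\le(n-1)l$ and $\alpha(G[B'])\le l-1$. Take the common refinement into the four cells $V_1=A\cap A'$, $V_2=A\cap B'$, $V_3=B\cap A'$, $V_4=B\cap B'$, and read off: $\omega(G[V_3]),\omega(G[V_4])\le l-1$ (these sit in $B$), $\alpha(G[V_2])\le l-1$ (it sits in $B'$), and $\alpha(G[V_1])\le|V_1|\le(n-1)l$.

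Next I would collapse back to a 2-partition by setting $X:=V_3\cup V_4$ (the low-$\omega$ side) and $Y:=V_1\cup V_2$ (the low-$\alpha$ side), and invoke the elementary sub-additivity $\omega(G[U\sqcup W])\le\omega(G[U])+\omega(G[W])$ (any clique in a disjoint union splits as cliques in the two parts) together with the analogous bound for $\alpha$. This gives $\omega(G[X])\le 2(l-1)$ and $\alpha(G[Y])\le(n-1)l+(l-1)=nl-1$, so $V(G)=X\sqcup Y$ exhibits $G$ as a $p$-split graph with $p(n,l):=nl$. The degenerate case $n=1$ is handled separately: a $(K_l,\overline{K_l})$-free graph has bounded order by Ramsey's theorem and fits in a single part with $p=l$.

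There is essentially no obstacle here; the whole argument is elementary, and the only point to notice is the sub-additivity of $\omega$ and $\alpha$ under disjoint unions, which lets the four-cell refinement collapse back into a genuine 2-split. In particular, the $\chi$-boundedness results of Kierstead--Penrice and the deeper Chudnovsky--Seymour techniques cited for Lemma~\ref{cochromatic} are not actually needed for this particular statement.
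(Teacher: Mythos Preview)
Your argument has a genuine gap at the very first step. You claim that, because $G$ is $nK_l$-free, any maximal family of pairwise vertex-disjoint $K_l$-subgraphs $\{M_1,\dots,M_t\}$ satisfies $t\le n-1$. This is false: in this paper (and throughout the subject) ``$nK_l$-free'' means free of $nK_l$ as an \emph{induced} subgraph, which requires not only $n$ disjoint copies of $K_l$ but also \emph{no edges between the copies}. Having many vertex-disjoint $K_l$'s with edges between them is perfectly compatible with being induced-$nK_l$-free. Concretely, for $n\ge 2$ and $l\ge 2$ the complete graph $K_m$ is $(nK_l,\overline{nK_l})$-free for every $m$ (neither $nK_l$ nor its complement is itself a clique), yet $K_m$ contains $\lfloor m/l\rfloor$ pairwise disjoint $K_l$'s, so $t$ is unbounded. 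Consequently $|A|\le (n-1)l$ fails, and the rest of the argument (which ultimately just sets $X=B$, $Y=A$) collapses. The dual step in $\overline{G}$ has the symmetric defect and cannot rescue it.

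For comparison, the paper does not attempt its own proof of this statement: it simply quotes the result from Chudnovsky and Seymour~\cite{cs14}. So there is no ``paper's proof'' to match, but the point stands that the theorem is not as elementary as your write-up suggests; the passage from induced-$nK_l$-freeness to a bounded-size set whose removal kills all $K_l$'s genuinely requires more work than a greedy packing.
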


This now brings us to the proof of Lemma~\ref{cochromatic}:

\begin{proof}
Consider a graph $G$ in $Free(nK_{1,k}, \overline{nK_{1,k}}, nK_l, \overline{nK_l})$. Then as $G$ is $Free(nK_l, \overline{nK_l})$, by Theorem~\ref{psplit} we have that vertices of $G$ can be partitioned into two parts A and B, such that $G[A]$ is $K_p$-free and $G[B]$ is $\overline{K_p}$-free for some $p=p(n,l)$. As $G[A]$ is in $Free(nK_{1,k})$, by Theorem~\ref{kaibounded} it is $\chi$-bounded with some $\chi$-binding function $f_{n,k}$. As $\omega(G[A])<p$, this implies that $\chi(G) \leq f_{n,k}(p)$, thus this graph can be partitioned into $f_{n,k}(p)$ independent sets. Applying the argument to the complement of $G[B]$, we can deduce that $G[B]$ can also be partitioned into constant number $f_{n,k}(p)$ of cliques. Thus, noting that $p=p(n,l)$ every graph in the class has co-chromatic number at most $z(n,k,l) = 2f_{n,k}(p)$ which is a constant depending on the sizes of the input graphs.  
\end{proof}

Thus we are now capable of concluding with our main result of this paper, 
Theorem~\ref{main}:

\begin{proof}
By Lemma~\ref{cochromatic} it follows that for any $n, k, l \in \mathbb{N}$ given, there is $z(n,k,l)$ such that the vertices of any graph $G \in Free(nK_{1,k}, \overline{nK_{1,k}}, nK_k, \overline{nK_k})$ can be partitioned into $z(n,k,l)$ sets, each of which induce either a clique or an independent set. Now, we can apply Theorem~\ref{thm:stars} to each pair of the $z(n,k,l)$ sets. For each pair of sets, Theorem~\ref{thm:stars} gives us a partition into $U=U(4n,k)$ parts with $(\Lambda_{2k-1}, \up_{2k-1})$-free graphs between the parts. Taking the refinement (intersection) of all these partitions we obtain a partition into $T=z U^{z}$ parts such that between the parts the graph is $(\Lambda_{2k-1},\up_{2k-1})$-free. 
\end{proof}

\section{Classes of graphs and factorial speed of growth} \label{section:factorial}

In this section we will prove Theorem~\ref{classes}. As this theorem contains three statements, we will prove these separately in Lemma~\ref{classes1}, Lemma~\ref{classes2} and Lemma~\ref{classes3}. We start with the characterisations of the graph classes whose graphs admit partitions of Theorem~\ref{main} and Theorem~\ref{thm:matchings}. 

\begin{lemma}\label{classes1}
For a hereditary class $\X$ there exists a constant $T=T(\X)$ such that each graph in $\X$ is a $(T,1)$-graph if and only if $\X$ does not contain classes $\Y_1, \Y_2, \ldots, \Y_6$. 
\end{lemma}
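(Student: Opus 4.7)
The plan is to prove both directions of the equivalence by reducing to Theorem~\ref{thm:matchings}, which already handles the quantitative partition statement once the forbidden subgraphs are in place.

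For the backward direction, observe that each $\Y_i$ ($i=1,\ldots,6$) is the hereditary closure of an infinite family of the form $\{G^j_{n,1}\}_{n}$ or $\{H^j_{n,1}\}_{n}$ (for $k=1$ the graphs $G^2_{n,1}$ and $G^3_{n,1}$ coincide, as do $H^2_{n,1}$ and $H^3_{n,1}$, leaving six distinct families). Since $\X$ is hereditary, the condition $\X \not\supseteq \Y_i$ is equivalent to the existence of some index $n_i$ for which the $n_i$-th graph of the $i$-th family lies outside $\X$, so by heredity no graph of $\X$ contains it as an induced subgraph. Taking $n = \max_i n_i$ yields $\X \subseteq Free(\F_{n,1})$, and Theorem~\ref{thm:matchings} supplies the constant $T(n)$ making every member of $\X$ a $(T,1)$-graph.

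For the forward direction, assume each graph in $\X$ is a $(T,1)$-graph. I will show, for each family generating some $\Y_i$, that sufficiently large members of that family are not $(T,1)$-graphs, which forces $\X \not\supseteq \Y_i$. The matching case is prototypical: given a partition of $NK_2$ into at most $T$ cliques and independent sets, each clique part can absorb at most one matching edge (since $NK_2$ has maximum degree one), so at least $N-T$ edges are split between pairs of distinct parts; pigeonholing over the $\binom{T}{2}$ pairs, for $N$ large in terms of $T$ some pair of parts carries two split matching edges, which form an induced $2K_2$ between those parts, violating $2K_2$-freeness (recall $2\Lambda_{1}=2\up_{1}=2K_2$ for $k=1$). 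For the ``cliqued'' graphs $G^2_{n,1}, G^4_{n,1}$ one first pigeonholes the vertices of a large clique into a few bags, then pigeonholes the matching partners into a few bags, again producing a $2K_2$ between some pair of bags; the complementary families $H^i_{n,1}$ are handled symmetrically after noting that the bipartite complement of $2K_2$ is again $2K_2$, so the same form of obstruction arises between parts.

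The main obstacle will be assembling the six-family forward argument cleanly: while the pure matching/comatching cases require only one pigeonhole step, the cliqued and co-cliqued variants require two nested pigeonhole steps and careful case analysis depending on whether the heavy bag is declared a clique or an independent set in the $(T,1)$-partition. The calculations are routine given the low degrees and codegrees enforced by the matching structure, and collecting the six resulting thresholds and combining with the bound from Theorem~\ref{thm:matchings} closes the proof.
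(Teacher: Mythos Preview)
Your proposal is correct and follows essentially the same strategy as the paper: the backward direction is identical (excluding each $\Y_i$ gives a finite $n$ with $\X\subseteq Free(\F_{n,1})$, then apply Theorem~\ref{thm:matchings}), and the forward direction is a pigeonhole argument on a putative $(T,1)$-partition of a large member of some $\Y_i$. The only difference is that the paper does the forward direction more uniformly: it takes $G=(T^2+1)K_2$ (or the corresponding variant), assigns to each of the $T^2+1$ distinguished matching pairs the ordered pair $(i,j)$ of bag indices of its endpoints, and a single pigeonhole over the $T^2$ ordered pairs immediately yields either a non-homogeneous bag (if $i=j$) or a bipartite $2K_2$ (if $i\neq j$); this one-line argument covers all six families without your nested pigeonhole or case analysis on clique versus independent bags.
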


\begin{proof}
If the class $\X$ does not contain any of the classes $\Y_1, \Y_2, \ldots, \Y_6$, then $\X$ must exclude at least one graph from each class. Hence it follows that there exists an integer $n$ such that $\X \subseteq Free(\F_{n,1})$  and hence by Theorem~\ref{thm:matchings} there exists a number $T=T(n)$ such that every graph of $\X$ is a $(T,1)$-graph.

Suppose now, the class $\X$ contains $\Y_1$ and suppose, for contradiction, that there exists $T=T(\X)$ such that each graph in $\X$ is a $(T,1)$-graph. As $\X$ contains $\Y_1$, it follows that $\X$ contains arbitrarily large matchings, in particular $G=(T^2+1)K_2 \in \X$. By assumption, $G$ is a $(T,1)$-graph, so admits a partition into $T$ sets $V(G)=V_1 \cup V_2 \cup \ldots \cup V_T$ such that each set induces an independent set or a clique, and between each pair of sets, the induced bipartite subgraph is $2K_2-free$. For each pair of vertices $(v, w) \in V(G) \times V(G)$ such that $vw \in E(G)$ assign a pair of integers $(i,j)$ such that $v \in V_i$ and $w \in V_j$. As there are $T^2+1$ distinct edges in $G$, and $T^2$ possible pairs of integers, by pigeonhole principle two distinct edges will be assigned the same pair $(i,j)$. If $i=j$ this means that the bag $V_i$ contains a $2K_2$ which contradicts an assumption that each bag is either a clique or an independent set, while if $i \neq j$, we have a $2K_2$ in the bipartite graph induced between bags $V_i$ and $V_j$ contradicting the assumption that between the bags the induced subgraph is $2K_2$-free. A similar argument can be applied if $\X$ contains any of the classes $\Y_2, \Y_3, \ldots, \Y_6$.  
\end{proof}

\begin{lemma}\label{classes2}
For a hereditary class $\X$ there exist two constants $T=T(\X)$ and $k=k(\X)$ such that each graph in $\X$ is a $(T,k)$-graph if and only if $\X$ does not contain classes $\X_1, \X_2, \ldots, \X_{10}$. 
\end{lemma}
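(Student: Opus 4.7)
The plan is to mirror the proof of Lemma~\ref{classes1}: the ``only if'' direction is a direct consequence of Theorem~\ref{main}, while the ``if'' direction produces, for every excluded class $\X_i$ and every candidate pair $(T,k)$, an explicit graph in $\X_i$ that is not a $(T,k)$-graph. For the ``only if'' direction, if $\X$ contains no $\X_i$ then, since each $\X_i$ is the hereditary closure of an explicit sequence and $\X$ is itself hereditary, some specific generator ($G^i_{n_i,k_i}$ for $i=1,\ldots,4$; $H^{i-4}_{n_i,k_i}$ for $i=5,\ldots,8$; $n_9K_{l_9}$; or $\overline{n_{10}K_{l_{10}}}$) must be forbidden in $\X$; taking the maxima of these parameters gives integers $n,k,l$ with $\X\subseteq Free(\F_{n,k}\cup\{nK_l,\overline{nK_l}\})$, and Theorem~\ref{main} then supplies the required constants.

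For the ``if'' direction, the key preliminary observation is that the class of $(T,k)$-graphs is closed under complementation: the same vertex partition works, cliques and independent sets swap roles, and the ``near-nestedness'' characterisation of $(2\Lambda_{2k-1},2\up_{2k-1})$-freeness is self-dual under bipartite complementation. Hence $\X_5,\ldots,\X_8$ reduce to $\X_1,\ldots,\X_4$ and $\X_{10}$ reduces to $\X_9$ by complementing the witness. The case $\X_9$ is settled by a crude vertex count: in any $(T,k)$-partition of $(T+1)K_{T+1}\in\X_9$ every clique part lies in a single component (so has at most $T+1$ vertices) and every independent part meets each component in at most one vertex (so also has at most $T+1$ vertices), giving total capacity $T(T+1)<(T+1)^2$, a contradiction.

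The substantive case, which I handle uniformly for $\X_1,\X_2,\X_3,\X_4$, is a pigeonhole argument on star centres. Fix $T,k$, set $K=(T-1)(2k-2)+1$ and $n=T^2+1$, and consider $G=G^i_{n,K}$ with centres $c_1,\ldots,c_n$ and pairwise disjoint leaf sets $L_1,\ldots,L_n$ of size $K$. The crucial uniform observation is that in each of $G^1,\ldots,G^4$, any two centres $c_p,c_q$ have identical neighbourhoods outside $\{c_p,c_q\}\cup L_p\cup L_q$ (either empty, or the set of all other centres). Suppose $G$ admits a $(T,k)$-partition; pigeonhole gives a part $P$ with at least $T+1$ centres, and since centres are either pairwise adjacent (in $G^2,G^4$) or pairwise non-adjacent (in $G^1,G^3$), $P$ is either a clique or an independent set accordingly. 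In both cases no leaf of any centre in $P$ can lie in $P$ itself: a clique containing two distinct centres cannot contain any leaf (a leaf is adjacent to only one centre), and an independent set cannot contain a centre together with one of its own leaves. Hence for any two centres $c_p,c_q\in P$ and any other part $P'$ the shared non-leaf contribution to $N(c_p)\cap P'$ and $N(c_q)\cap P'$ cancels (using $c_p,c_q\notin P'$), giving
\[
(N(c_p)\cap P')\triangle(N(c_q)\cap P')=(L_p\cap P')\sqcup(L_q\cap P'),
\]
and the near-nested condition forces $\min(|L_p\cap P'|,|L_q\cap P'|)\le 2k-2$. Thus in each of the $T-1$ parts $P'\neq P$ at most one centre of $P$ is ``heavy'' (has more than $2k-2$ leaves in $P'$), so at most $T-1$ centres of $P$ are heavy somewhere; since $P$ contains at least $T+1$ centres, some centre $c\in P$ is light in every $P'$, yielding $K=\sum_{P'\neq P}|L_c\cap P'|\le (T-1)(2k-2)<K$, a contradiction. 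The main obstacle is spotting the uniform structural feature that lets a single pigeonhole argument cover all four of $G^1,\ldots,G^4$ at once; the rest of the argument is careful bookkeeping and the usual reduction to complements.
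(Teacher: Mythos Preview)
Your proof is correct, but the route differs from the paper's in the ``if'' direction. The paper dispatches all ten cases with one double-pigeonhole: for, say, $\X_1$ it takes $G=(T^2+1)K_{1,2kT}$ and assigns to each star a pair $(i,j)$ where $V_i$ contains the centre and $V_j$ contains at least $2k$ of its leaves; two stars sharing the same $(i,j)$ immediately yield a forbidden $2\Lambda_{2k-1}$ (or, when $i=j$, a part that is neither clique nor independent), and the paper then remarks that ``a similar argument'' handles $\X_2,\ldots,\X_{10}$.

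By contrast, you first observe that the class of $(T,k)$-graphs is closed under complementation (a clean reduction the paper does not make explicit), handle $\X_9$ by a direct vertex-capacity count, and treat $\X_1,\ldots,\X_4$ by a two-stage pigeonhole: first force $T+1$ centres into one part $P$, show their leaves must lie outside $P$, and then run a heavy/light analysis across the remaining $T-1$ parts. Your uniform structural observation that any two centres have identical neighbourhoods outside $\{c_p,c_q\}\cup L_p\cup L_q$ is what makes this work simultaneously for $G^1,\ldots,G^4$; the trade-off is that your witness graphs are smaller (leaf count $(T-1)(2k-2)+1$ versus $2kT$) but the argument is longer than the paper's one-line double-pigeonhole.
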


\begin{proof}
If the class $\X$ does not contain any of the classes $\X_1, \X_2, \ldots, \X_{10}$, then $\X$ must exclude at least one graph from each class. Hence it follows that there exist two integers $n,k$ such that $\X \subseteq Free(\F_{n,k})$ and hence by Theorem~\ref{main} there exist a number $T=T(n,k)$ such that every graph of $\X$ is a $(T,k)$-graph.

Suppose now, the class $\X$ contains $\X_1$ and suppose, for contradiction, that there exist $T=T(\X)$ and $k=k(\X)$ such that each graph in $\X$ is a $(T,k)$-graph. As $\X$ contains $\X_1$ if follows that $G=(T^2+1)K_{1,2kT} \in \X$. By assumption, $G$ is a $(T,k)$-graph, thus it can be partitioned into $T$ sets $V(G)=V_1 \cup V_2 \cup \ldots \cup V_T$ such that each set induces a clique or an independent set and between each pair of sets the induced bipartite graph is $(2\up_{2k-1}, 2\Lambda_{2k-1})$-free. For each star $K_{1,2kT}$ in $G$ we assign a pair of integers $(i,j)$ such that the centre of the star belongs to the bag $V_i$ and $V_j$ is a bag that contains at least $2k$ leaves. Note that such $j$ exists since by pigeonhole principle at least one bag contains $\frac{2kT}{T}=2k$ leaves and if there are several possible $j$'s we choose one arbitrarily. As there are $T^2+1$ different stars, two stars will get assigned to the same pair $(i,j)$. If $i=j$, then this implies that $V_i$ is neither a clique nor indendent set and if $i \neq j$, then this implies that the bipartite graph induced between $V_i$ and $V_j$ is not $(2\up_{2k-1},2\Lambda_{2k-1})$-free. A similar argument can be applied if $\X$ contains any of the classes $\X_1, \X_2, \ldots, \X_{10}$.         
\end{proof}

In the rest of this section we will prove that classes that exclude star-forests and related graphs or, equivalently, classes not containing $\X_1, \X_2, \ldots, \X_{10}$, have at most factorial speed of growth. Recall that the speed of a $\X$ class is the sequence $\X_n$, where $\X_n$ is the number of $n$-vertex graphs in $\X$ with vertex set $\{1,2,\ldots, n\}$. For instance, the speed of the class of all graphs is $2^{\binom{n}{2}}$ and the speed of the class of trees is $n^{n-2}$ as each (labelled) tree on $n$ vertices can be uniquely described by Pr\"{u}fer code consisting of a string of $n-2$ numbers ranging from $1$ to $n$. The factorial layer of growth identifies the classes that have speed $\log(X_n)=O(n \log(n))$. Thus factorial layer includes the class of trees and, in fact, many other important classes of graphs such as planar graphs, classes of bounded degree, classes of bounded clique-width, etc. While containing many classes of theoretical and practical importance, there is no easy decision procedure which tells for which sets of graphs $\F$, the class $\X=Free(\F)$ is of factorial speed of growth. We note that factorial layer is the smallest such layer, as the membership to constant, polynomial and exponential layers can be checked effectively as follows from the work of Alekseev~\cite{general1}. Our work provides a partial result to this question: if $\F$ contains a graph from each class $\X_1, \X_2, \ldots, \X_{10}$, then it is of at most factorial speed of growth. 

To prove that our classes have at most factorial speed of growth we will use the idea of locally bounded coverings introduced in \cite{lmz12}. The idea can be described as follows. 

Let $G$ be a graph. A set of graphs $H_1, \ldots, H_k$ is called a covering of $G$ if the union of $H_1, \ldots, H_k$ coincides with $G$, i.e. if $V(G)=\cup_{i=1}^k V(H_i)$ and $E(G)=\cup_{i=1}^{k} E(H_i)$. The following result was proved in \cite{lmz12}:

\begin{theorem} \label{factorialcovering}
Let $X$ be a class of graphs and $c$ a constant. If every graph $G \in X$ can be covered by graphs from a class $Y$ with $\log Y_n = O(n \log (n))$ in such a way that every vertex of $G$ is covered by at most $c$ graphs, then $\log X_n = O(n \log (n))$. 
\end{theorem}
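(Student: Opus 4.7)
The plan is to upper bound $X_n$ by counting, for each $G \in X$ on vertex set $[n] = \{1,\ldots,n\}$, the number of possible \emph{covering encodings}. Since any covering $H_1,\ldots,H_k$ determines $G$ via $E(G) = \bigcup_i E(H_i)$, bounding the number of encodings bounds $X_n$.

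For each $G \in X$, first fix once and for all a covering $H_1,\ldots,H_k$ by graphs from $Y$ such that every vertex of $G$ lies in at most $c$ of the $H_i$. Counting vertex-graph incidences gives $\sum_{i=1}^k |V(H_i)| \leq cn$, and in particular $k \leq cn$. Then encode $G$ by the following data: (i) the integer $k$; (ii) for each $v \in [n]$, the subset $S_v \subseteq \{1,\ldots,k\}$ of indices $i$ with $v \in V(H_i)$, which satisfies $|S_v| \leq c$ and collectively determines each vertex set $V(H_i) \subseteq [n]$ via $V(H_i) = \{v : i \in S_v\}$; (iii) for each $i \in \{1,\ldots,k\}$, the labelled graph $H_i \in Y$ on its prescribed vertex set $V(H_i)$.

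Now I count encodings. There are at most $cn$ choices for $k$; for each of the $n$ vertices, at most $\binom{k}{\leq c} \leq (c+1)(cn)^c$ choices for $S_v$, giving a factor of at most $((c+1)(cn)^c)^n$ for (ii); and for (iii), writing $n_i = |V(H_i)|$ and using that the number of graphs in $Y$ on a fixed labelled vertex set of size $n_i$ equals $Y_{n_i}$, the hypothesis $\log Y_m = O(m\log m)$ combined with $n_i \leq n$ and $\sum n_i \leq cn$ gives $\sum_{i=1}^k \log Y_{n_i} = O(\sum_i n_i \log n_i) = O(cn\log n)$. Taking logarithms and summing all three contributions yields $\log X_n = O(n\log n)$, as required.

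The argument has no real obstacles. The only subtlety is that for the encoding to be well-defined one must fix, for each $G$, a single valid covering; canonicity is unnecessary since we only require an injection from $X_n$ into the set of encodings. The content of the theorem lies in the fact that the bounded-multiplicity condition $|S_v| \leq c$ forces the total vertex count $\sum n_i$ to remain linear in $n$ — this is what keeps the resulting bound in the factorial layer rather than inflating it to something larger.
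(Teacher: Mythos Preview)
Your proof is correct. Note, however, that the paper does not actually give its own proof of this theorem: it is quoted from \cite{lmz12} and used as a black box, so there is no in-paper argument to compare against. Your encoding argument---record $k$, the incidence sets $S_v$, and the labelled $H_i$'s, then use $\sum_i n_i \le cn$ to bound $\sum_i \log Y_{n_i}$---is precisely the standard approach from that reference, so nothing is missing.
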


As each $(T, k)$-graph admits a covering by cliques, independent sets and bipartite $(2\up_{2k-1}, 2\Lambda_{2k-1})$-free graphs, it is enough to show that  bipartite $(2\up_{2k-1}, 2\Lambda_{2k-1})$-free graphs, or more generally  bipartite $2\Lambda_{s}$-free have at most factorial speed of growth. 

\begin{lemma} \label{factorial}
Let $s$ be a natural number and $\X$ be a class of bipartite $2\Lambda_s$-free graphs. Then $\X$ is at most factorial.
\end{lemma}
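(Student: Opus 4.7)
The plan is to apply Theorem~\ref{factorialcovering}, taking the factorial class $Y$ to be the class of bipartite chain graphs ($2K_2$-free bipartite graphs). Chain graphs form a well-known factorial class, with at most $2^n(n+1)^n$ labelled representatives on $n$ vertices. Thus it suffices to cover every $G \in \X$ by a bounded number (depending only on $s$) of chain graphs: then every vertex of $G$ lies in $O(1)$ pieces of the cover, and Theorem~\ref{factorialcovering} yields $\log|\X_n|=O(n\log n)$ at once.

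The key structural input is a simple consequence of $2\Lambda_s$-freeness: ordering $A=\{u_1,\ldots,u_a\}$ of $G=(A,B,\mathcal{E})$ by non-increasing degree, one has $|N(u_j)\setminus N(u_i)|<s$ for every $i<j$, since otherwise $2\Lambda_s$-freeness combined with $|N(u_i)|\ge|N(u_j)|$ would force $|N(u_j)|>|N(u_i)|$. My plan is to deduce from this that $E(G)$ decomposes into at most $s$ chain graphs, by induction on $s$: for $s=1$ the graph $G$ is itself a chain (since $2\Lambda_1=2K_2$), and for $s\ge 2$ one extracts a single chain subgraph $C\subseteq G$ for which $G\setminus E(C)$ is $2\Lambda_{s-1}$-free, and then finishes by induction. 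A natural candidate chain is the \emph{common-core} chain defined by $N_C(u_i)=\bigcap_{j\le i}N(u_j)$, which is manifestly nested; the edges of $G\setminus E(C)$ at $u_i$ then form the union, over $k<i$, of the small sets $N(u_i)\setminus N(u_k)$.

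The main obstacle will be to verify, or to repair, the claim that $G\setminus E(C)$ is $2\Lambda_{s-1}$-free. Small examples with $s\ge 3$ show that the naive greedy common-core chain can leave behind a remainder in which two later vertices have disjoint private neighbourhoods of size $s-1$, producing an induced $2\Lambda_{s-1}$, so the greedy choice is not always sufficient. The heart of the proof will therefore be to enlarge $C$ along \emph{critical} pairs of $A$-vertices, so that $C$ absorbs at least one edge from each near-extremal private neighbourhood and the resulting remainder has a one-sided private part strictly less than $s-1$ on every pair. If this chain-cover route proves too delicate, an alternative is a direct counting argument based on Sauer--Shelah: the $A$-side neighbourhood system of $G$ has VC-dimension at most $2s-1$ (since shattering $2s$ coordinates would realise an induced $2\Lambda_s$), giving at most $O(b^{2s-1})$ distinct neighbourhoods, and a careful recursive encoding of this system of distinct neighbourhoods, together with the per-vertex assignment $A \to \mathcal{N}$, should yield the required $O(n\log n)$-bit description of $G$.
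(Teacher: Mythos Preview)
Your proposal isolates the right structural fact --- ordering $A$ by degree forces $|N(u_j)\setminus N(u_i)|<s$ whenever $i<j$ --- but neither of the two routes you sketch is actually completed. For the chain-cover approach you yourself note that the common-core chain can leave a $2\Lambda_{s-1}$ in the remainder, and the proposed repair (``enlarge $C$ along critical pairs'') is a hope rather than an argument; it is not clear that an edge-decomposition into $s$ chain graphs always exists. For the Sauer--Shelah fallback, bounding the \emph{number} of distinct $A$-neighbourhoods by $O(|B|^{2s-1})$ is correct, but that alone does not yield an $O(n\log n)$-bit description of $G$: you must still record what those neighbourhoods \emph{are}, and doing so naively costs $|B|\cdot O(|B|^{2s-1})$ bits. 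Converting bounded VC-dimension into a short encoding is a real piece of work, not a throwaway line.

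The paper avoids both detours with a direct encoding built on exactly your inequality. List $A=\{a_1,\ldots,a_q\}$ in \emph{non-decreasing} degree order and record the sequence
\[
a_1,\ N(a_1),\ a_2,\ N(a_2)\triangle N(a_1),\ \ldots,\ a_q,\ N(a_q)\triangle N(a_{q-1}).
\]
From $|N(a_{i-1})\setminus N(a_i)|<s$ one obtains $|N(a_i)\triangle N(a_{i-1})|\le \bigl(|N(a_i)|-|N(a_{i-1})|\bigr)+2s-2$, and summing over $i$ telescopes to a total of at most $|N(a_q)|+(2s-1)q\le 2sn$ vertex labels. That is the whole argument: no chain decomposition, no VC machinery, just a telescoping bound on consecutive symmetric differences. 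You had the crucial inequality in hand; the paper simply applies it to consecutive pairs in a linear encoding rather than trying to peel off a global chain subgraph.
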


\begin{proof}
Let $G=(A, B, \mathcal{E}) \in \X$ be a bipartite graph on $n$ vertices not containing $\Lambda_s$. 
Let $A=\{a_1, a_2, \ldots, a_q\}$, such that 
$|N(a_1)| \leq |N(a_2)| \leq \ldots \leq |N(a_q)|$.
Then we can encode the graph $G$ as a sequence
$$[a_1, N(a_1), a_2, N(a_2)\triangle N(a_1), \ldots, a_q, N(a_q) \triangle N(a_{q-1})]$$ 
(Here $\triangle$ denotes the symmetric difference between the sets, i.e. for two sets $V_1, V_2$, we write $V_1 \triangle V_2 = (V_1 \backslash V_2) \cup (V_2 \backslash V_1)$.)
 
First of all, let us see that this sequence describes the graph. 
Indeed, we can prove by induction that one can obtain $N(a_i)$ for any $i$.
For $i=1$, this is already given. Suppose $i>1$ and we have calculated the neighbourhood $N(a_{i-1})$. Then we can read-off the set 
$N(a_i) \triangle N(a_{i-1})$ from the sequence, and we know $N(a_{i-1})$ by induction, so we can calculate $N(a_i)=N(a_i) \triangle (N(a_i) \triangle N(a_{i-1}))$.
Hence, we can recover the graph from the sequence.

We will now show that this sequence contains at most $(2s+2)n$ vertices. To do this we will estimate the size symmetric difference
$N(a_i) \triangle N(a_{i-1})=N(a_i) \backslash N(a_{i-1}) \cup N(a_{i-1}) \backslash N(a_{i})$. Notice that as $|N(a_{i})| \geq |N(a_{i-1})|$ it follows that $|N(a_i) \backslash N(a_{i-1})| \geq |N(a_{i-1}) \backslash N(a_{i})|$. Our first aim is to provide a bound to the smaller part $|N(a_{i-1}) \backslash N(a_{i})|$.  We claim that $|N(a_{i-1})\backslash N(a_{i})|<s$. Suppose, for contradiction that $|N(a_{i-1})\backslash N(a_{i})| \geq s$. Then, $|N(a_{i}) \backslash N(a_{i-1})| \geq s$ as well. So we can pick two subsets $B_i \subseteq N(a_i) \backslash N(a_{i-1})$ and 
$B_{i-1} \subseteq N(a_{i-1}) \backslash N(a_i)$ of size $|B_i|=|B_{i-1}|=s$. But then $G[\{a_i\} \cup B_i \cup \{a_{i-1}\} \cup B_{i-1}]$ induce $2\Lambda_s$, a contradiction. Hence, we obtain that $|N(a_{i-1}) \backslash N(a_{i})| < s$. Now we estimate the size of larger part $N(a_i) \backslash N(a_{i-1})$ as follows. 
\begin{align*}
|N(a_i) \backslash N(a_{i-1})| &= |N(a_i)|-|N(a_i) \cap N(a_{i-1})| \\
&=|N(a_i)|-|N(a_{i-1})|+|N(a_{i-1})|- |N(a_i) \cap N(a_{i-1})| \\
&=|N(a_i)|-|N(a_{i-1})|+|N(a_{i-1}) \backslash N(a_{i})| \\
& < |N(a_i)|-|N(a_{i-1})| + s.
\end{align*} 
Adding the two estimates together we obtain $|N(a_i) \triangle N(a_{i-1})| \leq |N(a_i)| - |N(a_{i-1})| + 2s-2$. Now we can provide a bound on the number of vertices in the sequence as follows:
\begin{align*}
|N(a_1)|+\sum_{i=2}^q |N(a_i) \triangle N(a_{i-1})| + q
&\leq |N(a_1)| + \sum_{i=2}^q (|N(a_i)|-|N(a_{i-1})| + 2s-2) + q\\ 
&= |N(a_q)| + (2s-2)(q-1) + q\\
&\leq n + (2s-2)n + n \\
&= 2sn.
\end{align*}

Thus we can encode each graph in the class by a sequence consisting of $2sn$ labels of vertices (numbers from $1$ to $n$) each followed by a comma or space symbol (with commas as presented above). As there are at most $(2n)^{2sn}$ such codes, we have at most $(2n)^{2sn}$ graphs on $n$ vertices in the class. Hence the class is factorial. 
\end{proof}

Adding the results of Lemma~\ref{factorial} and Theorem~\ref{thm:stars} together we conclude this section with the following lemma.

\begin{lemma}\label{classes3}
Let $\X$ be a class of graphs for which there exist $T=T(\X)$ and $k=k(\X)$ such that every graph in $\X$ is a $(T,k)$-graph. Then $\X$ has at most factorial speed of growth. 
\end{lemma}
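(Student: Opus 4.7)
The plan is to apply Theorem~\ref{factorialcovering} to the natural covering of each $(T,k)$-graph by its part-induced subgraphs and its between-part bipartite subgraphs, reducing the whole problem to Lemma~\ref{factorial}. I would fix a graph $G \in \X$ on $n$ vertices and a $(T,k)$-partition $V(G) = V_1 \cup \ldots \cup V_t$ with $t \leq T$. From this I form the covering family consisting of the $t$ graphs $G[V_i]$ (each a clique or an independent set) together with the $\binom{t}{2}$ bipartite graphs $G[V_i, V_j]$ for $i < j$, each of which, under the bipartition $(V_i, V_j)$, is $(2\Lambda_{2k-1}, 2\up_{2k-1})$-free. Every edge of $G$ lies in exactly one covering graph, and a vertex $v \in V_i$ belongs to $G[V_i]$ together with the $t-1 \leq T-1$ bipartite pieces involving $V_i$, so every vertex is covered at most $T$ times.

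Next, I would define an ambient class $\Y$ of labeled graphs containing all these covering pieces and verify it is factorial. Take $\Y$ to consist of all graphs that are either a clique, an independent set, or admit a bipartition $(A,B)$ under which, as a bipartite graph, they are $2\Lambda_{2k-1}$-free. The clique and independent-set families contribute $O(1)$ labeled graphs per vertex count. For the bipartite family, Lemma~\ref{factorial} bounds, for each fixed bipartition $(A,B)$ with $|A|+|B|=n$, the number of $2\Lambda_{2k-1}$-free labeled bipartite graphs by $(2n)^{2(2k-1)n}$; summing over the at most $2^n$ choices of bipartition still yields $\log \Y_n = O(n \log n)$. Since every covering piece of $G$ lies in $\Y$ (the bipartite pieces in fact satisfy the stronger two-sided forbidding condition), Theorem~\ref{factorialcovering} applies with constant $c = T$, which gives $\log \X_n = O(n \log n)$.

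The only mild subtlety is that Lemma~\ref{factorial} counts bipartite graphs with a specified bipartition, whereas the covering theorem treats covering pieces as ordinary labeled graphs; this is absorbed by the crude $2^n$-overcount across all possible bipartitions, which adds only a linear term in $n$ to the logarithm and therefore stays inside the factorial layer. Apart from this bookkeeping the proof is immediate from the two prior results we are allowed to invoke, so the main content of the lemma is in those inputs (particularly the encoding argument of Lemma~\ref{factorial}) rather than in the deduction itself.
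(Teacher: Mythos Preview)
Your proposal is correct and follows essentially the same route as the paper: cover each $(T,k)$-graph by the part-induced subgraphs $G[V_i]$ together with the between-part bipartite graphs $G[V_i,V_j]$, observe that each vertex lies in at most $T$ covering pieces, and invoke Theorem~\ref{factorialcovering} using Lemma~\ref{factorial} to certify that the ambient class of covering pieces is factorial. The only difference is that you are slightly more explicit than the paper about absorbing the unspecified bipartition via a $2^n$ overcount, which is harmless bookkeeping.
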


\begin{proof}
Let $\Y'$ be a class of bipartite $(\up_{2k-1},\Lambda_{2k-1})$-free graphs and let $\Y''$ be a class of cliques and independent sets. Then by Lemma~\ref{factorial} it follows that the class $\Y'$ has at most factorial speed of growth. As $\Y''$ has only two graphs on $n$ vertices, it is clear that $\Y=\Y' \cup \Y''$ has at most factorial speed of growth. Now, take any $G \in \X$. As $G \in X$ is a $(T,k)$-graph there is a partition $V(G)=V_1 \cup V_2 \cup \ldots \cup V_T$ with vertex sets inducing cliques or independent sets and between the sets the induced bipartite graphs are $(\up_{2k-1},\Lambda_{2k-1})$-free. Thus, we produce a cover of the graph $G$ by graphs $G[V_i] \in \Y$ for all $i$ and bipartite graphs $G[V_i, V_j, \mathcal{E}(G) \cap V_i \times V_j] \in \Y$. Notice that each vertex is covered by $T$ graphs. Thus Theorem~\ref{factorialcovering} applies and we get that $\X$ has at most factorial speed of growth. 
\end{proof}

\section{Concluding remarks and open questions} \label{section:conclusion}

The results presented here provide us with the structural characterisation of new classes from factorial layer. One may also notice that Theorem~\ref{classes} implies that every class of superfactorial growth must contain one of the classes $\X_1, \X_2, \ldots, \X_{10}$. A further extension of our understanding of factorial (and in turn superfactorial) layer, could be made by looking at classes $\X$ for which there is some $T=T(\X)$ such that all prime graphs in $\X$ are $(T,1)$-graphs. By a result on counting prime graphs in \cite{implicit}  it follows that all such classes have at most factorial speed of growth. Hence the aim would be to identify the minimal classes of graphs for which the prime graphs do not admit such partition. These classes would be interesting as all would contain arbitrarily large prime graphs (unlike $\X_1, \X_2, \ldots, \X_{10}$) and every superfactorial class would contain at least one of them. We note that such a characterisation of permutation classes whose simple permutations are monotone griddable (i.e. belongs to the permutation analogue of $(T,1)$-graphs) has been recently obtained in \cite{primepermutations}.           

The results presented here also reveal a potential for precise structural results for graph classes forbidding small subgraphs. It would be interesting to reveal the exact bounds on the number of parts needed in our partitions of bipartite and general graphs and hence precise structural results for a number of classes whose list of forbidden induced subgraphs include a star forest and related graphs.  

\section*{Acknowledgements}
The author would like to thank Robert Brignall for his encouragement and helpful comments on earlier drafts of this paper.

%One of the open questions concern well-quasi-ordering. 
%The procedure of deciding well-quasi-ordering was developed for 
%the griddable classes of permutations. 
%Can someone extend these results to the classes not containing matchings and their complements?

%Similarly, the procedure of deciding well-quasi-ordering was developed for classes with finite distinguishing number.
%Can someone extend these results for classes without stars?

\end{document}